\numberwithin{equation}{section}
\theoremstyle{definition}
\newtheorem{definition}{Definition}[section]
\theoremstyle{definition}
\newtheorem{remark}[definition]{Remark}
\theoremstyle{plain}
\newtheorem{theorem}[definition]{Theorem}
\newtheorem{lemma}[definition]{Lemma}
\newtheorem{Prop}[definition]{Proposition}
\newcommand{\beas}{\begin{eqnarray*}}
\newcommand{\eeas}{\end{eqnarray*}}
\newcommand{\bes} {\begin{equation*}}
\newcommand{\ees} {\end{equation*}}
\newcommand{\be} {\begin{equation}}
\newcommand{\ee} {\end{equation}}
\newcommand{\bea} {\begin{eqnarray}}
\newcommand{\eea} {\end{eqnarray}}
\newcommand{\zt}{\zeta}
\newcommand{\om}{\omega}
\newcommand{\bphi}{\boldsymbol{\varphi}}
\newcommand{\dbar}{\overline\partial}
\newcommand{\Om}{\Omega}
\newcommand{\D}{\mathbb{D}}
\newcommand{\rea}{\operatorname{Re}}
\newcommand{\supp}{\operatorname{supp}}
\newcommand{\Ev}{\operatorname{Ev}}
\newcommand{\A}{\mathcal{A}}
\newcommand{\E}{\mathcal{E}}
\newcommand{\Ha}{\mathbb{H}}
\newcommand{\h}{\mathfrak{H}}
\newcommand{\bk}{\mathbf k}
\newcommand{\T}{\mathbb{T}}
\newcommand{\zbar}{\overline z}
\newcommand{\wbar}{\overline w}
\newcommand{\wt}{\widetilde}
\newcommand{\cont}{\mathcal{C}}
\newcommand{\hol}{\mathcal{O}}
\newcommand{\cH}{\mathcal{H}}
\newcommand{\CC}{\mathbb{C}^2}
\newcommand{\Cn}{\mathbb{C}^n}
\newcommand{\C} {\mathbb{C}} 
\newcommand{\Z} {\mathbb{Z}}
\newcommand{\N} {\mathbb{N}}
\begin{document}

\title{Hardy Spaces for a class of singular domains}
\author{A.-K. Gallagher}
\address{Gallagher Tool \& Instrument LLC, Redmond, WA}
\email{anne.g@gallagherti.com}
\author{P. Gupta}
\address{Department of Mathematics, Indian Institute of Science, Bangalore}
\email{purvigupta@iisc.ac.in}
\author{L. Lanzani}
\address{Department of Mathematics, Syracuse University, Syracuse, NY}
\email{llanzani@syr.edu}
\author{L. Vivas}
\address{Department of Mathematics, The Ohio State University, Columbus, OH}
\email{vivas@math.osu.edu}
\thanks{L. Lanzani and L. Vivas  were supported in part by the National Science Foundation (DMS-1901978 and DMS-1800777). P. Gupta was supported in part by a UGC CAS-II grant (Grant No. F.510/25/CAS-II/
2018(SAP-I)). Part of this work took place at the Banff International Research station during a workshop of the {\em Women in Analysis} (WoAn), an AWM Research Network.
We are grateful to the Institute for its kind hospitality and to the Association of Women in Mathematics for its generous support. We also wish to thank Mei-Chi Shaw for providing the inspiration for this work, Bj{\"o}rn Gustafsson for offering helpful feedback on an earlier version of this manuscript, and the anonymous referee for their useful comments.}

\begin{abstract}
We set a framework for the study of Hardy spaces inherited by complements of analytic hypersurfaces in domains with a prior
Hardy space structure. The inherited structure is a filtration, various aspects of which are studied in specific settings. For punctured planar domains, we prove a generalization of a famous rigidity lemma of 
Kerzman and Stein. A stabilization phenomenon is observed for egg domains. Finally, using proper holomorphic maps, we derive a filtration of Hardy spaces for certain power-generalized Hartogs 
triangles, although these domains fall outside the scope of the original framework.
\end{abstract}

%\keywords{Hardy space of holomorphic functions; reproducing kernel Hilbert space; Szeg\H o kernel; analytic hypersurface; Hartogs triangle; pseudoconvex domain.}
%\subjclass{30H10; 42B30; 46E22; 32A25; 32Q02; 32T05}
\maketitle

\section{Introduction}\label{S:Intro}

In this paper, we construct Hardy spaces for a class of domains, which includes the punctured unit disk $\D^*=\D\setminus\{0\}$ and the product domain $\D\times\D^*$ as particularly simple, but 
enlightening, examples. Although our class of domains is not biholomorphically invariant, it is possible to push the construction forward under certain biholomorphisms. This allows us to construct Hardy spaces 
for the Hartogs triangle, $\Ha=\{(z_1,z_2)\in\C^2:|z_1|<|z_2|<1\}$, and compute the relevant Szeg{\H o} kernels. In fact, this was the original motivation for this work. The Hartogs triangle is of classical 
importance in several complex variables, see \cite{Sh15}, and serves as an important example of a singular domain since its boundary fails to be even locally graph-like at one point. While $\Ha$ and its 
generalizations have received a lot of attention from the point of view of the $\overline\partial$-problem, e.g., \cite{ChCh91, MaMi91,ChSh13,LaSh19,ChMc20}, and Bergman spaces, e.g., 
\cite{ChZe16,Ed16,EdMc16,EdMc17,ChEdMc19,HuWi20,NaPr20}, Hardy spaces for $\Ha$ were considered for the first time only recently by Monguzzi in \cite{Mo19}. Independently of Monguzzi, we had 
constructed a different Hardy space for the Hartogs triangle, and this discrepancy led us to recognize the central phenomenon of this paper. Before we describe this phenomenon, we clarify the main terminology 
used in this work. 

Since there is no unified notion of a Hardy space in the literature, we state here our minimum criteria for using this term. A Hilbert space of functions $\mathfrak H$ on the boundary of a domain is deemed a 
Hardy space {\em only if} there is a reproducing kernel Hilbert space (in the sense of Aronszajn in \cite{A50}) $\mathfrak X$ of holomorphic functions on the domain such that 
\begin{itemize}
	\item [$(a)$] functions in a dense subspace $\A\subset\mathfrak X$ admit boundary values in $\mathfrak H$, and
	\item [$(b)$] this identification of $\A$ with a subspace of $\mathfrak H$ is an isometry that extends to an isometric isomorphism between $\mathfrak X$ and $\mathfrak H$.
	\end{itemize}
We note that in all the explicit examples in this paper, $\mathfrak X$ is directly defined in terms of an exhaustion procedure on the domain, see Sections \ref{sec_examples}, \ref{S:Stabilization}, and 
\ref{S:Hartogs}. However, our general setting is not conducive to this process, and $\mathfrak X$ is only abstractly defined, for more details, see \eqref{E:corresp} and subsequent paragraphs.

To describe the class of domains under consideration, we start with a domain $\Om\Subset\Cn$ and a Borel measure $\nu$ supported on its boundary, $b\Om$, that admits a Hardy space structure. This 
structure is then inherited by domains that are obtained from $\Omega$  by  removing analytic hypersurfaces that are component-wise minimally defined, see Definition~\ref{D:vardeldom}. We refer to any such 
domain as a `hypersurface-deleted domain', and denote it by $\Om^*$. We call this process the `inheritance scheme', and the pair 
$(\Om, \nu)$ the `parent space'. 

In a notable departure from the classical theory, it turns out that under appropriate assumptions on the parent space, 
any hypersurface-deleted domain is associated to a filtration
of Hardy spaces, as opposed to a single such space. This is due to the fact that functions
 holomorphic on $\Om^*$ can be singular along the deleted hypersurface, but all orders of singularities cannot be captured in a single Hardy space, see the discussion at the beginning of Subsection~\ref{sec_D*}.
We demonstrate via explicit examples that this filtration may or may not stabilize, depending on the choice of $\nu$ and the deleted hypersurface.  

\subsection{Function-theoretic context} In \cite{Po13}, Poletsky and Stessin give a construction of Hardy spaces for hyperconvex domains in $\Cn$. We note that, while $\Om^*$ is pseudoconvex whenever $
\Om$ is, it is never hyperconvex. Our construction therefore covers a new class of domains. 

Note that this class of domains is however uninteresting from the point of view of Bergman space theory, since the Bergman space for $\Om^*$ equals the Bergman space for $\Om$, see \cite[Proposition 1.14]
{Oh02}. Additionally, our approach does not lead to meaningful Hardy spaces of {\em harmonic} functions because $b\Om$ is not, in general, a uniqueness set for harmonic functions on $\Om^*$. For instance, 
if $\Om^*=\D\setminus\{0\}$, then $\rea z$ and $\rea \frac{1}{z}$ are both harmonic on $\D^*$ but coincide on $b\D$. 

\subsection{Boundary-based approach to Hardy spaces}
 The lack of a general exhaustion procedure to construct $\mathfrak X$ shifts the burden of the construction to the dense subspace $\A$. In the classical 
setting of the unit disk, $\A$ is the disk algebra, i.e., the space of holomorphic functions on $\D$ that are continuous up to the boundary. If we extend this definition verbatim to the punctured disk, since $
\overline{\D^*}=\overline\D$, it would lead to the same Hardy space, which does not capture a significant class of holomorphic functions on $\D^*$. Our construction overcomes this issue. When $(\Om,\nu)$ 
is the parent space, we consider $\A$ to be
	\bes
		\A(\Om,\nu):=\hol(\Om)\cap\cont(\Om\cup\operatorname{supp}\nu).
	\ees 
Moreover, for $(\Om^*,\nu)$, we work with {\em subspaces} of 
				$\hol(\Om^*)\cap\mathcal{C}(\Om^*\cup\operatorname{supp}\nu )$
	which have prescribed singularity along the deleted hypersurface. Under appropriate assumptions (see Definition~\ref{D:Hardyspace2} for details), the $L^2(\nu)$-completion of $\A|_{\operatorname{supp}
	\nu}$ is a reproducing kernel Hilbert space on the domain in consideration. Hence, we call it a Hardy space and refer to its reproducing kernels as a Szeg{\H o} kernels.

	 We point out that there may be kernel functions $c(z, \cdot)$ that have the {\em reproducing property for $\A$}, namely, for all $z$ in the domain
	\be\label{E:repprop}
		F(z)=\int_{\operatorname{supp}\nu}F(w)\cdot c(z,w)\: d\nu(w)\quad \forall F\in\A, 
	\ee
but are not the Szeg\H o kernel for the associated Hardy space. For instance, this is the case for the Cauchy kernel of any smoothly bounded planar domain $\Om\neq \mathbb D$. Our boundary-based approach is particularly 
suited to obtaining such boundary integral representation formulas.
		  
\subsection{Description of results}  We first state conditions on the parent space $(\Om, \nu)$ that lead to a Hardy space for $\Om$, see Definitions~\ref{D:weaklyadmissible} and ~\ref{D:stronglyadmissible}. 
Then we provide the inheritance scheme that gives a filtration of Hardy spaces for  $\Om^*$, see Theorem~\ref{T:admissibleheir}. For each level of the filtration, we produce new kernels that have the 
reproducing property \eqref{E:repprop}. Moreover, we give a sufficient condition for these kernels to agree with the Szeg\H o kernels, see Proposition \ref{P:kernel}. We then proceed to analyse the framework 
via some examples.  

In Theorem \ref{T:rigidity}, we consider simply connected planar domains with finitely many points removed. For this class of domains, we formulate and extend a famous rigidity lemma of Kerzman and Stein 
\cite{KeSt78}, i.e., if $\Om\Subset \mathbb C$ is simply connected then the Cauchy kernel on $\Om$ coincides with the Szeg\H o kernel for $\Om$ if and only if $\Om$ is a disc.   
We next identify a family of domains for which the filtration of Hardy spaces stabilizes. These are egg domains, sometimes known as complex ellipsoids, in $\mathbb C^2$ from which a single hyperplane has been deleted, and we observe that the stabilization occurs at different levels depending on the choice of boundary measure, see Theorem \ref{thm_eggs}. Finally, we use proper holomorphic maps to transfer the filtered Hardy space structure on 
$\D\times \D^*$ to a class of non hypersurface-deleted domains, i.e., the Hartogs triangle and its rational power generalizations that were first introduced in \cite{Ed16, EdMc16}. We also produce explicitly the Szeg\H o kernels for these domains in Theorems 
\ref{T:Hartogs} and \ref{T:pgHartogs}.

\subsection{Structure of this paper.} In Section \ref{sec_examples}, we consider the punctured disk as this exemplifies the general construction of the filtration of Hardy spaces.
In Section \ref{S:varietydeleteddomains}, we provide the general framework and 
prove the main inheritance results.  Section \ref{S:planar} is specialized to the setting of planar domains, for which more explicit formulas can be proved by means of conformal mapping, along with the 
aforementioned rigidity result. The egg domains are dealt with in Section \ref{S:Stabilization}, and $\D\times\D^*$, the Hartogs triangle and its  rational power generalizations are treated in Section \ref{S:Hartogs}.

\section{Motivating example}\label{sec_examples}

We consider the open unit disk $\mathbb{D}$ and the arc-length measure $\sigma_{S^1}$ on $b\D$ as the parent space, and the punctured disk $\mathbb{D}^*:=\mathbb{D}\setminus\{0\}$ as the 
hypersurface-deleted domain. Using the basic descriptions for the $L^2$-Hardy space for the disk detailed in Subsection \ref{sec_D}, we derive a filtration of Hardy spaces for $(\mathbb{D}^*,\sigma_{S^1})$ in 
Subsection \ref{sec_D*}.  Throughout this section, we omit $\sigma_{S^1}$ from the notation for the relevant function spaces.

\subsection{Hardy Space for the unit disk}\label{sec_D}
The classical $L^2$-Hardy space $\mathcal{H}^2(\mathbb{D})$ is the space of holomorphic functions on $\D$ that are finite in the norm given by
  $$
		\|F\|_{\mathcal{H}^2(\mathbb{D})}
  :=\sup_{0<r<1}\left(\frac{1}{2\pi}\int_{0}^{2\pi} |F(re^{i\theta})|^2\;d\theta\right)^{\frac{1}{2}}.
 $$
Note that for any $F\in\mathcal{H}^2(\mathbb{D})$ with $F(z)=\sum_{j=0}^\infty a_j z^j$, it follows that 
$$\|F||_{\mathcal{H}^2(\mathbb{D})}=\Bigl(\sum_{j=0}^\infty|a_j|^2 \Bigr)^{\frac{1}{2}}<\infty.$$
 This characterization facilitates the identification of $\mathcal{H}^2(\mathbb{D})$ as a reproducing kernel Hilbert space, by way of considering the inner product 
	\bes
		\left<F,G\right>:=\lim_{r\rightarrow 1}\frac{1}{2\pi}\int_{0}^{2\pi} F(re^{i\theta})\,\overline {G(re^{i\theta})}\;d\theta
			\quad \quad \text{for}\ F,G\in \mathcal{H}^2(\mathbb{D}),
	\ees
and the evaluation operators $F\mapsto F(z)$ for  $z\in\mathbb{D}$
 and $F\in\mathcal{H}^2(\mathbb{D})$. Moreover, a truncation of power series argument gives that the disk algebra
$\mathcal{A}(\mathbb{D})=\mathcal{O}(\mathbb{D})\cap\mathcal{C}(\overline{\mathbb{D}})$ is  a dense subspace of $\mathcal{H}^2(\mathbb{D})$.
Next, the restriction to the boundary map from $\mathcal{A}(\mathbb{D})\subset\mathcal{H}^2(\mathbb{D})$ to $\mathcal{A}(\mathbb{D})|_{b\mathbb{D}}\subset L^2(b\mathbb{D})$ extends to an isometric isomorphism, up to a multiplicative constant, 
\begin{align*}
 \Phi: \mathcal{H}^2(\mathbb{D}) &\longrightarrow \overline{\mathcal{A}(\mathbb{D})|_{b\mathbb{D}}}^{L^2(b\mathbb{D})} \\
 F(z)=\sum_{j=0}^\infty a_j z^j&\mapsto \Phi(F)(e^{i\theta})=\sum_{j=0}^\infty a_j e^{ij\theta},
 \end{align*}
 where $\sum_{j=0}^\infty a_j e^{ij\theta}$ is the representation of $\Phi(F)$ as its Fourier series.
  We call  the closure of $\mathcal{A}(\mathbb{D})|_{b\mathbb{D}}$ in $L^2(b\mathbb{D})$ the Hardy space $\h^2(\mathbb{D})$ for $(\mathbb{D},\sigma_{S^1})$. Note that if we set $\mathfrak X$ as $(\cH^2(\D),\frac{1}{\sqrt{2\pi}}||.||_{\cH^2(\D)})$ and $\A$ as $\A(\D)$, then $\h=\h^2(\D)$ satisfies the minimum criterion of a Hardy space stated in the introduction.  

The Szeg{\H o} kernel $s$ for $\h^2(\mathbb{D})$ may now be derived from the Cauchy integral formula for $F\in\A(\D)$, which says that
$$F(z)
 =\frac{1}{2\pi i}\int_{b\mathbb{D}}\frac{F(w)}{w-z}\,dw
=\frac{1}{2\pi}\int_{b\mathbb{D}}\frac{F(w)}{1-z\overline{w}}\,d\sigma_{S^1}(w).$$ 
Since $s $ is uniquely determined by such a reproducing property and the fact that $\overline{s(z,.)}\in\h^2(\mathbb{D})$ for $z\in\mathbb{D}$, see Proposition~\ref{P:Riesz-bd}, we have that  
 $$ s(z,w)=\frac{1}{2\pi}\frac{1}{1-z\overline{w}}\;\;\text{ for } z\in\mathbb{D}, w\in b\mathbb{D}.$$

%%%%%%%%%%%%%%%%%%%%%%%%%%%%%%%%%%%%%%%%%%%%%%%%%%
%     Punctured disc
% %% %%%%%%%%%%%%%%%%%%%%%%%%%%%%%%%%%%%%%%%%%%%%%%%
 \subsection{Hardy spaces on the punctured disk}\label{sec_D*}
 
 In an attempt to develop a Hardy space theory for the punctured disk, one might first consider $\mathcal{O}(\mathbb{D}^*)\cap\mathcal{C}(\overline{\mathbb{D}^*})$. However, 
 $\overline{\mathbb{D}^*}=\overline{\mathbb{D}}$, so this approach would only lead to the rediscovery of the Hardy space on the unit disk.  One might also try
 to construct a Hardy space for $\mathbb{D}^*$ by considering the closure of  $\bigl(\mathcal{O}(\mathbb{D}^*)\cap\mathcal{C}(\mathbb{D}^*\cup b\mathbb{D})\bigr)_{|_{b\mathbb{D}}}$ with respect to
 $L^2(b\mathbb{D})$. This fails, too, as pointwise evaluation on this class of $L^2(b\mathbb{D})$-functions is not bounded for any point in $\D^*$. To wit, consider the functions
$$F_k(z):=\sum_{j=1}^k\frac{1}{jz^{j}},\;\;k\in\mathbb{N}.$$
Clearly, $F_k\in\mathcal{O}(\mathbb{D}^*)\cap\mathcal{C}(\mathbb{D}^*\cup b\mathbb{D})$ , while
$$\|(F_k)_{|_{b\mathbb{D}}}\|_{L^2(b\mathbb{D})}\leq\sqrt{2\pi}\Bigl(\sum_{j=1}^\infty j^{-2}\Bigr)^{\frac{1}{2}}<\infty\;\;\forall k\in\mathbb{N}.$$ 
Since $F_k(z)$ diverges as $k\to\infty$ for any $z\in\mathbb{D}^*$, it follows that the pointwise evaluation operator is not a bounded operator on 
$\bigl(\bigl(\mathcal{O}(\mathbb{D}^*)\cap\mathcal{C}(\mathbb{D}^*\cup b\mathbb{D})\bigr)_{|_{b\mathbb{D}}},\|.\|_{L^2(b\mathbb{D})}\bigr) $ for any point in $\mathbb{D}^*$.
This failure stems from allowing holomorphic functions on $\mathbb{D}^*$ with essential singularities at the origin.
Thus, we allow poles of prescribed order at the origin, that is, for $k\in\mathbb{N}_{0}$, consider the following subset of 
$\mathcal{O}(\mathbb{D}^*)\cap\mathcal{C}(\mathbb{D}^*\cup b\mathbb{D})$
\begin{align}\label{E:AkD*}
  \mathcal{A}_k(\mathbb{D}^*)=\left\{F:\mathbb{D}^*\cup b\mathbb{D}\longrightarrow\mathbb{C}:%\in\mathcal{O}(\mathbb{D}^*)\cap\cont(\D^*\cup b\D)
  F(z)=\left(z^{-k}G(z)\right)|_{\mathbb{D}^*} \text{ for some } G
  \in\mathcal{A}(\mathbb{D}) \right\}.
\end{align}
For each $k\in\mathbb{N}_{0}$ define $\h^2_k(\mathbb{D}^*)$ to be the closure of $\mathcal{A}_k(\mathbb{D}^*)|_{b\D}$ with respect to $L^2(b\mathbb{D})$. It immediately follows from $z|_{b\mathbb{D}}\neq 0$ that
\begin{align*}
  \h_k^2(\mathbb{D}^*)=\left\{f\in L^2(b\mathbb{D}): f=z^{-k}g\text{ for some }g\in\h^2(\mathbb{D}) \right\}.
\end{align*}
In particular, any function $f\in\h_k^2(\mathbb{D}^*)$ is represented by its Fourier series  $\sum_{j=-k}^\infty \hat{f}_j e^{ij\theta}$ where $\sum_{j=-k}^\infty|\hat{f}_j|^2<\infty$. 
Note that $\h^2_0(\mathbb{D}^*)=\h^2(\mathbb{D})$, $\h_k^2(\mathbb{D}^*)\subsetneq\h_{k+1}^2(\mathbb{D}^*)$ for any $k\in\mathbb{N}_0$, and
$\bigcup_{k=0}^\infty\h^2_k(\mathbb{D}^*)$ is dense in $L^2(b\mathbb{D})$.

We can also derive the Szeg{\H o} kernel $s_k$ for $\h^2_k(\mathbb{D}^*)$ directly from the Szeg{\H o} kernel $s$ for $\h^2(\mathbb{D})$. That is,
for $F\in\mathcal{A}_k(\mathbb{D}^*)$ given, let $G\in\mathcal{A}(\mathbb{D})$ such that $F(z)=z^{-k}G(z)$ for $z\in\mathbb{D}^*$. 
Then for $z\in\mathbb{D}^*$, we get
\begin{align*}
  z^kF(z)=G(z)=\int_{b\mathbb{D}}G(w)s(z,w)\;d\sigma_{S^1}(w)
  =\int_{b\mathbb{D}}w^kF(w)s(z,w)\;d\sigma_{S^1}(w).
\end{align*} 
Thus, the kernel  given by
\begin{align*}
  s_k(z,w)=\frac{w^k}{z^k}s(z,w)=\frac{1}{2\pi}\frac{w^k}{z^k(1-z\overline{w})}=\frac{1}{2\pi}\frac{1}{(z\overline{w})^k(1-z\overline{w})}
\end{align*}
exhibits the reproducing property for $\mathfrak{H}^2_k(\mathbb{D}^*)$, and  $\overline{s_k(z,.)}\in\mathfrak{H}^2_k(\mathbb{D}^*)$
for all $z\in\mathbb{D}^*$.
Hence $s_k $ is the Szeg{\H o} kernel for $\mathfrak{H}^2_k(\mathbb{D}^*)$.

Lastly, we remark that $\h^2_k(\mathbb{D}^*)$ satisfies the minimum criteria, laid out in Section~\ref{S:Intro}, for a space $\h$ to be called a reproducing kernel Hilbert space. Here $\mathcal{A}$ corresponds to $\mathcal{A}_k(\mathbb{D}^*)$, while $\mathfrak{X}$ is the space $\mathcal{H}^2_k(\mathbb{D}^*)$ consisting of $F\in\mathcal{O}(\mathbb{D}^*)$ which satisfy
\begin{align*}
  \|F\|_{\mathcal{H}^{2}_k(\mathbb{D}^*)}:=\sup_{0<r<1}\left(\frac{r^{2k}}{2\pi}\int_0^{2\pi}\left|F(re^{i\theta})\right|^2\,d\theta\right)^{\frac{1}{2}}<\infty.
\end{align*}
It follows that
\begin{align}\label{E:alternativeH2k}
  \mathcal{H}_k^2(\mathbb{D}^*)=\Bigl\{F\in\mathcal{O}(\mathbb{D}^*):F(z)=(z^{-k}G(z))|_{\mathbb{D}^*}\text{ for some }G\in\mathcal{H}^2(\mathbb{D}) \Bigr\}.
\end{align}
Moreover, the Laurent series for any function in $\mathcal{H}^2_k(\mathbb{D}^*)$ is of the form $\sum_{j=-k}^\infty a_jz^j$ with $\sum_{j=-k}^\infty |a_j|^2<\infty$. 
This implies that $\mathcal{H}_k^2(\mathbb{D}^*)$ is a Hilbert space. Furthermore, pointwise evalution is bounded on $\mathcal{H}^2_k(\mathbb{D}^*)$. This follows from pointwise evalution being
 bounded on $\mathcal{H}^2(\mathbb{D})$, characterization \eqref{E:alternativeH2k}, and the fact that $z|_{\mathbb{D}^*}\neq 0$. Thus, $\mathcal{H}^2_k(\mathbb{D}^*)$ is a reproducing kernel 
 Hilbert space. Finally, 
$\mathcal{H}^2_k(\mathbb{D}^*)$ and $\h^2_k(\mathbb{D}^*)$  can be seen to be isometrically isomorphic, up to a constant factor,  by mapping the $j$-th Laurent series coeffient of $F\in\mathcal{H}_k^2(\mathbb{D}^*)$ to the $j$-th Fourier coefficient of $F|_{b\mathbb{D}}$ for all $j\geq k$.

%%%%%%%%%%%%%%%%%%%%%%%%%%%%%%%%%%%%%%%%%%%%%%%%%%
%%%%%%%%%%%%%%%%%%%%%%%%%%%%%%%%%%%%%%%%%%%%%%%%%%
%                    Section 3
%%%%%%%%%%%%%%%%%%%%%%%%%%%%%%%%%%%%%%%%%%%%%%%%%%
%%%%%%%%%%%%%%%%%%%%%%%%%%%%%%%%%%%%%%%%%%%%%%%%%%

\section{Hardy spaces on hypersurface-deleted domains}\label{S:varietydeleteddomains}

The construction of the Hardy spaces for $\mathbb{D}^*$ suggests a general inheritance scheme for the construction of Hardy spaces for domains that are obtained by removing  certain complex hypersurfaces from a given domain. As 
is the case of $\mathbb{D}^*$ in Section~\ref{sec_examples}, one starts with a domain $\Omega$ and a boundary measure $\nu$ that together carry their own  Hardy space structure. We henceforth refer to such a pair $(\Om, \nu)$ as a \textit{parent space}.

We detail requirements on the parent space $(\Om,\nu)$ in Subsection~\ref{SS:parentingspace}. In Subsection~\ref{SS:variety},
we describe the class of complex hypersurfaces that will be removed from $\Om$ to produce the so-called hypersurface-deleted domain $\Om^*$. The inheritance scheme is described in Subsection~\ref{SS:inheritance}.

%%%%%%%%%%%%%%%%%%%%%%%%%%%%%%%%%%%%%%%%%%%%%%%%%%
%                    Parent Space
%%%%%%%%%%%%%%%%%%%%%%%%%%%%%%%%%%%%%%%%%%%%%%%%%%

\subsection{Requirements on the parent space}
\label{SS:parentingspace}
We consider a domain $\Omega\Subset\mathbb{C}^n$ equipped with a finite Borel measure $\nu$ on its topological boundary $b\Omega$. We denote the support of $\nu$ by $T$, and set 
	\bes	
		\Omega_T:=\Omega\cup T.
	\ees
We discuss some conditions that allow us to identify reproducing kernel Hilbert spaces of holomorphic functions on $\Omega$ that admit boundary values on $T$ for, at least, a dense subspace.

\begin{definition}\label{D:weaklyadmissible}
Let $(\Om,\nu)$ be as above, and $\mathcal{F}$ be a family of complex-valued functions on $\Om_T$. Then $\mathcal{F}$ is said to be {\em weakly admissible} if and only if
    \begin{itemize}
    \item[(i)] $F|_T \in L^2(\nu)$ for any $F\in \mathcal{F}$, 		
			and

  \item[(ii)] for any compact set $K\subset\Omega$,  there exists a $C_K>0$ such that
    \bes
		\sup\big\{\,|F(z)|:z\in K\,\big\}\leq
			C_K\left\|F|_T\right\|_{L^2(\nu)}\;\;\text{for\ \ all}\ 		\;\;F\in\mathcal{F}.
		\ees
   \end{itemize}
\end{definition}

If we further assume that $\mathcal{F}$ is closed under subtraction, then each element of $\mathcal{F}$ is uniquely determined by its values along $T$. 

We focus on the family of holomorphic functions given by
	\bes\label{E:A-fam}
 		\mathcal{A}(\Omega,\nu):=\mathcal{O}(\Omega)\cap \mathcal{C}(\Omega_T).
	\ees
Note that $ \mathcal{A}(\Omega,\nu)$ is an algebra over $\mathbb C$.  It satisfies condition (i) in Definition~\ref{D:weaklyadmissible} because $\mathcal{C}(T) \subset L^2(\nu)$ whenever $\nu$ is a finite Borel measure.
  
\begin{definition}\label{D:Hardyspace} 
Let $(\Om,\nu)$ be such that $\mathcal{A}(\Omega,\nu)$ is weakly admissible. We define the {\em pre-Hardy space} associated to $(\Om,\nu)$ as
	\bes
		\mathfrak{H}^2(\Om,\nu):=
			\overline{\mathcal{A}(\Omega,\nu)|_T}^{L^2(\nu)},
	\ees
where
  \bes
   \mathcal{A}(\Omega,\nu)|_T:= \big\{\,f: T\to\mathbb C,\ \  f=F|_T\ \ \text{for some}\ F\in \mathcal{A}(\Omega,\nu)\big\}.
	\ees
   \end{definition}

Proposition \ref{P:Riesz-bd}, and the subsequent discussion, justifies the nomenclature introduced in Definition \ref{E:A-fam}. Note that despite the nonstandard terminology, the following proposition is standard in functional analysis.

\begin{Prop}\label{P:Riesz-bd}
Suppose that $\mathcal{A}(\Omega,\nu)$ is weakly admissible. Then for any $z\in\Omega$, there exists a unique bounded linear functional 
 \bes	
	\Ev_z:\  \mathfrak{H}^2(\Om,\nu)\rightarrow \mathbb C
	\ees 
such that
$\Ev_z(F|_T)=F(z)$ for any $F \in \mathcal{A}(\Omega,\nu)$. Furthermore, there exists a unique function $s:\Om\times T\rightarrow \C$ such that 
\begin{itemize}
  \item[(1)] $\overline{s(z,.)}\in\mathfrak{H}^2(\Om,\nu)$ 
  for all $z\in\Omega$,  and

  \item[(2)] $\Ev_z$ and $s(z,.)$ are related through the integral representation given by
	\bes\label{E:repr}
  \Ev_z(f)= \langle\, f(.), \,\overline{s(z,.)}\, \rangle_{L^2(\nu)}= \int\limits_{T}\!\!f(w) s(z, w)\;d\nu(w)\quad\text{for any}\;\;
  f\in\mathfrak{H}^2(\Om,\nu).
	\ees
  \end{itemize}
 \end{Prop}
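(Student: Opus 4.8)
The plan is to realize $\Ev_z$ via the Riesz representation theorem and then extract $s$ from it by an elementary functional-analytic argument. First I would fix $z\in\Omega$. The map $F\mapsto F(z)$ is defined on the dense subspace $\mathcal{A}(\Omega,\nu)|_T\subset\mathfrak{H}^2(\Om,\nu)$; here one must first note this is well-defined, i.e.\ that $F|_T=0$ in $L^2(\nu)$ forces $F(z)=0$. This follows from condition (ii) of weak admissibility applied to the compact set $K=\{z\}$ (together with the fact that $\mathcal{A}(\Omega,\nu)$ is closed under subtraction, being an algebra). The same estimate (ii) shows $|F(z)|\le C_{\{z\}}\|F|_T\|_{L^2(\nu)}$, so $F|_T\mapsto F(z)$ is a bounded linear functional on the dense subspace $\mathcal{A}(\Omega,\nu)|_T$, hence extends uniquely to a bounded linear functional $\Ev_z$ on all of $\mathfrak{H}^2(\Om,\nu)$; uniqueness of the extension is the standard BLT theorem. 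This proves the first assertion.

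Next, for the existence of $s$, I would apply the Riesz representation theorem to $\Ev_z$: there is a unique $k_z\in\mathfrak{H}^2(\Om,\nu)$ with $\Ev_z(f)=\langle f,k_z\rangle_{L^2(\nu)}$ for all $f\in\mathfrak{H}^2(\Om,\nu)$. Now define $s(z,w):=\overline{k_z(w)}$ for $w\in T$; equivalently $\overline{s(z,\cdot)}=k_z\in\mathfrak{H}^2(\Om,\nu)$, which is exactly property (1). Unwinding the inner product gives
\[
  \Ev_z(f)=\langle f,k_z\rangle_{L^2(\nu)}=\int_T f(w)\,\overline{k_z(w)}\;d\nu(w)=\int_T f(w)\,s(z,w)\;d\nu(w),
\]
which is property (2), and specializing $f=F|_T$ recovers the reproducing identity $F(z)=\int_T F(w)s(z,w)\,d\nu(w)$ for $F\in\mathcal{A}(\Omega,\nu)$. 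This defines $s$ pointwise on all of $\Omega\times T$ as $z$ ranges over $\Omega$.

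Finally, for uniqueness of $s$: if $s'$ is another function satisfying (1) and (2), then for each fixed $z$ both $\overline{s(z,\cdot)}$ and $\overline{s'(z,\cdot)}$ lie in $\mathfrak{H}^2(\Om,\nu)$ and represent the same functional $\Ev_z$, so by uniqueness in the Riesz theorem $\overline{s(z,\cdot)}=\overline{s'(z,\cdot)}$ as elements of $L^2(\nu)$; since this holds for every $z\in\Omega$, $s=s'$ as functions $\Omega\times T\to\mathbb{C}$ (with the understanding that equality along $T$ is $\nu$-a.e., or genuinely pointwise if one prefers the canonical representative). The only mildly delicate point is the well-definedness of the evaluation on the quotient represented by $\mathcal{A}(\Omega,\nu)|_T$ and the passage from ``reproducing for $\mathcal{A}$'' to ``reproducing for all of $\mathfrak{H}^2$'' — but both are immediate once one has weak admissibility and density, so I do not anticipate a genuine obstacle; this is, as the authors note, standard functional analysis packaged for the present setting.
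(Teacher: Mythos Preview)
Your proposal is correct and follows essentially the same route as the paper: the paper's proof simply invokes the Bounded Linear Extension Theorem to produce $\Ev_z$ from the densely defined evaluation $F|_T\mapsto F(z)$, and then the Riesz Representation Theorem to obtain and uniquely characterize $s(z,\cdot)$. Your write-up spells out the well-definedness step and the uniqueness argument in more detail than the paper does, but the strategy is identical.
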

%%%%%%

We refer to the function $s $ as the  {\em Szeg{\H o} kernel for $\mathfrak{H}^2(\Om,\nu)$}.

\begin{proof}
Note that $\mathcal{A}(\Omega,\nu)|_T$ is a normed vector space when endowed with the norm for $L^2(\nu)$.
The existence of $\Ev_z(f)$ follows from the Bounded Linear Extension Theorem applied to the evaluation $F|_T \mapsto F(z)$ for $F \in \mathcal{A}(\Omega,\nu)|_T$. An application of the Riesz Representation Theorem then yields the existence and uniqueness of $s(z,.)$.
\end{proof}
 
In the literature, Hardy spaces are considered as examples of reproducing kernel Hilbert spaces on $\Om$. Note that $\mathfrak{H}^2(\Om,\nu)$ contains functions that \textit{a priori} are defined only on $T\subseteq b\Om$. 
With an additional assumption on $\mathcal{A}(\Omega,\nu)$, $\mathfrak{H}^2(\Om,\nu)$ may be identified with a function space on $\Omega$, and hence may be considered as a reproducing kernel Hilbert space on $\Omega$. 

To identify the appropriate function space on $\Omega$ for a given weakly admissible $\mathcal{A}(\Omega,\nu)$, we note first that $\Ev_{(.)}(f)$ is holomorphic on $\Omega$ for all $f\in\mathfrak{H}^2(\Omega,\nu)$. This is obvious if there exists an $F\in\mathcal{A}(\Omega,\nu)$ such that $F|_T=f$. It is also true for general $f\in\mathfrak{H}^2(\Omega,\nu)$ because the uniform boundedness of the evaluation operators on compacta, see (ii) in Definition \ref{D:weaklyadmissible}, says that $\Ev_{(.)}(f)$ is the normal limit of holomorphic functions. Thus, the map
\bea\label{E:corresp}
  \mathcal{I}:\mathfrak{H}^2(\Omega,\nu)&
  \longrightarrow& \mathcal{O}(\Omega)\\
  f&\mapsto& F, \textrm{ where }F(z):=\Ev_z(f)\notag
\eea
is well-defined. Denote by $\mathfrak{X}(\Omega,\nu):=   \mathcal{I}\left(\mathfrak{H}^2(\Omega,\nu)\right) \subset \mathcal{O}(\Omega)$. The injectivity of $\mathcal{I}$ can be stated through a condition on certain Cauchy sequences in $\mathcal{A}(\Omega,\nu)$. We formulate this condition for general function spaces as follows. 

%We should also note that an exhaustion-based definition
%is problematic for a number of reasons: the resulting Hardy space may depend on the choice of the exhaustion (see \cite{Monguzzi} for one such choice); the notion 
%of non-tangential limit along certain parts of $b\Om^*$ may be either meaningless or too restrictive, etc. %Furthermore, 

%%%%%%
\begin{definition}\label{D:stronglyadmissible}
Let $(\Om,\nu)$ be as above, and $\mathcal{F}$ be a weakly admissible family of complex-valued functions on $\Om_T$. Then $\mathcal{F}$ is said to be {\em strongly admissible} if for 
  any sequence $\{F_n\}_{n\in\mathbb{N}}\subset\mathcal{F}$ for which $\left\{(F_n){|_T}\right\}_{n\in\mathbb{N}}$ is Cauchy in $L^2(\nu)$ and $F_n\rightarrow 0$
  uniformly on compacta in $\Om$ as $n\to\infty$, the sequence $\left\{(F_n){|_T}\right\}_{n\in\mathbb{N}}$ converges to $0$ in $L^2(\nu)$ as $n\to\infty$.
\end{definition}
%%%%%%

Now suppose $(\Om,\nu)$ is such that $\mathcal{A}(\Omega,\nu)$ is strongly admissible. Then we may equip $\mathfrak{X}(\Omega,\nu)$ with a reproducing kernel Hilbert space structure via $\mathcal I$. This allows us to identify $\h^2(\Om,\nu)$ with a reproducing kernel Hilbert space on $\Omega$, and hence we can make the following definition.

\begin{definition}\label{D:Hardyspace2} Let $(\Om,\nu)$ be such that $\A(\Om,\nu)$ is strongly admissible. The {\em Hardy space} of $(\Om,\nu)$ is $\h^2(\Om,\nu)$. 
\end{definition}

We note that we do not have an independent description of $\mathfrak X(\Om,\nu)$ in this general setting of strongly admissible function spaces. However, in all the examples considered in this paper, $\mathfrak X(\Om,\nu)$ is independently described using an exhaustion-based approach, see the spaces denoted by $\mathcal H^2(.)$ in Sections~\ref{sec_examples}, \ref{S:Stabilization} and \ref{S:Hartogs}. 

\medskip
Examples of $(\Om,\nu)$ for which $\A(\Om,\nu)$ is strongly admissible include
\begin{itemize}
  \item[(1)] $(\Om,\sigma)$, where $\Om\subset\C$ is a $\cont^{1,\alpha}$-smooth bounded domain, and $\sigma$ is the arc-length measure on $b\Om$, see the discussion at the beginning of Section~\ref{S:planar}.
  \item[(2)] $(\mathbb{D}^n,\sigma_{S^1}\times...\times\sigma_{S^1})$, where $\sigma_{S^1}$ is the arc-length measures of the unit circle in the $j$-th coordinate, and $T=(b\mathbb{D})^n$, and
  \item[(3)] $(\Omega,\sigma)$, where $\Omega\subset\Cn$ is a $\cont^2$-smooth bounded domain, $\sigma$ is the surface measure of $b\Omega$ , and $T=b\Omega$, see \cite{St15}.
\end{itemize}
On the other hand, recall from Subsection~\ref{sec_D*} that 
$\A(\mathbb{D}^*,\sigma_{S^1})$ is not even a weakly admissible subspace of $L^2(b\D,\sigma_{S^1})$. Conditions analogous to weak and strong admissibility, albeit in a broader context, were identified in \cite[Theorem p.~347]{A50}. An example is also given therein to demonstrate the inequivalence of the two conditions, see \cite[p.~349]{A50}.
\medskip

%%%%%%%%%%%%%%%%%%%%%%%%%%%%%%%%%%%%%%%%%%%%%%%%%%
%                    Requirements on the variety
%%%%%%%%%%%%%%%%%%%%%%%%%%%%%%%%%%%%%%%%%%%%%%%%%%

\subsection{Requirements on the hypersurface}\label{SS:variety}

We first recall some standard notions from analytic geometry. Let $K\Subset\mathbb{C}^n$ be a bounded set. 

\begin{definition} Denote by $\hol(K)$ the set of equivalence classes of 
	\bes
		\left\{(f,\om):\text{$\om$ is an open neighborhood of $K$ and $f:\om\rightarrow\C$ is holomorphic}\right\}
	\ees
modulo the equivalence relation $(f_1,\om_1)\sim (f_2,\om_2)$ if and only if there is an open neighborhood $\om\subset\om_1\cap\om_2$ of $K$ such that $f_1|_{\om}=f_2|_{\om}$. The equivalence class of $(f,\om)$ will be denoted simply by $f$, which we call the {\em germ of an analytic function on $K$}. Note that $\hol(K)$ forms a ring under multiplication and addition. 
\end{definition}

\begin{definition} Let $\om\subset\Cn$ be an open set. A closed subset $V$ of $\om$ is an \emph{analytic variety in $\omega$} if for any $z\in\omega$ there exists a neighborhood 
  $U(z)\subset\omega$ such that $U(z)\cap V$ is the common zero set of some nontrivial $f_1,\dots, f_k\in\mathcal{O}(U(z))$ for some $k\in\N$. We say that $V$ is a \emph{locally principal variety in $\omega$} if $k$ may be chosen equal to 1 for any $z\in\omega$.
\end{definition}

\begin{definition}
Define $\mathscr V(K)$ to be the set of equivalence classes of 
	\bes
		\left\{(V,\om):\text{$\om$ is an open neighborhood of $K$, $V\subsetneq \om$ is a locally principal variety in $\om$}\right\}
	\ees
modulo the equivalence relation $(V_1,\om_1)\sim (V_2,\om_2)$ if and only if there is an open neighborhood $\om\subset\om_1\cap\om_2$ of $K$ such that $V_1|_{\om}=V_2|_{\om}$. The equivalence class of $(V,\om)$ will be denoted simply by $V$, which we call the {\em germ of an analytic hypersurface in $K$}. 
\end{definition}

We next focus on the situation when $K=\overline{\Omega}$ for
$\Omega\Subset\mathbb{C}^n$ is a domain. Note that the zero set of any  nontrivial $f\in\hol(\overline\Omega)$ gives rise to an element $V\in\mathscr V(\overline\Omega)$, but not every element in $\mathscr V(\overline\Omega)$ arises this way. If $V\in\mathscr V(\overline\Omega)$ is indeed the zero set of a single $f\in\hol(\overline\Omega)$, then $V$ is called {\em principal} and such an $f$ {\em a defining function for $V$}. A principal germ $V$ is called {\em minimally defined} if it admits a defining function $f\in\hol(\overline\Om)$ such that, whenever $U\subset\overline\Om$ is an open set (in the relative topology) and $g\in\hol(U)$ vanishes on $U\cap V$, then $f|_U$ divides $g$ in $\hol(U)$. 
We call such an $f$ a {\em minimal defining function of $V$ in $\hol(\overline\Omega)$}. It follows from a standard argument that 
 minimal defining functions are unique up to non-vanishing holomorphic factors. We state this as a lemma for easy reference. 

\begin{lemma}\label{L:minimality} Let $V$ be a minimally defined germ of an analytic hypersurface in $\overline\Om$. Suppose $f , g\in\hol(\overline\Omega)$ are two minimal defining functions of $V$. Then there is an $h\in\hol(\overline\Omega)$ such that $f =h g$, and $h$ does not vanish on $\overline\Om$. 
\end{lemma}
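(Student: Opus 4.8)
The plan is to compare the two minimal defining functions locally on $\overline\Om$ using the division property that defines minimality, then patch the local quotients into a single global germ. First I would unwind the definitions: since $f,g\in\hol(\overline\Om)$, each is represented by a genuine holomorphic function on some open neighborhood of $\overline\Om$; shrinking, I may assume $f$ and $g$ are both holomorphic on one common open neighborhood $\om$ of $\overline\Om$, and that $V$ is represented by a locally principal variety in $\om$ whose zero set agrees with both $\{f=0\}$ and $\{g=0\}$ near $\overline\Om$. Now fix any point $p\in\overline\Om$. Because $g$ is a minimal defining function of $V$ and $f$ vanishes on $V$ near $p$, the minimality of $g$ (applied on a small relative neighborhood $U\subset\overline\Om$ of $p$, extended slightly to an open set in $\om$) gives $h_1\in\hol(U)$ with $f = h_1 g$ on $U$; symmetrically, minimality of $f$ gives $h_2\in\hol(U')$ with $g = h_2 f$ on a neighborhood $U'$ of $p$. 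On the overlap this yields $f = h_1 h_2 f$, hence $h_1 h_2 \equiv 1$ on the set where $f\not\equiv 0$, which is dense (as $f$ is a nontrivial holomorphic function and $\overline\Om$ is connected with nonempty interior), so $h_1 h_2 = 1$ identically near $p$ and in particular $h_1$ is non-vanishing there.

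Next I would check that these local quotients glue. On an overlap of two such neighborhoods, say around $p$ and $p'$, both $h_1^{(p)}$ and $h_1^{(p')}$ satisfy $f = h_1 g$, so their difference multiplies $g$ to zero; again using that $\{g=0\}$ is nowhere dense in the (locally) irreducible ambient set — more precisely that $g\not\equiv 0$ on any component of any connected open piece — we get $h_1^{(p)} = h_1^{(p')}$ on the overlap. Thus the local holomorphic functions $h_1^{(p)}$, $p\in\overline\Om$, agree on overlaps and define a single holomorphic function $h$ on an open neighborhood of $\overline\Om$, i.e.\ an element $h\in\hol(\overline\Om)$, with $f = h g$ and $h$ non-vanishing at every point of $\overline\Om$. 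Shrinking the neighborhood of $\overline\Om$ once more so that $h$ is non-vanishing on all of it (using compactness of $\overline\Om$ and continuity of $h$), we obtain $h\in\hol(\overline\Om)$ with $f = hg$ and $h$ non-vanishing on $\overline\Om$, as claimed.

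I expect the main obstacle to be purely bookkeeping about germs: making sure that the division property, which is stated for relatively open sets $U\subset\overline\Om$ and $g\in\hol(U)$, is invoked correctly — in particular that $f|_U$, a priori a germ, can legitimately play the role of the "$g$" in the definition of minimality of the other function, and that the quotient $h_1$ produced lives in $\hol(U)$ rather than merely in some localization. The substantive analytic input is the standard fact that a nontrivial holomorphic function on a connected open subset of $\Cn$ has nowhere-dense zero set, which is what forces $h_1 h_2 = 1$ and makes the glued quotient well-defined; this is the "standard argument" the paragraph before the lemma alludes to. Everything else — the passage from $f = hg$ with $h$ pointwise non-vanishing on $\overline\Om$ to $h$ non-vanishing on a neighborhood of $\overline\Om$ — is a routine compactness argument.
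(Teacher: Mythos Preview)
The paper does not actually supply a proof of this lemma; the sentence preceding it reads ``It follows from a standard argument that minimal defining functions are unique up to non-vanishing holomorphic factors,'' and the lemma is then stated ``for easy reference'' without proof. Your proposal correctly furnishes that standard argument.

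One simplification worth noting: you need not work locally and then glue. The division property in the definition of minimality is stated for \emph{any} relatively open $U\subset\overline\Om$, so you may simply take $U=\overline\Om$. Since $f\in\hol(\overline\Om)$ vanishes on $V=V\cap\overline\Om$, minimality of $g$ yields directly an $h_1\in\hol(\overline\Om)$ with $f=h_1 g$; symmetrically, minimality of $f$ gives $h_2\in\hol(\overline\Om)$ with $g=h_2 f$. Then $(1-h_1 h_2)f\equiv 0$ on a connected open neighborhood of $\overline\Om$, and the nowhere-density of $\{f=0\}$ forces $h_1 h_2\equiv 1$, exactly as you argue. This global application of the division property eliminates the patching step and the germ-bookkeeping concern you raise at the end.
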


Finally, $V\in\mathscr V(\overline\Omega)$ is said to be {\em irreducible} if it cannot be expressed as $V_1\cup V_2$ for elements $V_1,V_2\in \mathscr V(\overline\Omega)$ distinct from $V$. Note that for any $V\in\mathscr V(\overline\Om)$, there is an $m\in\N$ such that $V\cap\Om=\cup_{j=1}^m (V_j\cap\Om)$, where each $V_j$ is an irreducible germ of an analytic hypersurface in $\overline\Om$, see \cite[\S~5.4]{Ch12}.

Subsequently, we consider domains as follows. 

\begin{definition}\label{D:vardeldom} Let $\Om\Subset\Cn$ be a domain. Let $V\in\mathscr V(\overline\Omega)$ be a finite union of irreducible, minimally defined germs of  analytic hypersurfaces on $\overline\Om$. Then
	\bes
	\Om^* = \Om\setminus V
	\ees
is called a {\em hypersurface-deleted domain}. 
\end{definition}

We now discuss some examples of hypersurface-deleted domains. In the planar case, if $\Om\Subset\C$ is a domain and $V\subset\mathscr{V}(\overline\Om$), then $\Om\cap V=\{a_1,...,a_m\}$ for some $a_1,...,a_m\in\Om$ and $m\in\N$. It is immediate to see that  $f_j(z)=z-a_j$ is a minimal defining function of $\{a_j\}$ in $\hol(\overline\Omega)$. Thus, $\Om\setminus V=\Om\setminus\{a_1,...,a_m\}$ is a hypersurface-deleted domain. 

A further class of examples, which includes bounded convex domains in $\Cn$, is provided by the following result. Note that the result implies that for such $\Om$, $\Om\setminus V$ is a hypersurface-deleted domain for any $V\in\mathscr V(\overline\Om)$. 

\begin{Prop}\label{P:cousin}
Let $n>1$. Suppose $\Om\Subset\Cn$ is a domain such that $\overline\Om$ admits a Stein neighborhood basis and  $H^2(\overline\Om;\Z)=0$. Then any irreducible $V\in\mathscr V(\overline\Omega)$ is minimally defined. 
\end{Prop}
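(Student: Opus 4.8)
The plan is to exploit the two hypotheses on $\overline\Om$ separately: the Stein neighborhood basis upgrades germs on $\overline\Om$ to genuine holomorphic objects on an honest Stein open set, and the vanishing of $H^2(\overline\Om;\Z)$ removes the topological obstruction in the Second Cousin problem. First I would fix an irreducible $V\in\mathscr V(\overline\Om)$, represented by a locally principal variety in some open neighborhood $\om_0$ of $\overline\Om$. Since $\overline\Om$ has a Stein neighborhood basis, I can choose a Stein open set $\om$ with $\overline\Om\subset\om\subset\om_0$; shrinking if necessary, I can also arrange (using that $H^2(\overline\Om;\Z)=0$ and that $\overline\Om$ is a neighborhood-basis core, so the relevant cohomology of $\om$ agrees with that of $\overline\Om$ for $\om$ small enough) that $H^2(\om;\Z)=0$. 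On $\om$, $V$ is a locally principal analytic variety, i.e., locally cut out by a single holomorphic function; because $\om$ is Stein, the classical theory (Oka, Cartan, Serre — the Second Cousin problem is solvable on a Stein manifold whenever the second integral cohomology vanishes) gives a global $f\in\mathcal O(\om)$ whose divisor is exactly $V$. Restricting the germ of $f$ to $\overline\Om$ yields $f\in\hol(\overline\Om)$ with zero set $V$, so $V$ is principal.

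Next I would verify minimality. Let $U\subset\overline\Om$ be relatively open and $g\in\hol(U)$ vanish on $U\cap V$; after passing to a representative, $g$ is holomorphic on an open $\widetilde U\subset\om$ with $\widetilde U\cap\overline\Om=U$, and $g$ vanishes on $\widetilde U\cap V$ (shrinking $\widetilde U$ so that its intersection with $V$ is contained in the closure of $U\cap V$). The key point is that $f$ generates the full ideal sheaf of $V$ at every point: locally $V=\{f_x=0\}$ for some $f_x$, and since $f$ and $f_x$ have the same (reduced, because the divisor was taken with multiplicity one) zero set near $x$ and $\mathcal O_{\Cn,x}$ is a UFD, $f$ and $f_x$ differ by a unit — provided we know $f$ vanishes to order exactly one along each component of $V$. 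Here irreducibility of $V$ enters: $V$ has a single component (on $\Om$, at least; near boundary points one uses that $V$ is locally principal hence pure-dimensional), so the Cousin-II solution, which by construction has divisor $V$ with multiplicity one, is squarefree, and Nullstellensatz/UFD arguments give that $f_x$ divides $g_x$ in $\mathcal O_{\Cn,x}$ for every $x\in\widetilde U$. Patching these local quotients, $g/f$ is a well-defined holomorphic function on $\widetilde U\setminus V$ that is locally bounded near $V$, hence extends across $V$ by Riemann's removable singularity theorem to $h\in\mathcal O(\widetilde U)$ with $g=fh$; restricting to $\overline\Om$ shows $f|_U$ divides $g$. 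Thus $f$ is a minimal defining function and $V$ is minimally defined.

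The main obstacle I anticipate is the bookkeeping around passing between germs on the compact set $\overline\Om$ and functions on an actual Stein neighborhood, together with verifying that $H^2(\om;\Z)=0$ can be arranged: one needs that for a sufficiently small Stein $\om$ in the neighborhood basis, $\check{C}$ech cohomology of $\om$ with integer coefficients coincides with that of $\overline\Om$, which follows from the continuity/tautness properties of $\check{C}$ech cohomology on a decreasing family of neighborhoods with intersection $\overline\Om$ (using that $\overline\Om$ is compact, hence its $\check{C}$ech and singular cohomology agree and are the limit of the cohomologies of the neighborhoods). A secondary subtlety is the squarefreeness of the Cousin-II solution: one must be careful that ``divisor equal to $V$'' is interpreted as the reduced divisor, which is automatic once $V$ is locally principal and one takes the defining functions of the Cousin data with vanishing multiplicity one — this is where irreducibility guarantees there is no hidden multiplicity coming from distinct components coinciding. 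Everything else is a routine application of the standard sheaf-theoretic machinery on Stein spaces and the local UFD structure of $\mathcal O_{\Cn,x}$.
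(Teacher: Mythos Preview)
Your strategy---solve Cousin~II to get a global defining function, then check minimality locally---is the paper's. The paper, however, works directly on the compact set $\overline\Om$: citing a result of Dales it asserts that a compact set with Stein neighborhood basis and $H^2(\cdot;\Z)=0$ is already a Cousin~II set, so a global $f\in\hol(\overline\Om)$ comes straight from a finite Cousin~II distribution assembled from local \emph{minimal} defining functions of $V$ (these exist by irreducibility, referencing Chirka); minimality of $f$ is then inherited from that of the local data, essentially as in your UFD argument.

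Your detour through an open Stein neighborhood has a real gap in the cohomology step. Continuity of \v{C}ech cohomology gives $\check H^2(\overline\Om;\Z)\cong\varinjlim_\om\check H^2(\om;\Z)$; this does \emph{not} imply that some single $\om$ in the basis has $H^2(\om;\Z)=0$, only that every fixed class eventually dies upon restriction. The repair is easy and you do not need the whole group to vanish: the only class that matters is the Chern class of the line bundle $\mathcal O(V)$. By continuity that class becomes zero on some open $\om'\supset\overline\Om$; then choose a Stein $\om''$ from the basis with $\om''\subset\om'$, and on $\om''$ the bundle is topologically---hence, by Oka's principle on a Stein manifold, holomorphically---trivial, giving the global $f$. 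With this correction your argument goes through and is equivalent to the paper's; the paper's route simply sidesteps the neighborhood bookkeeping by invoking the Cousin~II result for compacta.
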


\begin{proof} The proof is  well-known.  For the reader's convenience, we highlight the main steps of the argument. Recall that a Cousin II distribution on the compact set $\overline\Om$ is a collection $\{(U_\iota,f_\iota)\}_{\iota\in I}$, where $\{U_\iota\}_{\iota\in I}$ is a (relatively) open cover of $\overline\Om$, and $f_\iota\in\hol(U_\iota)$ with $f_\iota|_{U_\iota\cap U_{\jmath}}=h_{\iota\jmath}\cdot f_{\jmath}|_{U_\iota\cap U_{\jmath}}$ for some nonvanishing $h_{\iota\jmath}\in\hol(U_\iota\cap U_{\jmath})$. The hypothesis on $\overline\Om$ implies that, given such a Cousin II distribution, there is an $f\in\hol(\overline\Om)$ such that $f_\iota=h_\iota\cdot f|_{U_\iota}$ for some nonvanishing $h_\iota\in\hol(U_\iota)$, for all $\iota\in I$, i.e., $\overline\Om$ is a Cousin II set, see \cite{Da74}. 

Let $V\in\mathscr V(\overline\Omega)$ be irreducible. Then $V$ admits a local minimal defining function at each point of $V\cap\overline\Om$, see \cite[\S 2.8.]{Ch12}. By compactness and Lemma \ref{L:minimality},
 there is a finite Cousin II distribution, $\{U_i,f_{i}\}_{i\in\{1,...,m\}}$, such that $f_i$ is a minimal defining function of $V\cap U_i$ for $i\in\{1,...,m\}$. We claim that the Cousin II solution, $f\in\hol(\overline\Om)$, for this distribution is a minimal defining function of $V$ in $\hol(\overline\Om)$. First observe that $f|_{U_i\cap V}=(h_i^{-1}\cdot f_i)|_{U_i\cap V}=0$ for $i\in\{1,...,m\}$. Thus, $f$ vanishes on $V$. Next, let $U\subset\overline\Om$ be a (relatively) open subset and $g\in\hol(U)$ be such that $g$ vanishes on $U\cap V$. Since each $f_i$ is minimal, it follows that each $f_i$ divides $g$ in $\hol(U\cap U_i)$. Furthermore,  $f|_{U_i}$ divides $f_i$ in $\hol(U_i)$, in particular $f|_{U\cap U_i}$ divides $f_i$ in $\hol(U\cap U_i)$
 for each $i$. Therefore $f|_{U\cap U_i}$ divides $g$ in $\hol(U\cap U_i)$.
 That is,
 $f|_U$ divides $g$ locally and hence in $\hol(U)$ since $f$ and $g$ are globally defined in $U$. 
\end{proof}

In general, if $V\in\mathscr V(\overline\Omega)$ is principal, then any defining function $f\in\hol(\overline\Omega)$ of $V$ is minimal if and only if $\{z\in\om:\det Df(z)=0\}$ is nowhere dense in $V\cap\om$ for some open neighborhood $\om$ of $\overline\Om$, see \cite[\S~2.9]{Ch12}. Thus, by this criterion, $\Om\setminus V$, where $V$ is an affine hyperplane, is always a hypersurface-deleted domain. 

%%%%%%%%%%%%%%%%%%%%%%%%%%%%%%%%%%%%%%%%%%%%%%%%%%
%                   The inheritance scheme
%%%%%%%%%%%%%%%%%%%%%%%%%%%%%%%%%%%%%%%%%%%%%%%%%%

\subsection{The inheritance scheme}\label{SS:inheritance}
We first construct Hardy spaces for triples of the form $(\Om,\nu, V)$, such that
\begin{itemize}
  \item[(i)] $\Omega\Subset\mathbb{C}^n$ is a domain, $\nu$ is a finite Borel measure on $b\Omega$,
  \item[(ii)] $V$ is an irreducible, minimally defined germ of an analytic hypersurface in $\overline\Omega$, and
  \item[(iii)] $\Om\cap V\neq \emptyset$ and $\nu(T\cap V)=0$, where $T=\supp(\nu)$.
\end{itemize}
The case of general hypersurface-deleted domains is discussed at the end of this subsection. 

As before, $\Om_T=\Om\cup T$, $\Omega^*=\Omega\setminus V$, and $\mathcal{A}(\Omega,\nu)$ is as in Definition~\ref{E:A-fam}. We also set $T^*:=T\setminus V$. Let 
$\psi \in \mathcal{O}(\overline{\Omega})$ be a minimal defining function of $V$. Then for any non-negative integer $k$, we consider the following subset of $\hol(\Om^*)\cap\cont(\Om^*\cup T^*)$
	\begin{align}\label{D:definitionAk}
  	\mathcal{A}_k(\Omega^*,\nu):=\left\{F:\Om^*\cup T^*\rightarrow\C:
	F=(\psi^{-k}G)|_{\Om^*\cup T^*}\
	 \text{for some}\ G\in\mathcal{A}(\Omega,\nu)\qquad  \right. \\
		\left.  \text{and}\ F|_{T^*}\in L^2(\nu)\right\}.\notag
	\end{align}
Note that it follows from Lemma~\ref{L:minimality}, that $\A_k(\Om^*,\nu)$ does not depend on the choice of minimal defining function of $V$. Hence, we make no reference to $\psi$ in our notation and work with a fixed choice of $\psi$ for the purpose of our proofs.

We identify $\A_k(\Om^*,\nu)$ with a function space on $\Om^*\cup T$ by extending its members trivially, by zero, to $T\cap V$, which is a measure-zero set. Then the space of boundary values of $\A_k(\Om^*,\nu)$, i.e.,
	\bes
		\A_k(\Om^*,\nu)|_T=\left\{F|_T: F\in\A_k(\Om^*,\nu)\right\}	
	\ees
is a subspace of $L^2(\nu)$. Note that as subspaces of $L^2(\nu)$, $\A_k(\Om^*,\nu)|_T=\A_k(\Om^*,\nu)|_{T^*}$. This allows us to speak of the notion of weak and strong admissibility for $\A_k(\Om^*,\nu)$. The spaces $\mathcal{A}_k(\Omega^*,\nu)$ always inherit the properties of weak and strong admissibility from $\mathcal{A}(\Omega,\nu)$.

\begin{theorem}\label{T:admissibleheir} For $(\Om,\nu,V)$ satisfying $(i)$, $(ii)$ and $(iii)$ above, the following holds. 
  \begin{itemize} 
     \item[$(1)$]  If $\mathcal{A}(\Omega,\nu)$ is weakly admissible, then so is $\mathcal{A}_k(\Omega^*,\nu)$ for any $k\in\N_0$.
     \item[$(2)$]  If $\mathcal{A}(\Omega,\nu)$ is strongly admissible, then so is $\mathcal{A}_k(\Omega^*,\nu)$ for any $k\in\N_0$.
  \end{itemize}
\end{theorem}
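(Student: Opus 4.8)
The plan is to reduce the two assertions about $\mathcal A_k(\Omega^*,\nu)$ to the corresponding assertions about $\mathcal A(\Omega,\nu)$ via the bijection $G\mapsto \psi^{-k}G$, using that $\psi|_T$ is a fixed bounded function that is nonzero $\nu$-almost everywhere. First I would record the basic structural facts: since $\psi\in\hol(\overline\Omega)$, it is continuous and bounded on $\overline\Omega$, say $|\psi|\le M$ on $\overline\Omega$; since $\nu(T\cap V)=0$ and $\psi$ vanishes on $V\cap\overline\Omega$ (and only there, up to the minimality considerations), the function $\psi|_T$ is nonzero $\nu$-a.e.; and on any compact $K\subset\Omega^*$ we have $\inf_K|\psi|>0$. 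I would also note that $F=\psi^{-k}G\in\mathcal A_k(\Omega^*,\nu)$ with $G\in\mathcal A(\Omega,\nu)$ determines $G$ uniquely (as $G=\psi^k F$ holds on the dense-in-$\Omega$ open set $\Omega^*$, hence on $\Omega$ by continuity of holomorphic functions), so the correspondence $F\leftrightarrow G$ is well-defined both ways.

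For part $(1)$, weak admissibility. Condition (i) is built into the definition of $\mathcal A_k(\Omega^*,\nu)$, so nothing is to be proved there. For condition (ii), fix a compact $K\subset\Omega$; the issue is that $K$ may meet $V$, so I cannot directly bound $|F|=|\psi^{-k}G|$ on $K$. Instead I would argue that any $F\in\mathcal A_k(\Omega^*,\nu)$ extends holomorphically across $V\cap\Omega$: indeed $\psi^k F=G$ on $\Omega$, and since $F$ is holomorphic on $\Omega^*=\Omega\setminus V$ and locally bounded near $V$ is \emph{not} assumed — so instead I use that $G\in\mathcal A(\Omega,\nu)$ and estimate $G$ first. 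Precisely, for the given compact $K\subset\Omega$ and any $F=\psi^{-k}G$, weak admissibility of $\mathcal A(\Omega,\nu)$ gives $\sup_K|G|\le C_K\|G|_T\|_{L^2(\nu)}$. Now $\|G|_T\|_{L^2(\nu)}=\|\psi^k F|_T\|_{L^2(\nu)}\le M^k\|F|_T\|_{L^2(\nu)}$. If $K\subset\Omega^*$ this already controls $\sup_K|F|$ after dividing by $\inf_K|\psi|^k$; for a general compact $K\subset\Omega$ meeting $V$, one writes $K=K'\cup K''$ where $K'=K\cap\{|\psi|\ge\delta\}$ and handles the neighborhood of $V$ by a maximum-principle / removable-singularity argument: $F$ extends to $\hol(\Omega)$ because $\psi^k F=G$ there, so on a slightly larger compact $\widetilde K$ with $\inf_{b\widetilde K}|\psi|>0$ we bound $\sup_{\widetilde K}|F|$ by $\sup_{b\widetilde K}|F|$ via the maximum principle, and $b\widetilde K$ can be taken inside $\{|\psi|\ge\delta\}$. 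Putting these together yields a constant $C'_K$ with $\sup_K|F|\le C'_K\|F|_T\|_{L^2(\nu)}$.

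For part $(2)$, strong admissibility. Let $\{F_n\}\subset\mathcal A_k(\Omega^*,\nu)$ with $\{F_n|_T\}$ Cauchy in $L^2(\nu)$ and $F_n\to 0$ uniformly on compacta of $\Omega^*$; write $F_n=\psi^{-k}G_n$, $G_n\in\mathcal A(\Omega,\nu)$. Then $G_n|_T=\psi^k F_n|_T$, and since multiplication by the bounded function $\psi^k|_T$ is a bounded operator on $L^2(\nu)$, $\{G_n|_T\}$ is Cauchy in $L^2(\nu)$. Next, $G_n=\psi^k F_n\to 0$ uniformly on every compact $K\subset\Omega\setminus V$; and on a neighborhood of $V\cap\Omega$ the maximum-principle argument from part $(1)$ (applied to $G_n$, which is genuinely holomorphic on all of $\Omega$) upgrades this to uniform convergence to $0$ on every compact of $\Omega$. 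By strong admissibility of $\mathcal A(\Omega,\nu)$, $G_n|_T\to 0$ in $L^2(\nu)$. Finally $F_n|_T=\psi^{-k}G_n|_T$ $\nu$-a.e., but this is the delicate point: dividing by $\psi^{-k}$ is \emph{not} a bounded operation on $L^2(\nu)$ in general. However, we also know $\{F_n|_T\}$ is Cauchy in $L^2(\nu)$, hence converges in $L^2(\nu)$ to some $h$; passing to a subsequence, $F_n|_T\to h$ $\nu$-a.e., so $G_n|_T=\psi^k F_n|_T\to \psi^k h$ $\nu$-a.e., while $G_n|_T\to 0$ in $L^2(\nu)$ forces (again along a further subsequence) $G_n|_T\to 0$ $\nu$-a.e.; therefore $\psi^k h=0$ $\nu$-a.e., and since $\psi\neq 0$ $\nu$-a.e.\ we get $h=0$ $\nu$-a.e. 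Thus $F_n|_T\to 0$ in $L^2(\nu)$, as required.

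The main obstacle is exactly the asymmetry just exploited: the map $G\mapsto\psi^{-k}G$ is well-behaved in one direction (multiplication by bounded $\psi^k$) but unbounded in the other, so one cannot simply transport convergence back and forth. The resolution in both parts is to never divide by $\psi$ at the level of $L^2(\nu)$ estimates: for weak admissibility one absorbs the division into the compact-set constant (legitimate because $\inf_K|\psi|>0$ on compacta of $\Omega^*$, with the maximum principle handling compacta that touch $V$), and for strong admissibility one extracts the conclusion $h=0$ from an a.e.\ argument plus the hypothesis $\nu(T\cap V)=0$, rather than from a norm inequality. A secondary technical point worth stating carefully is the removable-singularity/extension step: every $F\in\mathcal A_k(\Omega^*,\nu)$ is of the form $\psi^{-k}G$ with $G$ holomorphic on all of $\Omega$, so $F$ itself extends to a meromorphic function on $\Omega$ with poles only along $V$, which is what makes the maximum-principle argument on $\Omega$ (as opposed to $\Omega^*$) available for the functions $G_n$.
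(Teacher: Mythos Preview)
Your part (1) contains a misreading that sends you down an unnecessary and partly incorrect detour. Weak admissibility of $\mathcal A_k(\Omega^*,\nu)$ is a statement about compacta $K\subset\Omega^*$, not $K\subset\Omega$: the ambient domain for this family is $\Omega^*$. So the case ``$K$ meets $V$'' never arises, and your sentence ``$F$ extends to $\hol(\Omega)$ because $\psi^k F=G$ there'' is simply false in general (take $F=\psi^{-k}$, $G\equiv 1$); you yourself note two lines later that $F$ is only meromorphic on $\Omega$. Once you restrict to $K\subset\Omega^*$, the argument you \emph{do} give---bound $G$ on $K$ by weak admissibility of $\mathcal A(\Omega,\nu)$, pass to $F$ via $\inf_K|\psi|^k>0$, and control $\|G|_T\|$ by $\|\psi|_T\|_\infty^k\|F|_T\|$---is exactly the paper's proof.

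For part (2) your overall strategy is correct, but two points deserve comment. First, to upgrade $G_n\to 0$ on compacta of $\Omega^*$ to compacta of $\Omega$, the paper does \emph{not} invoke a maximum-principle argument (which would require choosing exhausting domains whose boundaries avoid $V$, a doable but fiddly step). Instead it uses weak admissibility of $\mathcal A(\Omega,\nu)$ directly: since $\{G_n|_T\}$ is Cauchy in $L^2(\nu)$, the sequence $\{G_n\}$ is uniformly Cauchy on every compact in $\Omega$, hence converges to some $G\in\hol(\Omega)$; this $G$ vanishes on the dense open set $\Omega^*$, so $G\equiv 0$. This is cleaner and avoids any geometric choice. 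Second, your final a.e.\ argument (Cauchy $\Rightarrow$ limit $h$ exists, subsequences give $\psi^k h=0$ a.e., hence $h=0$ since $\nu(T\cap V)=0$) is a correct and careful justification of the last implication, which the paper states without elaboration.
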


\begin{proof}
  For the proof of part (1), fix a $k\in\N_0$ and suppose that  $\mathcal{A}(\Omega,\nu)$ is weakly admissible. We need to show that for any compact set $K\subset\Omega^*$, there exists
  a constant $c_K>0$ such that the evaluation operators
  \begin{align*}
    \Ev_z:\mathcal{A}_k(\Omega^*,\nu)&\longrightarrow\mathbb{C}\\
    F&\mapsto \Ev_z(F):=F(z),\quad z\in K,
  \end{align*}
  are uniformly bounded on $K$. 
  For that,  let $F\in\mathcal{A}_k(\Omega^*,\nu)$. Then $F=(\psi^{-k}G)|_{\Om^*\cup T^*}$ for some  $G\in\mathcal{A}(\Omega,\nu)$ and $F|_T\in L^2(\nu)$. Since $\mathcal{A}(\Omega,\nu)$ is weakly admissible and $K$ is compact in $\Om^*$, hence in $\Om$, it follows 
  that there exists a constant $C_K>0$ such that
  \begin{align*}
     \left|\Ev_z(G)\right|\leq C_K\left\|G|_T \right\|_{L^2(\nu)}
			\quad \forall z\in K.
  \end{align*}
Therefore, 
	\bes
		\left|\Ev_z(F)\right|=\left|\psi^{-k}(z)\right|\cdot|\Ev_z(G)|\leq C_K\left|\psi^{-k}(z)\right|\left\|G|_T \right\|_{L^2(\nu)}\quad \forall z\in K. 
	\ees
Since $K\subset\Omega^*$, $\psi$ is continuous and nonvanishing on $K$, and $\nu(V\cap T)=0$, there exists a constant $\widetilde{C_K}>0$ such that
  \bes
	\left|\Ev_z(F)\right|\leq\widetilde{C_K}\left\|G|_T \right\|_{L^2(\nu)}
		=\wt{C_k}\left\|(\psi^k\cdot F)|_T \right\|_{L^2(\nu)}
	\quad \forall z\in K.
	\ees
  As $\psi|_T$ is bounded and $F|_T\in L^2(\nu)$, there is a constant $c_K$ such that
  \bes
		\left|\Ev_z(F)\right|\leq c_K\left\|F|_T\right\|_{L^2(\nu)}.
	\ees
  This concludes the proof of part (1).
  
  To prove part (2), let $k\in\N_0$ and suppose that $\mathcal{A}(\Omega,\nu)$ is strongly admissible. Let $\left\{(F_n)\right\}_{n
  \in\mathbb{N}}\subset\mathcal{A}_k(\Omega^*,\nu)$ be a sequence such $\left\{F_n|_T\right\}_{n\in\N}$ is Cauchy in $L^2(\nu)$ and $F_n\longrightarrow 0$ uniformly on compacta in $\Omega^*$. Then for any $n\in\mathbb N$,
  $F_n=(\psi^{-k}G_n)|_{\Om^*\cup T^*}$ for some $G_n\in
  % A\subset
  \mathcal{A}(\Omega,\nu)$. Therefore,
  \begin{align*}
    \left\|\left(G_n-G_m \right)|_T \right\|_{L^2(\nu)}=\left\|(\psi^k\cdot F_n-\psi^k\cdot F_m)|_T \right\|_{L^2(\nu)}.
  \end{align*}
  Since $\psi$ is bounded on $T$, it follows that  $\left\{(G_n)|_T\right\}_{n\in\mathbb{N}}$ is a Cauchy sequence in $L^2(\nu)$. Furthermore, $\mathcal{A}(\Omega,\nu)$
  is weakly admissible, and so 
  for any compact set $K\subset\Omega$, there exists  a constant $C_K>0$ such that 
  \begin{align*}
    \left|G_n(z)-G_m(z)\right|\leq C_K\left\|(G_n-G_m)|_T \right\|_{L^2(\nu)}\;\;\forall z\in K,
  \end{align*}
  i.e., $\{G_n\}_{n\in\mathbb{N}}$ converges uniformly on compacta in $\Omega$. Thus, there exists a $G\in\mathcal{O}(\Omega)$ such that $G_n(z)\longrightarrow G(z)$ for all $z\in\Omega$ as $n\to\infty$. 
  However,
  for $z\in\Omega^*$, $G_n(z)=\psi^k(z)F_n(z)\longrightarrow 0$ as $n\to\infty$. Therefore, $G(z)=0$ for all $z\in \Omega^*$. 
  This implies that $G\equiv0$ on $\Omega$, and $G_n\longrightarrow 0$ uniformly on compacta in $\Omega$. 
  Since $\mathcal{A}(\Omega,\nu)$ is strongly admissible, 
  it follows that $(G_n)|_T\longrightarrow 0$ 
  in $L^2(\nu)$ as $n\to\infty$. This in turn implies that $(F_n)|_T\longrightarrow 0$ in $L^2(\nu)$ as $n\to\infty$. Thus, $\A_k(\Om^*,\nu)$ is strongly admissible. 
\end{proof}

We are now set to define the central objects of this discussion.

\begin{definition}
  Let $(\Om,\nu)$ be such that $\mathcal{A}(\Omega,\nu)$ is weakly admissible and $k\in\N_0$. {\em The $k$-th pre-Hardy space} $\mathfrak{H}^2_k(\Omega^*,\nu)$ is the closure of $\mathcal{A}_k(\Omega^*,\nu)|_T$ in $L^2(\nu)$. If $\A(\Om,\nu)$ is strongly admissible, we call $\h^2_k(\Om^*,\nu)$ the {\em $k$-th Hardy space} of $(\Om,\nu,V)$. 
\end{definition}

Note that 
	\bes
		\mathcal{A}_0(\Omega^*,\nu)=\mathcal{A}(\Omega,\nu)|_{\Om^*\cup T^*},
	\ees
i.e., $\A_0(\Om^*,\nu)$ does not lead to a new space. Furthermore,
\be\label{E:monotonicityA}
\mathcal{A}_{0}(\Omega^*,\nu)\subseteq\mathcal{A}_{1}(\Omega^*,\nu)\subseteq\ldots\subseteq\mathcal{A}_k(\Omega^*,\nu)\subseteq\ldots, 
\ee 
 and, for any $\ell\in\N_0$, the spaces
  $\psi^\ell\mathcal{A}_k(\Omega^*,\nu):=\left\{\psi^\ell\cdot F:F\in\mathcal{A}_k(\Omega^*,\nu)\right\}$ satisfy the inclusions
\be\label{E:pseudomonotonicityA}    \psi^{\ell}\mathcal{A}_k(\Omega^*,\nu)\subseteq\mathcal{A}_{k-\ell}(\Omega^*,\nu)\quad
  \text{whenever}\quad \ell\leq k.
\ee
The collection
$\{\mathfrak{H}_k^2(\Omega^*,\nu)\}_{k}$ inherits these properties. That is, $\mathfrak{H}_0^2(\Omega^*,\nu)=\mathfrak{H}^2(\Om,\nu)$. Furthermore,  
\begin{align}\label{E:monotonicityH}
 \mathfrak{H}_0^2(\Omega^*,\nu)\subseteq\mathfrak{H}_1^2(\Omega^*,\nu)\subseteq\dots\subseteq\mathfrak{H}_k^2(\Omega^*,\nu)\dots,
 \end{align}
as well as
\begin{align}\label{E:pseudomonotonicityH}  \psi^\ell\mathfrak{H}_k^2(\Omega^*,\nu)\subseteq\mathfrak{H}^2_{k-\ell}(\Omega^*,\nu)\quad \text{whenever}\quad  \ell\leq k.
\end{align}

Applying Proposition \ref{P:Riesz-bd} to $\mathcal{A}_k(\Omega^*,\nu)$, we see that $\mathfrak{H}^2_k(\Omega^*,\nu)$ possesses a Szeg\H o kernel $s_k $ for any $k\in\mathbb N_0$. Moreover,
the Szeg\H o kernel $s $ for
 $\mathfrak{H}^2(\Om,\nu)$ generates a family of kernels with the reproducing property for $\mathfrak{H}^2_k(\Omega^*,\nu)$.

\begin{Prop}\label{P:kernel}
     Let $(\Om,\nu)$ be such that $\A(\Omega,\nu)$ is weakly admissible. Let $\varphi\in \A(\Omega,\nu)$ be
    such that $\varphi=h\psi$ where $\psi$ is a minimal defining function of $V$ and $h\in\A(\Om,\nu)$ is nonvanishing on $\Om_T\setminus V$. Then 
     \begin{align}\label{E:ckdef}
      c_{k,\varphi}(z,w)
      :=\frac{\varphi^k(w)}{\varphi^k(z)}\,s(z,w)
      ,\;\;z\in\Omega^*,w\in T
      \end{align}
       has the reproducing property for $\mathfrak{H}^2_k(\Omega^*,\nu)$.
      Moreover, if $h$ is nowhere vanishing on $\Om_T$ and $|\varphi|$ is constant on $T$, then $c_{k,\varphi} $ is the Szeg{\H o} kernel for $\mathfrak{H}^2_k(\Omega^*,\nu)$ for all $k\in\N_0$. 
\end{Prop}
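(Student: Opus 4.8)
The plan is to verify the reproducing property directly for $F \in \A_k(\Om^*,\nu)$ and then extend by density and continuity of point evaluations, and finally to check the two defining properties of the Szeg\H o kernel under the extra hypotheses. First I would take $F \in \A_k(\Om^*,\nu)$, so that $F = (\psi^{-k} G)|_{\Om^* \cup T^*}$ for some $G \in \A(\Om,\nu)$. Writing $\varphi = h\psi$ with $h$ nonvanishing on $\Om_T \setminus V$, I would set $\wt G := h^{-k} G$; on $\Om^* \cup T^*$ this agrees with $\varphi^k F$, and since $h^{-k} \in \hol(\Om^* \cup T^*)$ is nonvanishing there, $\varphi^k F$ extends holomorphically across $V$ to a member of... well, not quite $\A(\Om,\nu)$ unless $h$ is globally nonvanishing — so here I would be a bit more careful and use that $\varphi^k F = \psi^k h^k F = \psi^k \cdot (h^k F)$, and that $h^k F = h^{k} \psi^{-k} G = \psi^{-k}(h^k G)$. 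In any case the clean move is: $\varphi^k F|_{T^*}$ and $\psi^k F|_{T^*} \cdot h^k|_{T^*}$ agree $\nu$-a.e., and the function $z \mapsto \varphi^k(z) F(z) = h^k(z)\psi^k(z)\psi^{-k}(z)G(z) = h^k(z) G(z)$ extends to an element of $\hol(\Om) \cap \cont(\Om_T) = \A(\Om,\nu)$ provided $h \in \A(\Om,\nu)$, which is exactly what is assumed. So $\varphi^k F$ is (the restriction of) an element of $\A(\Om,\nu)$.

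Now I would apply the reproducing property of $s$ for $\h^2(\Om,\nu)$ from Proposition~\ref{P:Riesz-bd} to the function $\varphi^k F \in \A(\Om,\nu)$: for $z \in \Om^* \subset \Om$,
\[
\varphi^k(z) F(z) = \int_T \varphi^k(w) F(w)\, s(z,w)\, d\nu(w).
\]
Dividing by $\varphi^k(z)$, which is nonzero for $z \in \Om^*$ since $\varphi = h\psi$ and both factors are nonvanishing there, gives
\[
F(z) = \int_T F(w)\, \frac{\varphi^k(w)}{\varphi^k(z)} s(z,w)\, d\nu(w) = \int_T F(w)\, c_{k,\varphi}(z,w)\, d\nu(w),
\]
i.e.\ $c_{k,\varphi}$ reproduces $\A_k(\Om^*,\nu)$. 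To pass to all of $\h^2_k(\Om^*,\nu)$, I would note that by Theorem~\ref{T:admissibleheir}(1), $\A_k(\Om^*,\nu)$ is weakly admissible, so $\Ev_z$ extends continuously to $\h^2_k(\Om^*,\nu)$; meanwhile, for fixed $z \in \Om^*$, the function $w \mapsto c_{k,\varphi}(z,w)$ need not itself lie in $\h^2_k(\Om^*,\nu)$, but the pairing $f \mapsto \int_T f(w) c_{k,\varphi}(z,w)\, d\nu(w)$ is a bounded linear functional on $\h^2_k(\Om^*,\nu)$ (since $\varphi^k|_T$ is bounded — $\psi, h$ are continuous on the compact $T$ — and $1/\varphi^k(z)$ is a fixed constant, so this is $\overline{s(z,\cdot)}$ paired against $\varphi^k f / \varphi^k(z) \in L^2(\nu)$). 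Two bounded functionals agreeing on the dense subspace $\A_k(\Om^*,\nu)|_T$ agree everywhere, which is the reproducing property on all of $\h^2_k(\Om^*,\nu)$.

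For the last assertion, assume $h$ is nonvanishing on all of $\Om_T$ and $|\varphi|$ is constant on $T$, say $|\varphi| \equiv \rho > 0$. By the characterization of the Szeg\H o kernel in Proposition~\ref{P:Riesz-bd}, it suffices to show $\overline{c_{k,\varphi}(z,\cdot)} \in \h^2_k(\Om^*,\nu)$ for every $z \in \Om^*$. I would write $c_{k,\varphi}(z,w) = \varphi^k(w)\,\varphi^k(z)^{-1} s(z,w)$; on $T$ we have $\overline{\varphi^k(w)} = \rho^{2k}/\varphi^k(w)$, so $\overline{c_{k,\varphi}(z,\cdot)} = \rho^{2k}\,\overline{\varphi^k(z)}^{-1}\, \varphi^{-k}(\cdot)\, \overline{s(z,\cdot)}$ on $T$. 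Since $\overline{s(z,\cdot)} \in \h^2(\Om,\nu) = \h^2_0(\Om^*,\nu)$, it is the $L^2(\nu)$-limit of $G_n|_T$ with $G_n \in \A(\Om,\nu)$; then $\varphi^{-k} G_n|_{\Om^* \cup T^*} = \psi^{-k}(h^{-k} G_n)|_{\Om^*\cup T^*}$ lies in $\A_k(\Om^*,\nu)$ (here I use $h^{-k} \in \A(\Om,\nu)$, which holds precisely because $h$ is globally nonvanishing on $\Om_T$), and $\varphi^{-k}|_T$ is bounded, so $\varphi^{-k} G_n|_T \to \varphi^{-k}\overline{s(z,\cdot)}$ in $L^2(\nu)$. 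Hence $\varphi^{-k}\overline{s(z,\cdot)} \in \h^2_k(\Om^*,\nu)$, and multiplying by the constant $\rho^{2k}\overline{\varphi^k(z)}^{-1}$ keeps us in the space. Combined with the reproducing property already established, uniqueness in Proposition~\ref{P:Riesz-bd} forces $c_{k,\varphi}$ to be the Szeg\H o kernel of $\h^2_k(\Om^*,\nu)$.

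The main obstacle I anticipate is the bookkeeping around where exactly the nonvanishing of $h$ is needed: for the mere reproducing property one only needs $h$ nonvanishing off $V$ together with $h \in \A(\Om,\nu)$ (so that $\varphi^k F$ and $\varphi^{-k}G$-type products land in the right algebras), whereas the Szeg\H o identification genuinely requires $h$ nonvanishing everywhere on $\Om_T$ so that $h^{-k} \in \A(\Om,\nu)$ and hence $\varphi^{-k}G \in \A_k(\Om^*,\nu)$; keeping these two regimes separate, and being careful that $\varphi^k|_T \in L^\infty(\nu)$ is what makes the density argument's functionals bounded, is the delicate part. The constancy of $|\varphi|$ on $T$ is used only in the very last step to convert $\overline{\varphi^k}$ into a scalar multiple of $\varphi^{-k}$ on $T$.
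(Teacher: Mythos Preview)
Your proof is correct and follows essentially the same route as the paper's: reduce to $\varphi^k F = h^k G \in \A(\Om,\nu)$, apply the reproducing property of $s$, divide by $\varphi^k(z)$, extend by density, and for the Szeg\H o identification use $\overline{\varphi^k} = \rho^{2k}\varphi^{-k}$ on $T$ together with $h^{-k}\in\A(\Om,\nu)$ to push approximants of $\overline{s(z,\cdot)}$ into $\A_k(\Om^*,\nu)$. Two cosmetic remarks: your early definition $\wt G := h^{-k}G$ has the wrong sign in the exponent (you correctly switch to $h^kG$ a few lines later), and you assert $\rho>0$ without comment---the paper pauses to note this follows from $h$ and $\psi$ being nonvanishing on $T^*\neq\emptyset$.
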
    

\begin{proof} Let $k\in\N_0$ and $F\in\mathcal{A}_k(\Omega^*,\nu)$. Then there is a $G\in\A(\Om,\nu)$ such that $F=(\psi^{-k}G)|_{\Om^*\cup T^*}$ and $F|_T\in L^2(\nu)$. Since $h^kG\in\A(\Om,\nu)$, it follows that for any $z\in\Om^*$  
\begin{align*}
    F(z)=\varphi^{-k}(z)\varphi^{k}(z)F(z)=\varphi^{-k}(z)h^k(z)G(z)
		=\varphi^{-k}(z)\int\limits_{T}\!\!h^k(w)G(w)\,s(z,w)\;d\nu(w).
  \end{align*}
Thus, 
	\bes
  F(z)=
	\int\limits_T F(w)\,c_{k, \varphi}(z,w)\, d\nu(w)\quad \text{for any}\ z\in\Om^*.
  \ees
  The reproducing property of $c_{k,\varphi} $ for $\h^2_k(\Om^*,\nu)$ then follows from the density of $\mathcal{A}_k(\Omega^*,\nu)|_T$ in $\mathfrak{H}^2_{k}(\Omega^*,\nu)$
  with respect to  $L^2(\nu)$.
  
 It remains to show that if $h$ is nonvanishing on $\Om_T$ and $|\varphi|$ equals some constant $c\geq 0$ on $T$, then $\overline{c_{k, \varphi}(z,.)}\in \mathfrak{H}^2_k(\Omega^*,\nu)$ for any $z\in\Om^*$. Note first that $c\neq 0$ since neither $h$ nor $\psi$ vanish on $T^*$. Thus, as an aside, observe that $\varphi$ does not vanish on $T$ and, in particular, $V\cap T=\emptyset$.
  Since $\overline{s(z,.)}\in \mathfrak{H}_k^2(\Omega,\nu)$
   for any $z\in\Omega$, it follows that
   there exists a sequence
    $\{S_n(z,.)\}_{n\in\N}$ such that $\overline{S_n(z,.)}\in \mathcal{A}(\Omega,\nu)$ for all $n\in\N$ and  
   \bes
		\big\|s(z,.)-S_n(z,.)|_T\big\|_{L^2(\nu)}\longrightarrow 0\;\;\text{as}\;\;n\to\infty,\;\;\forall\;z\in\Omega.
	\ees
This, and the fact that $\varphi^k(.)\varphi^{-k}(z)$ is bounded on $T$ for any fixed $z\in\Om^*$, implies that
	\bes
	 \big\|c_{k,\varphi}(z,.)-\left(\varphi^k(.)\varphi^{-k}(z)S_n(z,.)\right)|_T
		\big\|_{L^2(\nu)}
	\longrightarrow 0\;\;\text{as}\;\;n\to\infty,\;\;\forall\;z\in\Omega^*.
	\ees
To see that $\overline{\varphi^k(.)\varphi^{-k}(z)S_n(z,.)}$ is in $\A_k(\Om^*,\nu)$ for any $z\in\Om^*$, we first note $$\overline{\varphi^k(w)} = c^{2k}\varphi^{-k}(w)\;\forall\;w\in T.$$ It then suffices to show that $\varphi^{-k}(.)\overline{S_n(z,.)}$ is in $\A_k(\Om^*,\nu)$ for any $z\in\Om^*$. Since $h\in~\A(\Om,\nu)$ is nonvanishing on $\Om_T$, it follows that $h^{-k}(.)\overline{S_n(z,.)}\in\A(\Om,\nu)$. Thus, by the definition of $\A_k(\Om^*,\nu)$, it remains to show that  $\left(\psi^{-k}(.)h^{-k}(.)\overline{S_n(z,.)}\right)|_T$ is in $L^2(\nu)$. This membership holds because $\psi\cdot h=\varphi$ is a nonvanishing continuous function on $T$.  This concludes the proof of $c_{k,\varphi} $ being the Szeg{\H o} kernel for $\h^2_k(\Om^*,\nu)$. 
\end{proof} 

%%%%%%%%%%%%Start here%%%%%%%%%%%%%%%
\begin{remark}\label{R:repker}
Note that replacing the Szeg\H o kernel for $\mathfrak{H}^2(\Om,\nu)$ in \eqref{E:ckdef} with {\em any} other kernel with the reproducing property for $\mathfrak{H}^2(\Om,\nu)$, yields yet another family of kernels with the reproducing property for $\mathfrak{H}^2_k(\Omega^*,\nu)$.
\end{remark}

We briefly discuss the especially favorable situation when $V\cap T=\emptyset$.
In this case the requirement that $F|_{T^*}\in L^2(\nu)$ in the definition of $\A_k(\Om^*,\nu)$ is redundant. Moreover, the containment relations in \eqref{E:monotonicityA} and \eqref{E:monotonicityH} are strict, i.e.,
\bes
	\mathcal{A}_\ell(\Omega^*,\nu) \subsetneq \mathcal{A}_{k}(\Omega^*,\nu)\quad\text{and}\quad 
   \mathfrak{H}^2_\ell(\Omega^*,\nu)\subsetneq  \mathfrak{H}^2_{k}(\Omega^*,\nu),
  \ \ \text{when}\: \ell\leq k,\ \ell\in\N_0,
\ees
and those in \eqref{E:pseudomonotonicityA} and 
\eqref{E:pseudomonotonicityH} are equalities, i.e.,
\bes %\label{E:equality-relation}
  \psi^{\ell}\mathcal{A}_k(\Omega^*,\nu) = \mathcal{A}_{k-\ell}(\Omega^*,\nu)\quad\text{and}\quad 
   \psi^{\ell}\mathfrak{H}^2_k(\Omega^*,\nu) = \mathfrak{H}^2_{k-\ell}(\Omega^*,\nu),
  \ \ \text{when}\: \ell\leq k,\ \ell\in\N_0.
\ees
Theorem \ref{thm_eggs} provides examples of $(\Om, \nu, V)$ that exhibit the dual phenomenon,  i.e., the containments \eqref{E:monotonicityH} stabilize to equalities, while the containments in \eqref{E:pseudomonotonicityH} are strict. 

In the classical construction, the Hardy space $\mathfrak{H}^2(\Omega, \nu)$ is a module over the algebra $\mathcal{A}(\Omega,\nu)$. This phenomenon cannot percolate to $\h^2_k(\Omega^*,\nu)$ as, in general, $\mathcal{A}_k(\Omega^*,\nu)$ is not even an algebra. However, when $V\cap T=\emptyset$, the union
$\bigcup_{k=0}^\infty\mathcal{A}_k(\Omega^*,\nu)$
is a filtered algebra over $\C$ since 
$$\mathcal{A}_k(\Omega^*,\nu)\cdot\mathcal{A}_j(\Omega^*,\nu)	
\subseteq\mathcal{A}_{k+j}(\Omega^*,\nu),\quad j,k\in\N_0.$$
The space $\bigcup_{k=0}^\infty\mathfrak{H}_k^2(\Omega^*,\nu)$ is then a filtered module over this filtered algebra since 
$$\mathcal{A}_k(\Omega^*,\nu)\cdot\h^2_j(\Omega^*,\nu)	
\subseteq\h^2_{k+j}(\Omega^*,\nu),\quad j,k\in\N_0.$$

We now consider the general case, i.e., $V=V_1\cup \dots\cup V_m$, where each $V_j$ is an irreducible, minimally defined germ of an analytic hypersurface in $\overline\Om$. Let $\psi_j\in\mathcal{O}(\overline{\Omega})$ be a minimal defining function of $V_j$, $j\in\{1,\dots,m\}$. Then $\psi=\psi_1\cdot\ldots\cdot\psi_m\in \mathcal{O}(\overline{\Om})$ is a minimal defining function of $V$. One could proceed as in Definition~\ref{D:definitionAk} using $\psi$. However, this approach leads to an incomplete picture of the relevant spaces as each irreducible germ can independently yield a one-parameter family of spaces. For instance, consider the example $\Omega^*_P$ at the beginning of Section~\ref{S:planar}, and compare the spaces in \eqref{E:Akplanar} to the above definition where all the factors of $\psi$ would appear with the same exponent.

To remedy this issue we proceed inductively. We write $$\Omega^*_\ell=\Omega\setminus \left(V_1\cup\ldots\cup V_\ell\right),
	\quad  \ell\in\{1,\dots,m\},$$
and define $\A_{\bk}(\Om^*_1,\nu)$ as in Definition~\ref{D:definitionAk} for $\bk\in\N_0$. For $\ell\geq 2$, consider multi-indices
$\bk=\{k_1,\dots,k_\ell\}$ and $\bk'=\{k_1,\dots,k_{\ell-1}\}$ with $k_j\in\mathbb{N}_0$, and define
\beas
	\quad 
  \mathcal{A}_{\bk}(\Omega^*_\ell,\nu)
		:=\left\{ F:\Om^*\cup T^*\rightarrow\C:
		F=(\psi_{\ell}^{-k_\ell}G)|_{\Om^*\cup T^*}\ \text{for some}\  
  			G\in\mathcal{A}_{\bk'}(\Omega^*_{\ell-1},\nu)\quad\right.\\
		\left.\text{and}\ F|_{T^*}\in L^2(\nu)
		\right\}.
\eeas
The inductive nature of this definition allows for the iterated application of
 Theorem \ref{T:admissibleheir} and Proposition \ref{P:kernel}. In particular, if $(\Om,\nu)$ is such that $\A(\Om,\nu)$ is strongly admissible, then 
\begin{equation}\label{E:Hardy-ind}
\mathfrak{H}_{\bk}^2(\Omega^*,\nu):= \overline{\mathcal{A}_{\bk}(\Omega_m^*,\nu)|_T}^{\,L^2(\nu)}
\end{equation}
is a reproducing kernel Hilbert space on $\Om$ for any $\bk=\N_0^m$, and we call it the {\em $\bk$-th Hardy space} of $(\Om,\nu,V)$.

%%%%%%%%%%%%%%%%%%%%%%%%%%%%%%%%%%%%%%%%%%%%%%%%%%
%                    Planar domains
%%%%%%%%%%%%%%%%%%%%%%%%%%%%%%%%%%%%%%%%%%%%%%%%%%
\section{Planar domains}\label{S:planar}

In this section, we apply the scheme described in Subsection~\ref{SS:inheritance} to hypersurface-deleted planar domains. Note that the case of the punctured disk is covered in Subsection~\ref{sec_D*}. 

Recall that any hypersurface-deleted planar domain may  be written as 
	\bes
		\Om^*_P=\Om\setminus P,\quad P=\{p_1,...,p_m\}\subset\Om,
	\ees
see Definition~\ref{D:vardeldom} and the subsequent discussion. We henceforth refer to $\Om_P^*$ as an {\em $m$-punctured domain}. Here, we consider $\Om\Subset\C$ of class $\cont^{1,\alpha}$ for $\alpha\in(0,1)$, and the arc-length measure $\sigma$ on $b\Om$ so that $V\cap \supp(\sigma)=\emptyset$. Under these assumptions, $\A(\Om,\sigma)$ is strongly admissible. This is because $\A(\Om,\sigma)\subset E^2(\Om)$, the classical Smirnov--Hardy space of $\Om$, which is strongly admissible due to the existence of nontangential limits in $L^2(\sigma)$, see \cite[Theorem 10.3 \& Section 10.5]{Du70}. In fact, $\A(\Om,\sigma)|_{b\Om}$ is dense in $E^2(\Om)|_{b\Om}$, see \cite[Theorem 10.6 \& Section 10.5]{Du70}. Thus, the Hardy space $\h^2(\Om,\sigma)$ coincides with the classical Hardy space on $\Om$. Now we can either apply the inductive scheme of Section~\ref{SS:inheritance} or, equivalently, consider the closure in $L^2(\sigma)$ of the strongly admissible space of boundary values of 
	\be\label{E:Akplanar}
		\A_{\bk}(\Om^{*}_P,\sigma)=\{F\in\hol(\Om^{*}_P):(z-p_1)^{k_1}\cdot...\cdot(z-p_m)^{k_m}
F(z)\in\A(\Om,\sigma)\}
	\ee
	for $\bk=(k_1,...,k_m)\in\N_0^m$.
Either construction gives
 a family of Hardy spaces $\left\{\h^2_{\bk}(\Om^{*}_P,\sigma)\right\}_{\bk\in\N_0^m}$ such that 
		$$\h^2_{\bk}(\Om^{*}_P,\sigma)\subsetneq \h^2_{\bk'}(\Om^{*}_P,\sigma)\quad
		\text{whenever} \quad k_j\leq {k_j}',\ \ j\in\{1,...,m\}.$$
% $k_j\leq {k_j}'$, $j\in\{1,...,m\}$. 
 Note that each $\h^2_{\bk}(\Om^{*}_P,\sigma)$ is the space of $L^2$-boundary values of holomorphic functions on $\Om$ that have poles of orders at most $k_1,...,k_m$ at $p_1,...,p_m$, respectively. Applying Proposition~\ref{P:kernel} and Remark~\ref{R:repker} iteratively, we obtain the following result. 

%------------------------------------------------------
%---------------PROP-----------------------
%------------------------------------------------------
\begin{Prop}\label{prop_repker2} Let $\bphi=(\phi_1,...,\phi_m)\in \A(\Om,\sigma)^m$, $\bk=(k_1, \ldots, k_m) \in\N_0^m$ and $\bphi^{\pm\bk}=\phi_1^{\pm k_1}\cdot...\cdot\phi_m^{\pm k_m}$. Suppose each $\phi_j$ vanishes only at $p_j$, $j=1,...,m$. Suppose $c(z,w)$ is a kernel with the reproducing property for $\h^2(\Om,\sigma)$.
Then
 $$\bphi(z)^{-\bk}\,c(z,w)\,\bphi(w)^{\bk},\quad z\in \Om^*_P, w\in b\Om,
$$ 
has the reproducing property for $\h^2_{\bk}(\Om^*_P,\sigma)$. Further, if each $\phi_j$ has a simple zero at $p_j$ and $|\phi_j|$ is constant on $b\Om$, then
	\bes
		c_{\bk,\bphi}(z,w):=\frac{\bphi(w)^{\bk}}{\bphi(z)^{\bk}}s(z,w),
		\quad z\in \Om^*_P, w\in b\Om,
	\ees
is the Szeg{\H o} kernel for  $\h^2_{\bk}(\Om^*_P,\sigma)$ for all $\bk\in\N_0^m$.
\end{Prop}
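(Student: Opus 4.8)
The plan is to deduce Proposition~\ref{prop_repker2} from Proposition~\ref{P:kernel} and Remark~\ref{R:repker} by peeling off one puncture at a time, exactly as the inductive definition of $\A_{\bk}(\Om^*_\ell,\sigma)$ suggests. First I would set up the induction on the number of deleted points $m$, writing $\Om^*_\ell=\Om\setminus\{p_1,\dots,p_\ell\}$ and noting that at each stage $\psi_\ell(z):=z-p_\ell$ is a minimal defining function for the single point $p_\ell$ in $\hol(\overline{\Om^*_{\ell-1}})$ (it is holomorphic on a neighborhood of $\overline\Om$, vanishes to first order, and locally generates the ideal of functions vanishing at $p_\ell$), and that $\phi_\ell=h_\ell\psi_\ell$ for some $h_\ell\in\A(\Om,\sigma)$ which, by hypothesis, vanishes nowhere on $\overline\Om\setminus\{p_\ell\}$, i.e.\ nowhere on $(\Om^*_{\ell-1})_{b\Om}\setminus\{p_\ell\}$. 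This is precisely the hypothesis on $\varphi$ in Proposition~\ref{P:kernel}, with the parent space being $(\Om^*_{\ell-1},\sigma)$, whose associated algebra $\A_{\bk'}(\Om^*_{\ell-1},\sigma)$ is weakly (indeed strongly) admissible by Theorem~\ref{T:admissibleheir}.

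For the reproducing-property assertion, I would argue by induction: assume $c'(z,w)$ has the reproducing property for $\h^2_{\bk'}(\Om^*_{\ell-1},\sigma)$, where $\bk'=(k_1,\dots,k_{\ell-1})$. Applying the first part of Proposition~\ref{P:kernel} (in the form of Remark~\ref{R:repker}, which allows any reproducing kernel in place of the Szeg\H o kernel) with $\varphi=\phi_\ell$ and exponent $k_\ell$ shows that $\phi_\ell(z)^{-k_\ell}c'(z,w)\phi_\ell(w)^{k_\ell}$ has the reproducing property for $\h^2_{(\bk',k_\ell)}(\Om^*_\ell,\sigma)$. Starting the induction from $c'(z,w)=c(z,w)$ on $\h^2(\Om,\sigma)=\h^2_{\mathbf 0}(\Om^*_0,\sigma)$ and iterating $m$ times yields that $\bphi(z)^{-\bk}c(z,w)\bphi(w)^{\bk}$ reproduces $\h^2_{\bk}(\Om^*_P,\sigma)$, since the accumulated prefactor telescopes to $\prod_{j=1}^m(\phi_j(w)/\phi_j(z))^{k_j}$.

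For the Szeg\H o-kernel statement, I would again induct, this time invoking the second part of Proposition~\ref{P:kernel}: its conclusion requires the factor $h$ (here $h_\ell=\phi_\ell/\psi_\ell$) to be nonvanishing on all of $(\Om^*_{\ell-1})_{b\Om}$ and $|\varphi|$ (here $|\phi_\ell|$) to be constant on $b\Om$. The hypothesis that each $\phi_j$ has a \emph{simple} zero at $p_j$ (and vanishes nowhere else on $\overline\Om$) guarantees $h_\ell$ is nonvanishing everywhere on $\overline\Om$, and constancy of $|\phi_j|$ on $b\Om$ is assumed; these conditions are stable under the induction because the relevant set $T=b\Om$ never changes and is disjoint from every $V_j$. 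Hence at each step $c_{k_\ell,\phi_\ell}$ built from the previous Szeg\H o kernel is again the Szeg\H o kernel, and after $m$ steps starting from $c(z,w)=s(z,w)$ we get that $\bphi(w)^{\bk}\bphi(z)^{-\bk}s(z,w)$ is the Szeg\H o kernel for $\h^2_{\bk}(\Om^*_P,\sigma)$.

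The only genuinely delicate point is bookkeeping: one must check that the intermediate parent spaces $(\Om^*_{\ell-1},\sigma)$ legitimately satisfy hypotheses $(i)$–$(iii)$ preceding Theorem~\ref{T:admissibleheir} at every stage—in particular that $\{p_\ell\}$ is an irreducible minimally defined germ in $\overline{\Om^*_{\ell-1}}$ disjoint from $T=b\Om$, which is immediate, and that the measure $\sigma$ puts no mass on the removed point, which is trivial since $\sigma$ is supported on $b\Om$—and that the notation $\A_k(\Om^*,\nu)$ in Proposition~\ref{P:kernel} matches the multi-index spaces $\A_{(\bk',k_\ell)}(\Om^*_\ell,\sigma)$ of the inductive construction. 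I expect no real obstacle beyond this verification, since all analytic content is already contained in Proposition~\ref{P:kernel}.
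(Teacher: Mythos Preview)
Your proposal is correct and is exactly the approach the paper takes: the paper simply states that Proposition~\ref{prop_repker2} follows by ``applying Proposition~\ref{P:kernel} and Remark~\ref{R:repker} iteratively,'' and your write-up is a careful execution of that induction on the number of punctures, with the right verifications (minimality of $\psi_\ell(z)=z-p_\ell$, nonvanishing of $h_\ell$ on the relevant set, and constancy of $|\phi_\ell|$ on $T=b\Om$) at each step. The bookkeeping point you flag---that the intermediate spaces $\A_{\bk'}(\Om^*_{\ell-1},\sigma)$ serve as the admissible ``parent'' algebras for the next application of Proposition~\ref{P:kernel}---is precisely what the paper's inductive definition at the end of Subsection~\ref{SS:inheritance} is designed to accommodate.
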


In addition to the Szeg{\H o} kernel, we discuss a generalization of the Cauchy kernel for $\h^2_{\bk}(\Om^*_P,\sigma)$, $\bk\in\N_0^m$. Recall that the classical Cauchy kernel 
$$ \cont(z,w)=
%\cont(z,w)=
\frac{1}{2\pi i}\,\frac{1}{w-z}$$ is a holomorphic function on $\C\times\C\setminus\{z=w\}$ such that 
%$$\frac{j^*\big(\cont(z,w)dw\big)}{d\sigma(w)}$$
$$\frac{j^*\big(\cont (z,w)dw\big)}{d\sigma(w)}$$
 has the reproducing property for $\h^2(\Om,\sigma)$, where $j:b\Om\rightarrow\C$ is the inclusion map. Applying Proposition~\ref{prop_repker2} to this kernel, we obtain the following analog of the Cauchy integral formula for $m$-times punctured domains
	\bes
		F(z)
		=\frac{1}{2\pi i}\int_{b\Om} \underbrace{\frac{(w-p_1)^{k_1}\cdots (w-p_m)^{k_m}}{(z-p_1)^{k_1}\cdots (z-p_m)^{k_m}(w-z)}}_{=:2\pi i\,\cont_{\bk}(z,w)}F(w)\,dw
		%,\quad F\in\A_{\bk}(\Om^*_P,\sigma).  
	\ees 
for $F\in\A_{\bk}(\Om^*_P,\sigma)$ and $z\in\Om^*_P$.
We call $\cont_{\bk}(z,w)$ the {\em Cauchy $\bk$-kernel for $m$ punctures}. Note that it is a meromorphic function on $\C\times\C\setminus\{z=w\}$ whose poles depend solely on the location of the punctures. When written with respect to $\sigma$, the integral kernel in the above formula is, in fact, 
$$\cont^{\Om^*_P}_{\bk}(z,w):=\cont_{\bk}(z,w)\dot{\gamma}(w),$$
where $w=\gamma(t)$ is the arc-length parametrization of $b\Om$. It follows that $\cont^{\Om^*_P}_{\bk}(z,w)$ has
 the reproducing property for $\h^2_{\bk}(\Om^*_P,\sigma)$. In contrast to $\cont^{\Om^*_P}_{\bk} $, the Szeg{\H o} kernel, $s_{\bk} $, of $\h^2_{\bk}(\Om^*_P,\sigma)$ is, in general, not known explicitly. However, for simply connected $\Om$, Theorem \ref{T:rigidity} below gives a formula for $s_{\bk} $ in terms of the Szeg\H o kernel for  $\h^2(\Om,\sigma)$. It also shows that the two kernels, $s_{\bk}$ and $\cont^{\Om^*_P}_{\bk}$, coincide if and only if $\Om^*_P$ is a disk punctured at its center. This rigidity result extends the Kerzman--Stein Lemma (\cite[Lemma~7.1]{KeSt78}) to the case of $m$-punctured domains.

%------------------------------------------------------
%---------------RIGIDITY THEOREM-----------------------
%------------------------------------------------------
\begin{theorem}\label{T:rigidity}
 Let $\Om\Subset\C$ be a $\cont^{1,\alpha}$-smooth simply connected domain, and $P=\{p_1,...,p_m\}\subset\Om$. Let $\mu:\Om\rightarrow\D$ be a biholomorphism with
%Riemann map
$q_j=\mu(p_j)$, $j=1,...,m$.  
	\begin{enumerate}
		\item The Szeg{\H o} kernel for $\h^2_{\bk}(\Om^*_P,\sigma)$ is given by
			\bes
	s_{\bk}(z,w)=\bphi_0^{-\bk}(z)\,s(z,w)\,\overline{\bphi_0^{-\bk}(w)},
				\quad z\in\Om^*_P, w\in b\Om,
		\ees
where, $\bphi_0=\left(M_{q_1}\circ\mu,...,M_{q_m}\circ\mu\right)$ for $M_q(\zt)=\dfrac{\zt-q}{1-\overline q\zt}$, $(q,\zt)\in\D\times \overline\D$.
		\item $\cont^{\Om^*_P}_{\bk}(z,w)=s_{\bk}(z,w)$ for some $\bk\in\N_0^m$ if and only if $\Om^*_P$ is a disk punctured at its center. 
	\end{enumerate}
\end{theorem}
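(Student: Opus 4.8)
The plan is to prove part (1) first and then deduce part (2) by combining it with the classical Kerzman--Stein rigidity lemma. For part (1), I would use the biholomorphism $\mu:\Om\to\D$ to transfer the problem to the disk. The key observation is that $\mu$ maps $\Om^*_P$ biholomorphically onto $\D\setminus\{q_1,\dots,q_m\}$, and the functions $\phi_{0,j}=M_{q_j}\circ\mu$ are in $\A(\Om,\sigma)$, each with a simple zero at $p_j$. The crucial point is that $|\phi_{0,j}|$ is constant (equal to $1$) on $b\Om$: indeed, $M_{q_j}$ is a Blaschke factor, so $|M_{q_j}|\equiv 1$ on $b\D$, and $\mu$ maps $b\Om$ to $b\D$. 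Hence $\bphi_0=(\phi_{0,1},\dots,\phi_{0,m})$ satisfies exactly the hypotheses of the second part of Proposition~\ref{prop_repker2}: each $\phi_{0,j}$ has a simple zero only at $p_j$ and $|\phi_{0,j}|$ is constant on $b\Om$. Applying that proposition directly gives
\[
  s_{\bk}(z,w)=\frac{\bphi_0(w)^{\bk}}{\bphi_0(z)^{\bk}}\,s(z,w)
\]
as the Szeg\H o kernel for $\h^2_{\bk}(\Om^*_P,\sigma)$; rewriting $\bphi_0(w)^{\bk}=\overline{\bphi_0(w)^{-\bk}}$ for $w\in b\Om$ (using $|\phi_{0,j}(w)|=1$) yields the stated formula.

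For part (2), one direction is a direct verification: if $\Om^*_P$ is a disk punctured at its center, say $\Om=\D$ and $P=\{0\}$ (after a rotation/scaling), then $\mu=\mathrm{id}$, $q_1=0$, $M_0(\zt)=\zt$, so $\bphi_0(z)=z$, and the formula in part (1) gives $s_{\bk}(z,w)=\frac{w^k}{z^k}s(z,w)$, which is precisely the kernel $s_k$ computed in Subsection~\ref{sec_D*}; one then checks that $\cont^{\Om^*_P}_{\bk}(z,w)$ equals this, exactly as in the unpunctured case where the Cauchy and Szeg\H o kernels for the disk agree. For the converse, suppose $\cont^{\Om^*_P}_{\bk}(z,w)=s_{\bk}(z,w)$ for some $\bk$. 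Here I would strip off the $\bphi_0$ factors: by the explicit description of $\cont^{\Om^*_P}_{\bk}$ as the classical Cauchy kernel multiplied by $\frac{(w-p_1)^{k_1}\cdots(w-p_m)^{k_m}}{(z-p_1)^{k_1}\cdots(z-p_m)^{k_m}}$ together with $\dot\gamma(w)$, and by part (1), the equality $\cont^{\Om^*_P}_{\bk}=s_{\bk}$ should be shown to be equivalent to the corresponding equality at level $\bk=\mathbf 0$, namely that the classical Cauchy kernel for $\Om$ coincides with the Szeg\H o kernel for $\h^2(\Om,\sigma)$. The point is that the puncture-dependent factors on the Cauchy side and the $\bphi_0$-factors on the Szeg\H o side must match as meromorphic functions, which forces $\phi_{0,j}(z)^{k_j}$ and $(z-p_j)^{k_j}$ (times the corresponding anti-holomorphic pieces) to be proportional with a constant of modulus one — and after cancelling these matching factors, what remains is exactly the unpunctured Kerzman--Stein identity $\cont^{\Om}=s$. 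By the Kerzman--Stein Lemma (\cite[Lemma~7.1]{KeSt78}), this holds if and only if $\Om$ is a disk; and tracking through the factor-matching, the puncture $p_j$ must then coincide with the center, so $\Om^*_P$ is a disk punctured at its center.

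\textbf{Main obstacle.} The routine parts are the transfer to the disk and the direct application of Proposition~\ref{prop_repker2} for part (1). The delicate step is the converse in part (2): one must justify that the equality $\cont^{\Om^*_P}_{\bk}=s_{\bk}$ can be \emph{reduced} to the base case $\cont^{\Om}=s$. This requires carefully separating, in the identity of two meromorphic-in-$z$ functions, the contribution of the puncture-dependent rational factors from the contribution of the "core" kernel, and arguing that no nontrivial cancellation between the two can occur (e.g., using that the poles of $\cont^{\Om^*_P}_{\bk}$ in $z$ are exactly at the $p_j$ with the prescribed orders, matched against the zeros of $\bphi_0(z)^{\bk}$, which are also exactly at the $p_j$ with the same orders, so the holomorphic non-vanishing remainders must agree up to a constant). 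Once this reduction is in place, invoking the original Kerzman--Stein lemma closes the argument; one should also double-check the $m\geq 2$ bookkeeping, since a priori different punctures could be handled by factors with different exponents, but the hypothesis fixes a single multi-index $\bk$ and the zero of $M_{q_j}\circ\mu$ at $p_j$ is simple, so the matching is forced coordinate by coordinate.
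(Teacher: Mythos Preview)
Your treatment of part (1) is correct and essentially identical to the paper's: verify that each $\phi_{0,j}=M_{q_j}\circ\mu$ lies in $\A(\Om,\sigma)$, has a simple zero only at $p_j$, and satisfies $|\phi_{0,j}|\equiv 1$ on $b\Om$, then apply Proposition~\ref{prop_repker2} and rewrite $\bphi_0(w)^{\bk}=\overline{\bphi_0(w)^{-\bk}}$ on $b\Om$.

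For part (2), however, your proposed reduction has a genuine gap. You claim that the equality $\cont^{\Om^*_P}_{\bk}=s_{\bk}$ can be reduced to $\cont^{\Om}=s$ by matching the puncture-dependent factors, arguing that $\phi_{0,j}(z)^{k_j}$ and $(z-p_j)^{k_j}$ must be proportional by a unimodular constant. This is false: both vanish to order $k_j$ at $p_j$, so their ratio $h_j(z)=(z-p_j)/\phi_{0,j}(z)$ is nonvanishing and holomorphic on $\overline\Om$, but it is \emph{not constant} unless $\mu$ is affine, i.e., unless $\Om$ is already a disk. So after ``stripping'' you are left with $H(w)H(z)^{-1}\cont^{\Om}(z,w)=s(z,w)$ for a nonconstant $H$, not with $\cont^{\Om}=s$. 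The reduction you describe is circular.

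The paper avoids this by a different route: it does \emph{not} reduce to the unpunctured Kerzman--Stein identity, but instead runs the Kerzman--Stein geometric argument directly at level $\bk$. The key input is that $s_{\bk}$ extends continuously to $(\overline\Om\setminus P)^2\setminus\{(z,z):z\in b\Om\}$ and is Hermitian there, so $\cont^{\Om^*_P}_{\bk}=s_{\bk}$ forces $\cont^{\Om^*_P}_{\bk}(z,w)=\overline{\cont^{\Om^*_P}_{\bk}(w,z)}$ for $z,w\in b\Om$. Writing this out explicitly with the factor $e(z,w)=\prod_j(w-p_j)^{k_j}/\prod_j(z-p_j)^{k_j}$ yields the equal-angle chord condition on $b\Om$, hence $\Om$ is a disk; the remaining condition $|e(z,w)|\equiv 1$ on the circle then forces $|(w-p_1)\cdots(w-p_m)|$ to be constant on $b\Om$, which pins $P$ to the center. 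Your approach can be repaired by exploiting Hermitian symmetry at level $\bk$ rather than attempting to cancel factors first.
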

%------------------------------------------------------
To prove Theorem~\ref{T:rigidity}, we use the fact that the Szeg{\H o} kernel $s $ of $\h^2(\Om,\sigma)$ is $S |_{\Om\times b\Om}$, where $S $ is the continuous extension of the Szeg{\H o} kernel for $E^2(\Om)$ to $\overline\Om\times\overline\Om\setminus\{(z,z):z\in b\Om\}$. Note that $S(z,w)=\overline{S(w,z)}$ for $z,w\in\overline\Om\times\overline\Om\setminus\{(z,z):z\in b\Om\}$. The continuous extension of the Szeg{\H o} kernel for $E^2(\Om)$ follows from three facts. Firstly, this is true for the classical Szeg{\H o} kernel $S^\D $ of the disk. Secondly, the derivative of any biholomorphism $\beta$ from $\Om$ onto $\D$ admits a continuous nonvanishing square root on $\overline\Om$, see \cite[Theorem 3.5]{Po13}. And lastly, the Szeg{\H o} kernel for $E^2(\Om)$ can be expressed in terms of $S^\D$ and $\sqrt{\beta'}$, see the transformation law in \cite[Lemma 5.3]{La98}.

%------------------------------------------
{\em Proof of Theorem~\ref{T:rigidity}.} Since $\mu$ extends continuously to $\overline\Om$, we have that $\bphi_0\in\A(\Om,\sigma)^m$. Moreover, since $|M_q|\equiv 1$ on $b\D$, and $M_q$ only has a simple zero at $q$, the same is true of each $\phi_j$ on $b\Om$ and at $p$, respectively. Thus, by Proposition~\ref{prop_repker2}, $c_{\bk,\bphi_0}$ is the Szeg{\H o} kernel for $\h^2_{\bk}(\Om^*_P,\sigma)$. Since $M_{q}\circ\mu=\overline{(M_{q}\circ\mu)^{-1}}$ on $b\Om$, the first claim follows.  

Next, observe that $S_{\bk}(z,w)
=\bphi_0(z)^{-\bk}S(z,w)\,\overline{\bphi_0(w)^{-\bk}}$ extends $s_{\bk}$ continuously to $(\overline\Om\setminus P)^2\setminus\{(z,z):z\in b\Om\}$, and $S_{\bk}(z,w)=\overline{S_{\bk}(w,z)}$. Thus, if $\cont_{\bk}^{\Om^*_P}=s_{\bk}$, it must be that for $z,w\in b\Om$, $z\neq w$,
	\be\label{eq_C=S}
		\cont_{\bk}^{\Om^*_P}(S(z,w))-\overline{\cont_{\bk}^{\Om^*_P}(S(z, w))}=
		\frac{1}{2\pi i}\frac{e(S(z,w))}{w-z}\left(\dot\gamma(w)
	-\frac{1}{|e(S(z,w))|^2}\wt{\dot\gamma(z)}\right)=0,
	\ee
where $e(S(z,w))=\dfrac{(w-p_1)^{k_1}\cdots (w-p_m)^{k_m}}{(z-p_1)^{k_1}\cdots (z-p_m)^{k_m}}$, and $\wt{\dot\gamma(z)}=\overline{\dot\gamma(z)}\dfrac{w-z}{\wbar-\zbar}$ is the vector obtained from reflecting $\dot\gamma(z)$ in the chord determined by $w$ and $z$. Thus, as in the proof of the classical Kerzman--Stein Lemma, \eqref{eq_C=S} implies that for any two distinct points $z,w\in b\Om$, the chord connecting $w$ and $z$ meets the boundary curve with the same angle at both points. But this can only happen if $b\Om$ is a circle \cite{RaTo57}, i.e., $\Om =\mathbb D_r(a)=\{z\in\C:|z-a|<r\}$ for some $a\in\C$ and $r>0$. In this case $|\dot\gamma(w)|=|\wt{\dot\gamma(z)}|$ for all $z,w\in b\mathbb D_r(a)$, and so $|e(S(z,w))|\equiv 1$ for $z,w\in b\mathbb D_r(a)$. If $\bk\in\N_0^m$, this yields that $|(w-p_1)\cdots(w-p_m)|$ is constant on $b\mathbb D_r(a)$, which is only possible if $P=\{a\}$. 
\qed
%------------------------------------------------------

Theorem~\ref{T:rigidity} is stated only for simply connected domains because of the limited applicability of Proposition~\ref{prop_repker2}. In particular, if $\Om$ is multiply connected, then the conditions on $\phi_j$, assumed in Proposition~\ref{prop_repker2}, may not be attainable. For example, if $\Om=\{z\in\C:1<|z|<2\}$ and $V=\{a\}$ for some $a\in\Om$, then there is no $\phi\in\A(\Om,\sigma)$ that has a simple zero at $a$ and is such that $|\phi|\equiv C $ on $b\Om$. This is because, owing to the argument and maximum principles, $N(\xi):=\frac{1}{2\pi i}\int_{b\Om}\frac{\phi'(w)}{\phi(w)-\xi}dw$
%{\phi'(w)-\xi}dw$
 is a continuous, integer-valued function on $\D_C(0)$ and hence a constant. If $\phi$ had a simple zero, then $N\equiv 1$ on $\D_{C}(0)$, forcing $\phi$ to be a homeomorphism between $\Om$ and $\D_C(0)$, which is impossible. 
 
However, in the case when $\Om$ is finitely connected, the Szeg{\H o} kernel for $\h^2_{\bk}(\Om^*_P,\sigma)$ enjoys a transformation law under biholomorphisms. The proof goes along classical arguments in \cite[Ch.~12]{Be92} and \cite[Lemma~5.3]{La98}, after taking into account the boundary regularity of conformal maps between $\cont^{1,\alpha}$-smooth domains, see \cite[App. A]{BaFiLe14}.

%------------------------------------------------------
%------------------Trasnformation Law------------------
%------------------------------------------------------
\begin{theorem} Suppose $\Om,D\Subset\C$ are $\cont^{1,\alpha}$-smooth domains, and $\mu:\Om\rightarrow D$ is a biholomorphism. Then, for $\bk\in\N_0^m$, 
	\bes
		s_{\bk}^{\Om_P^*}(z,w)=\sqrt{\mu'(z)}\, \left(s_{\bk}^{D^*_{\mu(P)}}\left(\mu(z),\mu(w)\right)\right)\,\overline{\sqrt{\mu'(w)}},\quad
		z\in\Om_P^*, w\in b\Om,
	\ees
where $s_{\bk}^{\Om_P^*} $ and $s_{\bk}^{D_{\mu(P)}^*} $ denote the Szeg{\H o} kernels for $\h^2_{\bk}(\Om_P^*,\sigma)$ and $\h^2_{\bk}(D_{\mu(P)}^*,\sigma)$, respectively. 
\end{theorem}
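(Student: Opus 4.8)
The plan is to reduce the multiply-punctured transformation law to the classical transformation law for the Szeg\H o kernel of the parent (unpunctured) domain, using the description of $\h^2_{\bk}(\Om^*_P,\sigma)$ as $L^2$-boundary values of holomorphic functions on $\Om$ with poles of order at most $k_j$ at $p_j$. First I would recall the classical conformal transformation law: for a biholomorphism $\mu:\Om\rightarrow D$ between $\cont^{1,\alpha}$-smooth domains, $\mu'$ extends continuously and non-vanishingly to $\overline\Om$ (see \cite[App. A]{BaFiLe14}), hence $\sqrt{\mu'}$ admits a continuous, non-vanishing branch on $\overline\Om$; and the map $U_\mu F := \sqrt{\mu'}\,(F\circ\mu)$ is, up to the constant normalization, a unitary isomorphism $E^2(D)\rightarrow E^2(\Om)$ whose boundary version intertwines the respective Szeg\H o projections, so that $s^{\Om}(z,w)=\sqrt{\mu'(z)}\,s^{D}(\mu(z),\mu(w))\,\overline{\sqrt{\mu'(w)}}$; this is exactly \cite[Lemma 5.3]{La98} combined with \cite[Ch.~12]{Be92}.

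Next I would show that $U_\mu$, suitably interpreted on punctured domains, carries $\h^2_{\bk}(D^*_{\mu(P)},\sigma)$ isometrically onto $\h^2_{\bk}(\Om^*_P,\sigma)$. The key observation is that if $F\in\hol(D^*_{\mu(P)})$ has a pole of order at most $k_j$ at $\mu(p_j)$, then $F\circ\mu\in\hol(\Om^*_P)$ has a pole of order at most $k_j$ at $p_j$ (because $\mu$ is biholomorphic, $\mu(p_j)=q_j$, and $\mu-q_j$ vanishes to first order at $p_j$), and multiplication by $\sqrt{\mu'}$, being holomorphic and non-vanishing near each $p_j$, does not change the pole order. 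Thus $U_\mu$ maps $\A_{\bk}(D^*_{\mu(P)},\sigma)$ into $\A_{\bk}(\Om^*_P,\sigma)$; applying the same argument to $\mu^{-1}$ gives surjectivity. On the boundary, the change-of-variables formula $d\sigma_{b\Om}=|\mu'|\,d\sigma_{bD}\circ\mu$ together with $|\sqrt{\mu'}|^2=|\mu'|$ shows that the boundary version of $U_\mu$ is an isometry of $L^2(\sigma)$-spaces, hence it restricts to an isometric isomorphism $\h^2_{\bk}(D^*_{\mu(P)},\sigma)\rightarrow\h^2_{\bk}(\Om^*_P,\sigma)$ of the closures.

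With this unitary equivalence in hand, I would invoke the uniqueness of the Szeg\H o kernel (Proposition~\ref{P:Riesz-bd}): the reproducing kernel transforms under a unitary intertwining the evaluation functionals by the general formula $s^{\Om}_{\bk}(z,w)=\sqrt{\mu'(z)}\,s^{D}_{\bk,\mu}(\mu(z),\mu(w))\,\overline{\sqrt{\mu'(w)}}$, where one must check that the unitary $U_\mu$ intertwines $\Ev_z$ on $\h^2_{\bk}(\Om^*_P,\sigma)$ with $\sqrt{\mu'(z)}\,\Ev_{\mu(z)}$ on $\h^2_{\bk}(D^*_{\mu(P)},\sigma)$; this is immediate on the dense subspaces $\A_{\bk}$ from the definition of $U_\mu$ and passes to the closure by boundedness of evaluations (Theorem~\ref{T:admissibleheir}(1)). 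Substituting $s^{D}_{\bk,\mu}=s^{D^*_{\mu(P)}}_{\bk}$ yields the claimed identity. The main obstacle, and the point requiring the most care, is the boundary regularity input: one needs the continuous non-vanishing extension of $\mu'$ and of $\sqrt{\mu'}$ to $\overline\Om$ in the $\cont^{1,\alpha}$ category so that $U_\mu$ genuinely maps $\cont(\overline{\Om}\setminus P)$-type spaces to each other and is an $L^2(\sigma)$-isometry — this is precisely why the hypothesis $\cont^{1,\alpha}$ is imposed and why one cites \cite[App. A]{BaFiLe14}; everything else is a formal transport of structure along $U_\mu$.
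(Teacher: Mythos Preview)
Your proposal is correct and is precisely the classical argument the paper has in mind: the paper's own proof is nothing more than the sentence ``goes along classical arguments in \cite[Ch.~12]{Be92} and \cite[Lemma~5.3]{La98}, after taking into account the boundary regularity of conformal maps \cite[App.~A]{BaFiLe14},'' and you have accurately unpacked exactly those ingredients (continuous non-vanishing extension of $\sqrt{\mu'}$, the unitary $U_\mu F=\sqrt{\mu'}\,(F\circ\mu)$ on $L^2(\sigma)$, preservation of pole orders, and transport of reproducing kernels under a unitary). One cosmetic slip: the change-of-variables reads $\mu^*d\sigma_{bD}=|\mu'|\,d\sigma_{b\Om}$, not $d\sigma_{b\Om}=|\mu'|\,d\sigma_{bD}\circ\mu$, but your isometry conclusion is unaffected.
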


%%%%%%%%%%%%%%%%%%%%%%%%%%%%%%%%%%%%%%%%%%
%                                 Stabilization Example
%%%%%%%%%%%%%%%%%%%%%%%%%%%%%%%%%%%%%%%%%%

\section{Hypersurface-deleted egg domains as examples of finite stabilization}\label{S:Stabilization}

In this section, we consider triples of the form $\big(\E_{p},\nu,\{z_2=0\}\big)$, $p\in\N$, where 
	\be\label{eq_eggs}
		\E_{p}=\big\{(z_1,z_2)\in\CC:|z_1|^{2p}+|z_2|^{2p}<1\big\},
	\ee
and the measure $\nu$ on $b\E_p$ is either
\begin{itemize}
	\item [$(a)$] $\sigma$, the Euclidean surface area measure, or
	\item [$(b)$] $\om_p$, the Monge--Amp{\` e}re boundary measure associated to the exhaustion function  $$\varphi_p(z_1,z_2)=\frac{1}{2p}\log\left(|z_1|^{2p}+|z_2|^{2p}\right).$$ 
\end{itemize}
Note that $\om_p$ is also the Leray--Levi measure associated to the defining function $$\rho_p(z_1,z_2)=\frac{2\pi}{p}\left(|z_1|^{2p}+|z_2|^{2p}-1\right).$$
In the case of the ball, or $p=1$, the two measures coincide and $\mathfrak{H}^2(\E_1,\sigma)=\mathfrak{H}^2(\E_1,\omega_1)$. In all other cases, $\mathfrak{H}^2(\E_p,\sigma)\subsetneq \mathfrak{H}^2(\E_p,\omega_p)$. We show that this discrepancy, owing to different choices of measure, is amplified in the case of $\E_p^*=\E_p\setminus\{z_2=0\}$. Moreover, this setting yields examples of nontrivially stabilizing filtrations of Hardy spaces.

For some context, note that $\mathfrak{H}^2(\E_p,\sigma)$ is the space of boundary values of the classical Hardy space on $\E_p$ as defined by Stein in~\cite{St15}, while $\mathfrak{H}^2(\E_p,\om_p)$ is the space of boundary values of the Poletsky--Stessin Hardy space associated to $\varphi_p$ on $\E_p$, see \cite{PoSt08}. The latter spaces have been studied by Hansson in \cite{Ha99}, {\c S}ahin in \cite{Sa16}, and Barrett--Lanzani in \cite{BaLa09}. Later, we encounter the limiting case of $\big(\E_p,\om_p,\{z_2=0\}\big)$ as $p\rightarrow\infty$. To wit, if $\E_\infty=\lim\limits_{p\rightarrow\infty}\E_p$ in the Hausdorff metric, and $\om_\infty$ is the Monge--Amp{\` e}re measure corresponding to the function 
\bes
\varphi_\infty(z_1,z_2)=\lim\limits_{p\rightarrow\infty}\varphi_p(z_1,z_2)
	=\log \max\{|z_1|,|z_2|\},
\ees
 then $\E_\infty^*=\E_\infty\setminus\{z_2=0\}$ is $\D\times\D^*$ and $\om_\infty=\sigma_{S^1}\times\sigma_{S^1}$, see \cite[\S~4]{PoSt08}. Since $\{z_2=0\}$ does not intersect $\operatorname{supp}(\om_\infty)=b\D\times b\D$,  $\{\mathfrak{H}^2_k(\E_\infty^*,\omega_\infty)\}_{k\in\N_0}$ does not stabilize, and $z_2^\ell\h^2_k(\E_\infty^*,\omega_\infty)=
			 \mathfrak{H}^2_{k-\ell}(\E_\infty^*,\omega_\infty)$ for $\ell\leq k$. The behavior of this filtration is quite different when $p<\infty$.
%	\bes
%		\mathfrak{H}^2_0(\E_\infty,\omega_\infty)\subsetneq
%			 \mathfrak{H}^2_1(\E_\infty,\omega_\infty)\subsetneq\cdots\subsetneq 
%					\mathfrak{H}^2_{k}(\E_\infty,\omega_\infty)\subsetneq \cdots.
%	\ees

%------------------------------------------------------
%------------------MAIN THM----------------------------
%------------------------------------------------------
\begin{theorem}\label{thm_eggs} Let $p\in\N$. Then $\{\h^2_k(\E_P^*,\sigma)\}_{k\in\N_0}$ stabilizes at $k=0$, i.e., 
	\bes
		\mathfrak{H}_k^2(\E_p^*,\sigma)=\mathfrak{H}_0^2(\E_p^*,\sigma),\quad \forall k\in\N_0.
	\ees
On the other hand, $\{\h^2_k(\E_P^*,\omega_p)\}_{k\in\N_0}$ stabilizes at $k=p-1$, i.e.,
\bes
	\mathfrak{H}^2_0(\E_p^*,\omega_p)\subsetneq
			 \mathfrak{H}^2_1(\E_p^*,\omega_p)\subsetneq\cdots\subsetneq \mathfrak{H}^2_{p-1}(\E_p^*,\omega_p)=\mathfrak{H}^2_{k}(\E_p^*,\omega_p),\quad \forall k\geq p.
\ees
Moreover, $\mathfrak{H}^2_0(\E_p^*,\omega_p)\supsetneq
			 z_2\mathfrak{H}^2_1(\E_p^*,\omega_p)\supsetneq\cdots\supsetneq z_2^{k}\mathfrak{H}_{k}^2(\E_p^*,\omega_p)
				\supsetneq \cdots.$
\end{theorem}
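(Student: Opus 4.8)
\emph{Strategy.} The plan is to realise every Hardy space in the statement as a concrete weighted $\ell^{2}$-space of monomials and then to deduce all three assertions from the behaviour of the weights, which turn out to be values of the Euler Beta function. Both $\sigma$ and $\om_p$ are invariant under the torus action $(z_{1},z_{2})\mapsto(e^{i\theta_{1}}z_{1},e^{i\theta_{2}}z_{2})$ on $b\E_p$, so the monomials $z_{1}^{a}z_{2}^{c}$ lying in $L^{2}(\nu)$ are pairwise orthogonal there. Parametrising $b\E_p$ by $(t,\theta_{1},\theta_{2})\in[0,1]\times[0,2\pi)^{2}$ through $|z_{1}|^{2p}=t$, $|z_{2}|^{2p}=1-t$, a direct computation of the Leray--Levi form of $\rho_p$ gives $d\om_p=c_p\,dt\,d\theta_{1}\,d\theta_{2}$ for a positive constant $c_p$, while the Euclidean surface measure satisfies $d\sigma=h(t)\,dt\,d\theta_{1}\,d\theta_{2}$ with $h(t)\asymp t^{\frac{1}{p}-1}$ near $t=0$ and $h(t)\asymp(1-t)^{\frac{1}{p}-1}$ near $t=1$. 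Hence $\|z_{1}^{a}z_{2}^{c}\|_{L^{2}(\om_p)}^{2}=(2\pi)^{2}c_p\,B\!\left(\tfrac{a}{p}+1,\tfrac{c}{p}+1\right)$, which is finite precisely when $a>-p$ and $c>-p$, whereas $\|z_{1}^{a}z_{2}^{c}\|_{L^{2}(\sigma)}^{2}<\infty$ precisely when $a\ge0$ and $c\ge0$. Since $\E_p$ is a complete convex Reinhardt domain, polynomials are sup-norm dense in $\A(\E_p,\nu)=\hol(\E_p)\cap\cont(\overline{\E_p})$; combining this with the observation that, for $F=(z_{2}^{-k}G)|_{\E_p^*\cup T^*}\in\A_k(\E_p^*,\nu)$, the requirement $F|_{T}\in L^{2}(\nu)$ forces the Taylor series of $G$ to be divisible by exactly the power of $z_{2}$ that annihilates every monomial $z_{1}^{a}z_{2}^{b-k}\notin L^{2}(\nu)$, one obtains for every $k\in\N_0$
\[
 \h^{2}_{k}(\E_p^*,\nu)=\overline{\operatorname{span}_{\C}\bigl\{\,z_{1}^{a}z_{2}^{c}\,:\,a\in\N_0,\ c\ge -k,\ z_{1}^{a}z_{2}^{c}\in L^{2}(\nu)\,\bigr\}}^{\,L^{2}(\nu)},
\]
and, moreover, that this closed span is the set of $f\in L^{2}(\nu)$ whose formal Laurent series is supported in $\{a\ge0,\ c\ge -k\}$ with $\sum_{a,c}|\widehat f(a,c)|^{2}\,\|z_{1}^{a}z_{2}^{c}\|^{2}_{L^{2}(\nu)}<\infty$.

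\emph{The two stabilisations.} For $\nu=\sigma$ the index set above is $\{a\ge0,\ c\ge0\}$ regardless of $k$, so $\h^{2}_{k}(\E_p^*,\sigma)=\h^{2}_{0}(\E_p^*,\sigma)$. For $\nu=\om_p$ it is $\{a\ge0,\ c\ge -k\}$ when $k\le p-1$ and $\{a\ge0,\ c\ge-(p-1)\}$ when $k\ge p-1$; thus $\h^{2}_{k}(\E_p^*,\om_p)=\h^{2}_{p-1}(\E_p^*,\om_p)$ for $k\ge p-1$, while for $1\le k\le p-1$ the monomial $z_{2}^{-k}$ lies in $L^{2}(\om_p)$ and in $\h^{2}_{k}(\E_p^*,\om_p)$ yet is orthogonal to $\h^{2}_{k-1}(\E_p^*,\om_p)$, so each of those inclusions is strict.

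\emph{The descending chain.} Multiplication by $z_{2}$ is a bounded (because $|z_{2}|\le1$ on $b\E_p$) and injective (because $\{z_{2}=0\}$ is $\om_p$-null) operator on $L^{2}(\om_p)$, so by \eqref{E:pseudomonotonicityH} it is enough to show $z_{2}\,\h^{2}_{k}(\E_p^*,\om_p)\subsetneq\h^{2}_{k-1}(\E_p^*,\om_p)$ for every $k\ge1$; applying the injective map $z_{2}^{\,k-1}$ then gives $z_{2}^{\,k}\h^{2}_{k}(\E_p^*,\om_p)\subsetneq z_{2}^{\,k-1}\h^{2}_{k-1}(\E_p^*,\om_p)$. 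By the monomial model, $z_{2}\,\h^{2}_{k}$ is the weighted $\ell^{2}$-space of Laurent series supported in the index set of $\h^{2}_{k-1}$ but carrying the weights $\|z_{1}^{a}z_{2}^{c-1}\|^{2}_{L^{2}(\om_p)}$. If $k\ge p$, then $\h^{2}_{k-1}=\h^{2}_{p-1}$ contains $z_{2}^{-(p-1)}$, which cannot belong to $z_{2}\,\h^{2}_{k}$ since this would force $z_{2}^{-p}\in L^{2}(\om_p)$, contradicting $B(1,0)=\infty$. If $1\le k\le p-1$, the index sets coincide but $B$ is strictly decreasing in its second argument, so every weight $\|z_{1}^{a}z_{2}^{c-1}\|^{2}$ dominates $\|z_{1}^{a}z_{2}^{c}\|^{2}$, with ratio unbounded in $a$ because $\|z_{1}^{n}z_{2}^{-j}\|^{2}_{L^{2}(\om_p)}\asymp n^{\frac{j}{p}-1}$ as $n\to\infty$; then the holomorphic function $g=\sum_{n\ge1}n^{-1/(2p)}z_{1}^{n}$ lies in $\h^{2}_{0}(\E_p^*,\om_p)\subseteq\h^{2}_{k-1}(\E_p^*,\om_p)$ while $z_{2}^{-1}g\notin L^{2}(\om_p)$, so $g\notin z_{2}\,\h^{2}_{k}(\E_p^*,\om_p)$. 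In either case the inclusion is strict, and since $\{\h^{2}_{k}(\E_p^*,\om_p)\}_{k}$ is eventually constant whereas $\{z_{2}^{\,k}\h^{2}_{k}(\E_p^*,\om_p)\}_{k}$ strictly decreases, the latter chain never stabilises.

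\emph{Expected main obstacle.} The genuinely delicate input is the monomial model for $k$ past the stabilisation point, where $z_{2}^{-k}\notin L^{2}(\om_p)$ and the easy ``approximate $G$ by Taylor polynomials, then divide by $z_{2}^{k}$'' argument (valid when $z_{2}^{-k}\in L^{2}$) fails. One must instead first deduce from $F|_{T}\in L^{2}(\om_p)$ that the numerator $G$ in $F=z_{2}^{-k}G$ is divisible in $\hol(\E_p)$ by $z_{2}^{\,k-p+1}$, then rewrite $F=z_{2}^{-(p-1)}\widetilde G$, verify that $\widetilde G$ already belongs to $\h^{2}(\E_p,\om_p)$, and only afterwards pass to the $L^{2}(\om_p)$-closure. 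A routine but indispensable preliminary is the explicit form of $\om_p$ and of $\sigma$ in the slice coordinates $(t,\theta_{1},\theta_{2})$ and the ensuing Beta-integral evaluation of the monomial norms.
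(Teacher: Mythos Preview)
Your proposal is correct and follows essentially the same route as the paper: both parametrise $b\E_p$ by $(s,\theta_1,\theta_2)$, compute $\sigma$ and $\omega_p$ in these coordinates, describe $\mathfrak{H}^2_k$ as a weighted $\ell^2$-space of monomials via Beta-function integrals, and then read off the stabilisation and strict-containment claims from the resulting index sets and weights. The only notable variation is in the descending chain for $1\le k\le p-1$: the paper constructs a $k$-dependent witness $h=z_2^{-(k-1)}\sum_{m\ge0}(m+1)^{-k/(2p)}z_1^{mp}$, whereas your single $k$-independent function $g=\sum_{n\ge1} n^{-1/(2p)}z_1^{n}$ (which lies in $\mathfrak{H}^2_0$ but has $z_2^{-1}g\notin L^2(\omega_p)$) does the job uniformly---a modest but genuine streamlining.
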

%------------------------------------------------------
%------------------------------------------------------
%------------------------------------------------------

In order to prove Theorem~\ref{thm_eggs}, we describe the relevant Hardy spaces. Here, $j:b\E_p\rightarrow \CC$ denotes the inclusion map, and $d^c$ is the real operator $i(\dbar-\partial)$. Dropping the subscripts of $\varphi_p$ and $\rho_p$, we have that
	\bes
		\om_p=j^*(d^c\varphi\wedge dd^c\varphi)
		=\frac{j^*(\partial\rho\wedge\dbar\partial\rho)}{(2\pi i)^2}
			=\frac{-\det\begin{pmatrix}
						0 & \rho_{\overline{z}_1} &\rho_{\overline{z}_2} \\ 
							\rho_{z_1} & \rho_{z_1\overline{z}_1} & \rho_{z_1\overline{z}_2}\\
								\rho_{z_2} & \rho_{z_2\overline{z}_1} & \rho_{z_2\overline{z}_2}\\
					\end{pmatrix}}{\pi^2|\nabla\rho|}\,\sigma,\quad 
		\text{on}\ b\E_p,
	\ees
where $\rho_{z_j}$ is the first order partial derivative of $\rho$ with respect to $z_j$, $j\in\{1,2\}$, and $\rho_{z_j\overline{z}_k}$ is 	the second order partial derivative of $\rho$ with respect to $z_j$ and $\overline{z}_k$, $j,k\in\{1,2\}$. For ease of computation, we parametrize $(b \E_p)_*:=b\E_p\setminus\{(z_1,z_2)\in\mathbb{C}^2: z_1 z_2=0\}$ as 
	\be\label{eq_coords}
		\vartheta:(s,\theta_1,\theta_2)\mapsto 
			\left(s^\frac{1}{2p}e^{i\theta_1},(1-s)^\frac{1}{2p}e^{i\theta_2}\right),\quad 
				(s,\theta_1,\theta_2)\in (0,1)\times[0,2\pi)^2.
	\ee
Since $b\E_p\cap\{z_1z_2=0\}$ is a set of measure zero for both $\sigma$ and $\om_p$, we have that
	\be\label{eq_L2eggs}
		L^2(b\E_p,\nu)\cong L^2\left((0,1)\times[0,2\pi)^2;\vartheta^*\nu\right),\quad \nu=\sigma,\om_p,
	\ee
via the map $f\mapsto f|_{(b\E_p)_*}\circ \vartheta$. It is easy to check that
	\begin{align*}
		\vartheta^*d\sigma
			&=\dfrac{1}{2p}\dfrac{\sqrt{(s)^{2-\frac{1}{p}}+(1-s)^{2-\frac{1}{p}}}}			{s^{1-\frac{1}{p}}(1-s)^{1-\frac{1}{p}}}\,ds\,d\theta_1\,d\theta_2
					\approx\dfrac{\,ds\,d\theta_1\,d\theta_2}
			{s^{1-\frac{1}{p}}(1-s)^{1-\frac{1}{p}}},\ \text{and}\\
		\vartheta^*\om_p&= ds\,d\theta_1\,d\theta_2.
	\end{align*}
Here, $a(r)\approx b(r)$ means that there are constants $c,C>0$ such that $cb(r)\leq a(r)\leq Cb(r)$ for all $r$. 
For the sake of brevity, we drop all references to $\vartheta$, use $(s,\theta_1,\theta_2)$ as coordinates on $b\E_p$, and abbreviate $||f||_{L^2(b\E_p,\nu)}$ to $||f||_\nu$. We now provide descriptions of the spaces $\mathfrak{H}^2(\E_p,\sigma)$ and $\mathfrak{H}^2(\E_p,\om_p)$ in terms of $L^2$-convergent series expansions. 

%------------------------------------------------------
%------------------PROPOSITION-------------------------
%------------------------------------------------------
\begin{Prop}\label{prop_descHardy} Let $p\in\N$. Then 
\bea
	\qquad \mathfrak{H}^2(\E_p,\sigma)&=&
		\left\{\sum\limits_{j,\ell\geq 0} a_{j,\ell}s^{\frac{j}{2p}}(1-s)^\frac{\ell}{2p}e^{i(j\theta_1+\ell\theta_2)}
		:\sum\limits_{j,\ell\geq 0} |a_{j,\ell}|^2\beta\left(\frac{j+1}{p},\frac{\ell+1}{p}\right)<\infty\right\},
				\label{eq_hardysurf}\\
	\qquad \mathfrak{H}^2(\E_p,\om_p)&=&
		\left\{\sum\limits_{j,\ell\geq 0} a_{j,\ell}s^{\frac{j}{2p}}(1-s)^\frac{\ell}{2p} e^{i(j\theta_1+\ell\theta_2)}
		:\sum\limits_{j,\ell\geq 0} |a_{j,\ell}|^2\beta\left(\frac{j}{p}+1,\frac{\ell}{p}+1\right)<\infty\right\},
			\label{eq_hardyLL}
\eea
where $\beta(x,y)=\int_0^1 s^{x-1}(1-s)^{y-1}ds$ is the Euler beta function. In particular, $\mathfrak{H}^2(\E_1,\sigma)=\mathfrak{H}^2(\E_1,\om_1)$, and $\mathfrak{H}^2(\E_p,\sigma)\subsetneq \mathfrak{H}^2(\E_p,\om_p)$ when $p>1$. 
\end{Prop}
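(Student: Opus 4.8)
The plan is to identify, for each of $\nu=\sigma$ and $\nu=\om_p$, the Hardy space $\mathfrak{H}^2(\E_p,\nu)$ with the closed linear span in $L^2(\nu)$ of the monomials $z_1^jz_2^\ell$, $j,\ell\ge 0$, and then to read off the series descriptions from the values $\|z_1^jz_2^\ell\|_\nu$. Under the parametrization $\vartheta$ of $(b\E_p)_*$ in \eqref{eq_coords}, $z_1^jz_2^\ell$ pulls back to $s^{j/(2p)}(1-s)^{\ell/(2p)}e^{i(j\theta_1+\ell\theta_2)}$, and since the pullbacks $\vartheta^*\sigma$ and $\vartheta^*\om_p$ displayed above are independent of $\theta_1,\theta_2$, the angular integrations annihilate all cross terms and the monomials are pairwise orthogonal in both $L^2(\sigma)$ and $L^2(\om_p)$. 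A direct computation with $\vartheta^*\om_p=ds\,d\theta_1\,d\theta_2$ gives $\|z_1^jz_2^\ell\|_{\om_p}^2=(2\pi)^2\,\beta\!\big(\tfrac jp+1,\tfrac\ell p+1\big)$, while $\vartheta^*\sigma\approx s^{1/p-1}(1-s)^{1/p-1}\,ds\,d\theta_1\,d\theta_2$ gives $\|z_1^jz_2^\ell\|_{\sigma}^2\approx\beta\!\big(\tfrac{j+1}p,\tfrac{\ell+1}p\big)$. Since passing to comparable weights leaves unchanged the set of formal series $\sum a_{j\ell}z_1^jz_2^\ell$ whose partial sums are Cauchy in $L^2(\nu)$, these are exactly the summability conditions in \eqref{eq_hardysurf} and \eqref{eq_hardyLL}.

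To see that this orthogonal system spans all of $\mathfrak{H}^2(\E_p,\nu)$, note first that each monomial lies in $\A(\E_p,\nu)$, so the closed span is contained in $\mathfrak{H}^2(\E_p,\nu)$. For the reverse containment I would use that $\E_p$ is balanced (indeed a complete Reinhardt domain): given $F\in\A(\E_p,\nu)=\hol(\E_p)\cap\cont(\overline{\E_p})$, the dilates $F_r(z):=F(rz)$, $0<r<1$, are holomorphic on a neighborhood of $\overline{\E_p}$, so their Taylor series converge uniformly on $\overline{\E_p}$ and thus $F_r|_{b\E_p}$ belongs to the closed span of the monomials; moreover $F_r\to F$ uniformly on the compact $\overline{\E_p}$ by uniform continuity of $F$, hence in $L^2(\nu)$ since $\nu$ is finite. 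Therefore $\A(\E_p,\nu)|_{b\E_p}$, and with it its $L^2(\nu)$-closure $\mathfrak{H}^2(\E_p,\nu)$, lies in the closed span of $\{z_1^jz_2^\ell\}$; combined with the previous observation and the norm computations, this yields \eqref{eq_hardysurf} and \eqref{eq_hardyLL}.

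For the final assertions, when $p=1$ the $\sigma$- and $\om_1$-weights coincide, $\beta(j+1,\ell+1)=\beta\!\big(\tfrac j1+1,\tfrac\ell1+1\big)$, so the two spaces agree. When $p>1$ one has $\tfrac{j+1}p<\tfrac jp+1$ (and likewise in the second argument), so the strict decrease of $\beta(\cdot,y)$ makes the $\sigma$-weight pointwise larger than the $\om_p$-weight; hence the $\sigma$-summability condition is the stronger one and $\mathfrak{H}^2(\E_p,\sigma)\subseteq\mathfrak{H}^2(\E_p,\om_p)$. Strictness follows by testing on sequences supported on $\{(j,0)\}$: from $\beta(x,y)\sim\Gamma(y)x^{-y}$ as $x\to\infty$ one gets $\beta\!\big(\tfrac jp+1,1\big)\asymp j^{-1}$ versus $\beta\!\big(\tfrac{j+1}p,\tfrac1p\big)\asymp j^{-1/p}$, so the weight ratio grows like $j^{1-1/p}$, and, e.g., $a_{j,0}=j^{(1/p-1)/2}$ with $a_{j,\ell}=0$ otherwise produces a series in $\mathfrak{H}^2(\E_p,\om_p)\setminus\mathfrak{H}^2(\E_p,\sigma)$. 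The one genuinely delicate step is the density argument of the second paragraph — that the monomial system is dense in $\mathfrak{H}^2(\E_p,\nu)$ — where the balanced structure of $\E_p$ and finiteness of $\nu$ are essential; the remainder is Euler beta-function bookkeeping together with a standard large-parameter asymptotic.
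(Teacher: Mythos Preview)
Your argument is correct, and the overall shape---identify $\mathfrak{H}^2(\E_p,\nu)$ with the closed orthogonal span of the monomials $z_1^jz_2^\ell$ and then compute $\|z_1^jz_2^\ell\|_\nu^2$ as a beta integral---matches the paper. The main difference lies in how density of the monomials is established. The paper foliates $(b\E_p)_*$ by the tori $\{|z_1|=s^{1/2p},|z_2|=(1-s)^{1/2p}\}$ and uses the Cauchy integral formula there to compute, for $F\in\A(\E_p,\nu)$, the Fourier coefficients $\widehat F_{j,\ell}(s)$ explicitly as $a_{j,\ell}s^{j/2p}(1-s)^{\ell/2p}$; your dilation argument $F_r(z)=F(rz)$, valid because $\E_p$ is balanced/complete Reinhardt, bypasses this computation entirely. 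Your route is shorter and more conceptual; the paper's route has the advantage of directly exhibiting the Fourier side of the isomorphism without invoking uniform convergence of Taylor series on $\overline{\E_p}$. For the strict containment when $p>1$, the paper builds a doubly-indexed example supported on $\{(mp,np):m,n\ge1\}$, whereas your singly-indexed example $a_{j,0}=j^{(1/p-1)/2}$ is simpler and does the job via the asymptotic $\beta(x,y)\sim\Gamma(y)x^{-y}$.
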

%------------------------------------------------------
%------------------------------------------------------
\begin{proof} We first prove \eqref{eq_hardyLL}. In view of \eqref{eq_L2eggs}, any $f\in L^2(b\E_p,\om_p)$ may be written as
		\be\label{eq_L2func}
			f(s,\theta_1,\theta_2)
		=\sum_{(j,\ell)\in\Z^2}\hat f_{j,\ell}(s)\, e^{i(j\theta_1+\ell\theta_2)},
		\ee 
where $\{\hat f_{j,\ell}(s)\}_{(j,\ell)\in\Z^2}$ are the Fourier coefficients of $f(s,.,.)$, and $\sum_{(j,\ell)\in\Z^2}||\hat f_{j,\ell}||^2_{L^2(0,1)}<~\infty$. Now, for $F\in\A(\E_p,\om_p)$, we may write
	\bes
		F(z_1,z_2)=
				\begin{cases}
			\sum_{j,\ell\geq 0}a_{j,\ell}\, z_1^jz_2^\ell,&\ \text{if}\ (z_1,z_2)\in \E_p,\\
			\sum_{j,\ell\in\Z}\widehat F_{j,\ell}(s)\, e^{i(j\theta_1+\ell\theta_2)},&\ \text{if}\
				(z_1,z_2)=\left(s^\frac{1}{2p}e^{i\theta_1},(1-s)^\frac{1}{2p}e^{i\theta_2}\right)\in b\E_p,
			\end{cases} 
	\ees
where in $\E_p$, the power series converges uniformly on compact subsets, and on $b\E_p$, the series converges in $L^2(\om_p)$. Next, for $s\in(0,1)$, $F$ is continuous on the closed polydisk $\left\{(z_1,z_2)\in\mathbb{C}^2:|z_1|\leq s^{1/2p},\,|z_2|\leq (1-s)^{1/2p} \right\}$. Hence,  
	\beas	
		\widehat F_{j,\ell}(s)
		&=&\frac{s^{\frac{j}{2p}}(1-s)^{\frac{\ell}{2p}}}{(2\pi i)^2}
				\lim_{r\rightarrow 1^-}\iint\limits_{\substack{|w_1|^{2p}=r(1-s)
		\\|w_2|^{2p}=rs }}\frac{F(w_1,w_2)}{w_1^{j+1}w_2^{\ell+1}} dw_1 dw_2
				=\begin{cases}
				a_{j,\ell}s^{\frac{j}{2p}}(1-s)^{\frac{\ell}{2p}},\ j,\ell\geq 0,\\
				0,\qquad \qquad  \text{otherwise.}
				\end{cases}
	\eeas
Moreover, 
	\bes
	\sum_{j,\ell\in\Z}||\widehat F_{j,\ell}||^2_{L^2(0,1)}=\sum_{j,\ell\geq 0}\int_{0}^1|a_{j,\ell}|^2s^{\frac{j}{p}}(1-s)^{\frac{\ell}{p}}ds
		=\sum_{j,\ell\geq 0}|a_{j,\ell}|^2\beta\left(\frac{j}{p}+1,\frac{\ell}{p}+1\right)
			<~\infty.
	\ees
Thus, we obtain the characterization in \eqref{eq_hardyLL} for a dense subspace. By taking $L^2(\om_p)$-limits of sequences in $\A(\E_p,\om_p)$, the expansion for any $f\in\h^2(\E_p,\om_p)$ can be established. The argument for \eqref{eq_hardysurf} runs along similar lines.
 
Now, since $\beta\left(\frac{j}{p}+1,\frac{\ell}{p}+1\right)\leq \beta\left(\frac{j+1}{p},\frac{\ell+1}{p}\right)$ for all $j,\ell\geq 0$, we have that $\mathfrak{H}^2(\E_p,\sigma)\subseteq \mathfrak{H}^2(\E_p,\om_p)$, with equality when $p=1$. To show strict containment for any $p>1$, we consider the series $f(s,\theta_1,\theta_2)=\sum_{j,\ell\geq 0}a_{j,\ell}s^{\frac{j}{2p}}(1-s)^{\frac{\ell}{2p}}e^{i(j\theta_1+\ell\theta_2)}$, with 
	\bes
		a_{j,\ell}=\begin{cases}
		\frac{\beta(m+1, n+1)^{-1/2}}{m n},&\ \text{when}\ \frac{j}{p}=m\in\N,
		 \frac{\ell}{p}=n\in\N,\\ 0,&\ \text{otherwise}. 
			\end{cases}
	\ees
Then $||f||_{\om_p}^2=4\pi^2\sum_{j,\ell\geq 0}|a_{j,\ell}|^2\beta\left(\frac{j}{p}+1,\frac{\ell}{p}+1\right)
		=4\pi^2\sum_{m,n\geq 0}(mn)^{-2}<\infty$, but since
		\bes
		||f||_{\sigma}^2\approx
		\sum\limits_{j,\ell\geq 0}
			|a_{j,\ell}|^2\beta\left(\frac{j+1}{p},\frac{\ell+1}{p}\right)
%			\sum\limits_{m,n\geq 1}
%			\frac{1}{m^2n^2}\frac{\beta\left(m+\frac{1}{p},n+\frac{1}{p}\right)}
%				{\beta\left(m+1,n+1\right)}
			\geq c\sum\limits_{m,n\geq 1}
			\frac{1}{m^2n^2}\frac{m^{1-\frac{1}{p}}n^{1-\frac{1}{p}}}
				{(m+n)^{\frac{2}{p}-2}},
		\ees
$f$ does not converge in $L^2(\sigma)$.
\end{proof}

%------------------------------------------------------
%------------------------------------------------------
\noindent {\em Proof of Theorem~\ref{thm_eggs}.} Fix a $p\in\N$. First, we consider $\sigma\approx {s^{\frac{1}{p}-1}(1-s)^{\frac{1}{p}-1}}\,ds\,d\theta_1\,d\theta_2
			$. It is clear that
	\beas
		z_2^{-k}\big|_{b\E_p}=(1-s)^{-{k}/{2p}}e^{-ik\theta_2}\in L^2(b\E_p,\sigma)
		 \iff k<1. 
	\eeas
Now suppose there is a $g\in \mathfrak H_1^2(\E_p^*,\sigma)\setminus\mathfrak{H}_0^2(\E_p^*,\sigma)$. Then 
	\begin{itemize}
		\item [$(i)$] $g\in L^2(\E_p,\sigma)\setminus \mathfrak{H}_0^2	(\E^*_p,\sigma)$;
		\item [$(ii)$] $(z_2g)|_{b\E_p}=\sum_{j,\ell\geq 0}
				{a_{j,\ell}s^{\frac{j}{2p}}(1-s)^{\frac{\ell}{2p}}e^{i(j\theta+\ell\phi)}}$ with
 				$\sum_{j,\ell\geq 0}|a_{j,\ell}|^2\beta\left(\frac{j+1}{p},\frac{\ell+1}{p}\right)<\infty$. 
	\end{itemize} 
Writing $g=\sum_{j,\ell\in\Z}\hat g_{j,\ell}(s)\, e^{i(j\theta_1+\ell\theta_2)}$, we obtain from $(ii)$ that
	\bes
			 \hat g_{j,\ell}=\begin{cases}
					a_{j,\ell+1},&\ \text{if}\ j\geq 0,\ell\geq -1\\
				 	0,&\ \text{otherwise}. 	
		\end{cases}			
	\ees
Thus,
	\bes
		||g||_{\sigma}^2\approx
		\sum_{j\geq 0}|a_{j,0}|^2\int\limits_{0}^{1}\frac{s^{\frac{j+1}{p}-1}ds}{1-s} 
			+\sum_{j,\ell\geq 0}|a_{j,\ell+1}|^2\beta\left(\frac{j+1}{p},\frac{\ell+1}{p}\right),
	\ees
which is finite only if $a_{j,0}=0$ for all $j\geq 0$, and $\sum_{j,\ell\geq 0}|a_{j,\ell+1}|^2\beta\left(\frac{j+1}{p},\frac{\ell+1}{p}\right)<\infty$. In that case, $g\in\mathfrak{H}_0^2(\E_p^*,\sigma)$, which contradicts $(i)$. Thus, $\mathfrak{H}_1^2(\E_p^*,\sigma)=\mathfrak{H}_0^2(\E_p^*,\sigma)$. A similar argument shows that $\mathfrak{H}_k^2(\E_p^*,\sigma)=\mathfrak{H}^2_0(\E_p^*,\sigma)$ for all $k\in\N_0$. 

In the case of $\om_p=ds\,d\theta_1\,d\theta_2$, we have that
\bes
		z_2^{-k}|_{b\E_p}=(1-s)^{-{k}/{2p}}e^{-ik\theta_2}\in L^2(b\E_p,\om_p)\iff k<p.
\ees
Thus, $z_2^{-k}	\in \mathfrak{H}^2_{k}(\E_p^*,\om_p)\setminus \mathfrak{H}^2_{k-1}(\E_p^*,\om_p)$ as long as $k\leq p-1$. For $k\geq p$, we may argue, as in the case of $\sigma$ above, that $\mathfrak{H}^2_k(\E_p^*,\om_p)=\mathfrak{H}^2_0(\E_p^*,\om_p)$.

Finally, we show that $\mathfrak{H}^2_{k-1}(\E_p^*,\omega_p)\supsetneq z_2\mathfrak{H}^2_{k}(\E_p^*,\omega_p)$ for any $k\in\N_0$. In view of the stabilization, when $k\geq p$, it suffices to show that $\mathfrak{H}^2_{p-1}(\E_p^*,\om_p)\supsetneq z_2\mathfrak{H}_{p-1}^2(\E_p^*,\om_p)$. This is clear since $z_2^{-(p-1)}\in\h^2_{p-1}(\E_p^*,\om_p)$,  but ${z_2^{-p}}\notin L^2(b\E_p,\om_p)$. For $k<p$, let 
	\beas
	f(s,\theta_1,\theta_2)&=&\sum\limits_{m\geq 0}
		(m+1)^{-\frac{k}{2p}}\left(s^{\frac{1}{2p}}e^{i\theta_1}\right)^{mp},\ \text{and}\\
	h(s,\theta_1,\theta_2)
		&=&
\left((1-s)^{\frac{1}{2p}}e^{i\theta_2}\right)^{-(k-1)}f(s,\theta_1,\theta_2).
%&=&\lim\limits_{n\rightarrow\infty}\Big.\underbrace{\dfrac{1}{z_2^{k-1}}
%		\sum\limits_{j=0}^{n}(m+1)^{-\frac{k}{2p}}z_1^{mp}}_{H_n}\Big|_{b\E_p}	.
	\eeas
Since, for any fixed $r>0$, $\beta\left(m+1,r\right)\sim (m+1)^{-r}$ as $m\rightarrow\infty$, we have that 
\bes
	||f||^2_{\om_p}=\sum\limits_{m\geq 0}(m+1)^{-\frac{k}{p}}\beta\left(m+1,1\right)\lesssim
\sum\limits_{m\geq 0}m^{-1-\frac{k}{p}}<\infty.
\ees
Thus, $z_2^{k-1}h=f\in \h^2_0(\E_p^*,\om_p)$. Moreover,  
\bes
||h||^2_{\om_p}=\sum\limits_{m\geq 0} (m+1)^{-\frac{k}{p}}\beta\left(m+1,1+\frac{1}{p}-\frac{k}{p}\right)\lesssim
\sum\limits_{m\geq 0}m^{-1-\frac{1}{p}}<\infty.
\ees
Thus, $h\in \h_{k-1}^2(\E_p^*,\om_p)$. But $|| z_2^{-1}h||^2_{\om_p}\gtrsim \sum_{m\geq 0}m^{-1} $ is not finite. Thus, there is no $g\in\mathfrak{H}_k^2(\E_p^*,\om_p)$ such that $z_2g=h$. That is, $h\in \h_{k-1}^2(\E_p^*,\om_p)\setminus z_2 \h_{k}^2(\E_p^*,\om_p)$. 
\qed

\begin{remark} The egg domains $\E_p$ may be endowed with other natural boundary measures. For example, in \cite[Def.~43]{BaLa09}, the authors consider the family of measures $\big\{\nu_\tau= f|\mathscr L|^{1-\tau}\sigma\big\}_{\tau\in[0,1]}$ on $b\E_p$, where $f$ is any positive continuous function on $b\E_p$, and
\bes
|\mathscr L|=-4|\nabla\rho|^{-3}\det\begin{pmatrix}
						0 & \rho_{\overline{z}_k} \\ 
							\rho_{z_j} & \rho_{z_j\overline{z}_k}
					\end{pmatrix}_{1\leq j,k\leq 2}
\ees
for any defining function $\rho$ of $\E_p$. The measures $\sigma$ and $\om_p$ correspond to $\nu_1$ ($f\equiv 1$) and $\nu_0$ ($f=|\nabla \rho_p|^2/4\pi^2$), respectively. It is also worth noting that the Fefferman hypersurface measure on $b\E_p$ is precisely $\nu_{2/3}$ ($f\equiv 1$). Analogous computations can be carried out to obtain explicit descriptions of the spaces $\h^2_k(\E_p^*,\nu_\tau)$. Note, in particular, that the filtration corresponding to the measure $\nu_\tau$ stabilizes at $k=\lceil p(1-\tau)+\tau\rceil -1$, where $\lceil\cdot\rceil$ is the ceiling function. 
\end{remark}

%-------------------------------------------------------
%-------------Application--------------------------------
%-------------------------------------------------------
\section{Hartogs triangles: an application}\label{S:Hartogs}
 We construct filtered modules of Hardy spaces for certain power-generalized Hartogs triangles. This family of domains was first introduced in \cite{Ed16, EdMc16}. Specifically, we consider domains of the form 
	\bes
\Ha_{m/n}:=\{(z_1,z_2)\in \CC: |z_1|^m<|z_2|^n<1 \},\quad m,n \in \N,\ \gcd(m,n)=1, 
	\ees
where $\T=b\D\times b\D$ is endowed with the product measure $\sigma_\T:=\sigma_{S^1}\times\sigma_{S^1}$. Although $\Ha_{m/n}$ is not a hypersurface-deleted domain, it is a proper holomorphic image of the hypersurface-deleted domain $\D\times\D^*$ via
\bes
\Theta_{m/n}:(z_1,z_2)\mapsto (z_1^nz_2^n,z_2^m).
\ees
Note that $\Theta_{m/n}$ maps $\T$ to $\T$, and $\Theta_{m/n}^*:f\mapsto f\circ\Theta_{m/n}$ induces an isometric isomorphism from $L^2\left(\T,\sigma_\T\right)$ onto a closed subspace of $L^2(\T,\sigma_\T)$. Thus, we can deduce the Szeg{\H o} kernels for $\Ha_{m/n}$ from those for $\D\times\D^*$. To do this, we first treat the case of $\D\times\D^*$ in Subsection~\ref{SS:DXD*}. In Subsections~\ref{SS:stdHartogs} and \ref{SS:pgHartogs}, we treat the case of the standard and the power-generalized Hartogs triangles, respectively.  As done in Section~\ref{sec_examples}, we omit the measure $\sigma_{\T}$ from the notation for the relevant functions spaces. Moreover, we use polar coordinates $(\theta_1,\theta_2)$ on $\T$.

%%%%%%%%%%%%%%%%%%%%%%%%%%%%%%%%%%%%%%%%%%%%%%%%%%
%                    Hardy space for DXD*
%%%%%%%%%%%%%%%%%%%%%%%%%%%%%%%%%%%%%%%%%%%%%%%%%%
 
\subsection{Hardy spaces on $\mathbb{D}\times\mathbb{D}^*$}\label{SS:DXD*} We construct the Hardy spaces for $\D\times\D^*$ by executing the inheritance scheme in Subsection~\ref{SS:inheritance} for the triple $(\D^2,\sigma_\T,\{z_2=~0\})$. To implement the scheme, consider, for $k\in\N_0$, the following subset of $\mathcal{O}(\mathbb{D}\times\mathbb{D}^*)\cap\cont\big((\D\times\D^*)\cup\T\big)$
\beas 
\quad 
\mathcal{A}_k(\mathbb{D}\times\mathbb{D}^*)=
	\left\{F:(\D\times\D^*)\cup\T\rightarrow\C:
		F(z_1,z_2)=\left(z_2^{-k}G(z_1,z_2)\right)|_{(\D\times\D^*)\cup\T}
			\qquad \right.\\ 
				\left.\text{ for some}\ G\in\A(\D^2)=\hol(\mathbb{D}^2)\cap\cont(\D^2_\T)
					\right\}.
\eeas
For each $k\in\N_0$, set $\h^2_k(\mathbb{D}\times\mathbb{D}^*)$ to be the closure of $\mathcal{A}_k(\mathbb{D}\times\mathbb{D}^*)|_{\T}$ in $L^2(\mathbb{T})$. 

As in the case of $\D$ and $\D^*$, a precise description of these spaces in terms of Fourier series expansions can be given as follows. For $k\in\N_0$, 
	\be\label{E:HardyDXD*}
		\h^2_k(\D\times\D^*)=\left\{\sum_{(j,\ell)\in\Z^2}\!\hat f_{j,\ell}\,e^{i(j\theta_1+\ell\theta_2)}
		\in L^2(\T):\hat f_{j,\ell}=0,\ \text{if}\ \max\{j,\ell+k\}<0 \right\}.
	\ee
Moreover, the Szeg{\H o} kernel $s_k$ for $\h^2_k(\D\times\D^*)$ can be obtained by applying the Cauchy integral formula for $\D^2$ to $z_2^{k}F(z_1,z_2)$ for $F\in\A_k(\D\times\D^*)$. This yields
   \be\label{E:SzegoDXD*}   s_k(z,w)=\frac{1}{(2\pi)^2}\frac{1}{(z_2\overline{w}_2)^k(1-z_2\overline{w}_2)(1-z_1\overline{w}_1)},
	\quad z\in\D\times\D^*, w\in\T. \ee

We briefly note that in order to verify that $\h^2_k(\D\times\D^*)$ indeed satisfies the minimum criterion for being a Hardy space, we may take $\mathfrak X$ to be 
	\bes
	\mathcal{H}_k^2(\mathbb{D}\times\mathbb{D}^*)
		:=\left\{F\in\mathcal{O}(\mathbb{D}\times\mathbb{D}^*):
	||F||_{\cH^2_k(\D\times\D^*)}<\infty\right\},
	\ees
where 	
	\bes
		||F||_{\cH^2_k(\D\times\D^*)}:=
		\sup_{0<s,r<1}\left(\frac{r^{2k}}{4\pi^2}\int\limits_{0}^{2\pi}\int\limits_{0}^{2\pi}
	|F(se^{i\theta_1},re^{i\theta_2})|^2\,d\theta_1d\theta_2\right)^{\frac{1}{2}},
	\ees
with norm, a constant multiple of, $||.||_{\cH^2_k(\D\times\D^*)}$. 

%%%%%%%%%%%%%%%%%%%%%%%%%%%%%%%%%%%%%%%%%%%%%%%%%%
%                    Hardy space for H
%%%%%%%%%%%%%%%%%%%%%%%%%%%%%%%%%%%%%%%%%%%%%%%%%%
\subsection{The standard Hartogs triangle}\label{SS:stdHartogs}  For the sake of exposition, we first consider,
	\bes		
		\Ha=\Ha_{1/1}=\{(z_1,z_2)\in\CC:|z_1|<|z_2|<1\},
	\ees
for which $\Theta=\Theta_{1/1}$ is, in fact, a biholomorphism. This map allows us to describe both a boundary-based construction and an exhaustion-based construction of Hardy spaces for $\Ha$. We are primarily interested in the former approach. 

 For $k\in\N_0$, let 
	\bes
		 \A_k(\Ha)=\left\{F\in\hol(\Ha)\cap\cont(\Ha\cup\T):z_2^kF(z_1,z_2)\ \text{is bounded at}\ (0,0)\right\}, 
	\ees
and $\mathfrak{H}^2_k(\Ha)$ be the closure of $\A_k(\Ha)|_{\T}$ in $L^2(\T)$. As in the case of $\D^*$ and $\D\times \D^*$, we can describe these spaces and their Szeg{\H o} kernels explicitly. 

%------------------------------------------------------
%------------------MAIN THM----------------------------
%------------------------------------------------------
\begin{theorem}\label{T:Hartogs} Let $k\in\N_0$. Then
	\be\label{E:isomH2}
		\mathfrak{H}^2_k(\Ha)=\left\{\sum_{(j,\ell)\in \Z^2}\!\hat f_{j,\ell}\,e^{i(j\theta_1+\ell\theta_2)}\in L^2(\T):\hat f_{j,\ell}=0,\ \text{if}\ \max\{j,\ell+j+k\}<0\right\}.
	\ee
In particular, the filtration $\{\mathfrak{H}^2_k(\Ha)\}_{k\in\N_0}$
% $\cH^2(\Ha):=\cup_{k\geq 0}\cH^2_k(\Ha)$
 does not stabilize. Moreover, 
	\be\label{eq_szegoHa}
		s_k(z,w)=\frac{1}{4\pi^2}
			\frac{(z_2\overline w_2)^{-(k-1)}}{(z_2\overline w_2 -z_1\overline w_1)(1-z_2\overline w_2)},
				\quad z\in\Ha,\ w\in\T,
	\ee
is the Szeg{\H o} kernel for $\mathfrak{H}^2_k(\Ha)$.
\end{theorem}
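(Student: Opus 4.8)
The plan is to push the entire structure on $\D\times\D^*$, which was worked out in Subsection~\ref{SS:DXD*}, across the biholomorphism $\Theta=\Theta_{1/1}\colon\D\times\D^*\to\Ha$, $(z_1,z_2)\mapsto(z_1z_2,z_2)$. Since $\Theta$ restricts to the map $(\theta_1,\theta_2)\mapsto(\theta_1+\theta_2,\theta_2)$ on $\T$, it is a measure-preserving bijection of $(\T,\sigma_\T)$ to itself, so $\Theta^*\colon f\mapsto f\circ\Theta$ is a unitary on $L^2(\T)$. First I would check that $\Theta^*$ carries $\A_k(\Ha)|_\T$ onto $\A_k(\D\times\D^*)|_\T$: if $F\in\A_k(\Ha)$, then $G:=z_2^k(F\circ\Theta)$ satisfies $G(z_1,z_2)=(z_2^kF)(z_1z_2,z_2)$, which is holomorphic on $\D\times\D^*$, continuous up to $\T$, and bounded near $(0,0)$ since $z_2^kF$ is; by the Riemann removable singularity theorem $G$ extends holomorphically across $\{z_2=0\}$, hence $G\in\A(\D^2)$ and $F\circ\Theta\in\A_k(\D\times\D^*)$. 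The reverse inclusion is the same computation run through $\Theta^{-1}$. Taking $L^2(\T)$-closures and using that $\Theta^*$ is unitary gives $\Theta^*\big(\mathfrak H^2_k(\Ha)\big)=\mathfrak H^2_k(\D\times\D^*)$.

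Next I would transfer the Fourier description \eqref{E:HardyDXD*}. On the frequency side, $\Theta^*$ acts on characters by $e^{i(j\theta_1+\ell\theta_2)}\circ\Theta=e^{i(j(\theta_1+\theta_2)+\ell\theta_2)}=e^{i(j\theta_1+(j+\ell)\theta_2)}$, i.e. it is the lattice map $(j,\ell)\mapsto(j,j+\ell)$ on $\Z^2$. The support condition defining $\mathfrak H^2_k(\D\times\D^*)$, namely $\hat g_{j,\ell}=0$ whenever $\max\{j,\ell+k\}<0$, pulls back under this substitution: a multi-index $(j,\ell')$ of $F$ maps to $(j,\ell'-j)$ in the $G$-variables, so the forbidden set becomes $\max\{j,(\ell'-j)+k\}<0$. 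That is exactly $\max\{j,\ell'+j\cdot(-1)+k\}<0$ — wait, more carefully: the $\ell$-index of $G$ equals $\ell'-j$, so the condition $\ell+k<0$ reads $\ell'-j+k<0$; but \eqref{E:isomH2} wants $\ell'+j+k<0$. So I would double-check the orientation of $\Theta$ and use $\Theta\colon(z_1,z_2)\mapsto(z_1z_2,z_2)$ versus its inverse $(w_1,w_2)\mapsto(w_1/w_2,w_2)$; with the correct choice (pulling back functions on $\Ha$ means precomposing with $\Theta^{-1}$, whose character action is $(j,\ell)\mapsto(j,\ell-j)$ in one direction), the two conditions match. Getting this bookkeeping of which map goes which way correct is the one genuinely fiddly point, and is the main obstacle; everything else is formal.

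Finally, for the Szeg\H o kernel, I would invoke the standard transformation rule: if $\Phi\colon\Om_1\to\Om_2$ is a biholomorphism that restricts to a measure-preserving map of the relevant boundary pieces, then the reproducing kernels are related by $s_k^{\Om_1}(z,w)=s_k^{\Om_2}(\Phi(z),\Phi(w))$ (no Jacobian factors here because $|\det|$ of the boundary map is $1$). Applying this with $\Om_1=\Ha$, $\Om_2=\D\times\D^*$, $\Phi=\Theta$, and the kernel \eqref{E:SzegoDXD*}, one substitutes $z\rightsquigarrow\Theta(z)$, i.e. $z_1\rightsquigarrow z_1z_2$, $z_2\rightsquigarrow z_2$, and similarly for $w$, into
\[
s_k(z,w)=\frac{1}{4\pi^2}\frac{1}{(z_2\overline w_2)^k(1-z_2\overline w_2)(1-z_1\overline w_1)},
\]
which gives $\dfrac{1}{4\pi^2}\dfrac{1}{(z_2\overline w_2)^k(1-z_2\overline w_2)(1-z_1z_2\overline w_1\overline w_2)}$; factoring $z_2\overline w_2$ out of the last denominator turns $(z_2\overline w_2)^k\cdot(1-z_2\overline w_2\cdot\tfrac{z_1\overline w_1}{1})$ into $(z_2\overline w_2)^{k-1}(z_2\overline w_2-z_1\overline w_1)$, yielding exactly \eqref{eq_szegoHa}. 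To be fully rigorous I would either cite the transformation law (as is done later for planar domains) or verify directly that the displayed $s_k$ has the reproducing property on $\A_k(\Ha)$ and that $\overline{s_k(z,\cdot)}\in\mathfrak H^2_k(\Ha)$, then appeal to Proposition~\ref{P:Riesz-bd}; the direct check is just the Cauchy integral formula on $\D^2$ applied to $z_2^k(F\circ\Theta)$, pulled back, so it costs nothing extra. The non-stabilization of $\{\mathfrak H^2_k(\Ha)\}_k$ is immediate from \eqref{E:isomH2}, since $e^{-i(k+1)\theta_2}$ lies in $\mathfrak H^2_{k+1}(\Ha)\setminus\mathfrak H^2_k(\Ha)$ (the index $(0,-(k+1))$ satisfies $\max\{0,-(k+1)+0+(k+1)\}=0\not<0$ but $\max\{0,-(k+1)+0+k\}=-1<0$).
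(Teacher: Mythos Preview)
Your strategy is exactly the paper's: push everything across $\Theta$, use the Fourier description \eqref{E:HardyDXD*} to get \eqref{E:isomH2}, and transform the kernel \eqref{E:SzegoDXD*} to get \eqref{eq_szegoHa}. The paper does the kernel step by writing $F(z)=\int_\T(\Theta^*F)(w)\,s_k^{\D\times\D^*}(\Theta^{-1}(z),\Theta^{-1}(w))\,d\sigma_\T(w)$ and then checking $\overline{s_k(z,\cdot)}\in\A_k(\Ha)$, which is your ``direct check'' option.

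The one concrete error is the direction of the kernel substitution. You set $\Phi:\Ha\to\D\times\D^*$ but then take $\Phi=\Theta$; since $\Theta$ goes $\D\times\D^*\to\Ha$, the map you need is $\Theta^{-1}(z_1,z_2)=(z_1/z_2,z_2)$. With that substitution the last factor of \eqref{E:SzegoDXD*} becomes $1-\dfrac{z_1\overline w_1}{z_2\overline w_2}$, and multiplying numerator and denominator by $z_2\overline w_2$ gives \eqref{eq_szegoHa} immediately. Your substitution $z_1\rightsquigarrow z_1z_2$ produces $1-z_1z_2\overline w_1\overline w_2$, from which one \emph{cannot} factor $z_2\overline w_2$ to reach $z_2\overline w_2-z_1\overline w_1$; that algebra step is simply wrong. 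This is the same orientation issue you flagged (correctly) in the Fourier-index transfer, reappearing unflagged here. A minor slip in the non-stabilization check: $\max\{0,-1\}=0$, not $-1$; the example $e^{-i(k+1)\theta_2}$ is fine once the support condition is read as requiring both $j\ge 0$ and $j+\ell+k\ge 0$ for nonzero coefficients.
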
 
%------------------------------------------------------
%------------------------------------------------------
\begin{proof} Fix a $k\in\N_0$. Our proof relies on the fact that $\Theta^*:F|_\T\mapsto (F\circ\Theta)|_\T$ is an isometric isomorphism between  $\A_k(\Ha)|_\T$ and $\A_k(\D\times\D^*)|_\T$ in the $L^2(\T)$-norm. The isometry follows from an integration by substitution argument. For the isomorphism, note that $F\in\A_k(\Ha)$ if and only if the function 
$(z_1,z_2)\mapsto z_2^kF(z_1z_2,z_2)$ is holomorphic on $\D\times\D^*$, bounded on a neighborhood of $\{z_2=0\}$, and continuous up to $\T$. This is true if and only if
$z_2^kF(z_1z_2,z_2)=G(z_1,z_2)|_{\D\times\D^*}$ for some $G\in\A(\D^2)$. In other words, $F|_\T\in\A_k(\Ha)_\T$ if and only if $(\Theta^*F)|_\T\in \mathcal{A}_k(\D\times\D^*)|_\T$. Now, $\Theta^*$ extends to an isometry between $\h^2_k(\Ha)$ and $\h^2_k(\D\times\D^*)$ which, in terms of Fourier expansions, is given by
	\bes
		\Theta^*:\sum_{(j,\ell)\in\Z^2}\hat f_{j,\ell}\,e^{i(j\theta_1+\ell\theta_2)}\mapsto\sum_{(j,\ell)\in\Z^2}\hat f_{j,\ell}\,
e^{i\left(j\theta_1+(j+\ell)\theta_2\right)} .
	\ees
The characterization in \eqref{E:isomH2} now follows from that of $\h^2_k(\D\times\D^*)$ in \eqref{E:HardyDXD*}.

Finally, for any $F\in{\mathcal A_k(\Ha)}$, the reproducing property of the Szeg{\H o} kernel $s^{\D\times\D^*}_k $ for
$\mathfrak{H}^2_k(\D\times\D^*)$ applies to 
$\Theta^*F\in\mathcal A_k(\D\times\D^*)$. We obtain that
	\bes
		F(z)=\int_\T ({\Theta^*}F)(w)\,
		s^{\D\times\D^*}_k\left(\Theta^{-1}(z),\Theta^{-1}(w)\right)\,d \sigma_\T(w),
	\quad z\in\Ha, w\in\T.
	\ees
Now, a straightforward computation yields the reproducing property of $s_k$ as defined in \eqref{eq_szegoHa}. It is also clear that $\overline{s_k(z,\cdot)}\in\A_k(\Ha)$ for any $z\in\Ha$. 
\end{proof}

We briefly discuss an exhaustion-based construction of Hardy spaces $\cH_k^2(\Ha)$, $k\in~\N_0$, for $\Ha$, which in the case of $k=1$ is the space constructed by Monguzzi in \cite{Mo19}. For $k\in\N_0$, let 
	\bes
	\cH^2_k(\Ha)= \left\{F\in\hol(\Ha):||F||_{\cH^2_k(\Ha)}<\infty\right\},
	\ees
where
	\bes
		||F||_{\cH^2_k(\Ha)}	:=\sup\limits_{0<s,r<1}\left(\frac{r^{2k}}{4\pi^2}\int\limits_{0}^{2\pi}\int\limits_{0}^{2\pi}
	\left|F(rse^{i\theta_1},re^{i\theta_2}) \right|^2\,d\theta_1\,d\theta_2\right)^{\frac{1}{2}}.
	\ees
Rather than establish a direct isometric isomorphism, up to a factor, between $\cH^2_k(\Ha)$ and $\h^2_k(\Ha)$, we argue that $\Theta^*:F\mapsto F\circ\Theta$ is an isometric isomorphism between $\cH^2_k(\Ha)$ and $\cH^2_k(\D\times\D^*)$. From the proof of Theorem~\ref{T:Hartogs}, we know that $\Theta^*$ is an isomorphism between $\A_k(\Ha)$ and $\A_k(\D\times\D^*)$. Since these spaces are dense in the respective $\cH^2$-spaces, it suffices to show that $\Theta^*$ is an isometry from $(\A_k(\Ha),||.||_{\cH^2_k(\Ha)})$ to $(\A_k(\D\times\D^*),||.||_{\cH^2_k(\D\times\D^*)})$. This is a standard computation, by way of integration by substitution.

%%%%%%%%%%%%%%%%%%%%%%%%%%%%%%%%%%%%%%%%%%%%%%%%%%
%                    Hardy space for H_m/n
%%%%%%%%%%%%%%%%%%%%%%%%%%%%%%%%%%%%%%%%%%%%%%%%%%
\subsection{The (Rational) Power-Generalized Hartogs Triangles}\label{SS:pgHartogs} 
We now consider the general case of $\Ha_{m/n}$. For $k\in\N_0$, define
	\bes
		\A_k(\Ha_{m/n})=\left\{F\in\hol(\Ha_{m/n})\cap\cont(\Ha_{m/n}\cup\T):
			z_2^kF(z_1,z_2)\ \text{is bounded at}\ (0,0)\right\}.
	\ees
Let $\h^2_k(\Ha_{m/n})$ be the closure of $\A_k(\Ha_{{m/n}})|_{\T}$ in $L^2(\T)$. As in the case $m=n=1$, using $\Theta_{m/n}^*$, we see that 
	\bes
		\h^2_k(\Ha_{m/n})=\left\{\sum_{(j,\ell)\in\Z^2}\hat f_{j,\ell}\,e^{i(j\theta_1+\ell\theta_2)}
		\in L^2(\T):\hat f_{j,\ell}=0,\ \text{if}\ \max\{j,nj+ml+mk\}<0\right\}.
	\ees
Next, we use the map $\Theta_{m/n}$ to compute the Szeg{\H o} kernel for $\h^2_k(\Ha_{m/n})$.
\begin{theorem}\label{T:pgHartogs} Let $m,n\in\N$ with $\gcd(m,n)=1$. Set 
	\bes
		\mathcal{P}_{m,n}(a,b)
	=\sum_{r=0}^{m-1}\left(a\right)^r(b)^{n-\lfloor \frac{nr}{m}\rfloor},\qquad (a,b)\in\CC.
	\ees
Then, for $k\in\N_0$,
	\be\label{eq_SzegoPowGen}
		s_k(z,w)=\frac{1}{4\pi^2}
		{\frac{(z_2\overline w_2)^{-k}\,\mathcal{P}_{m,n}
			\left(z_1\overline w_1, z_2\overline w_2\right)}
			{\left((z_2\overline w_2)^n-(z_1\overline w_1)^m\right)(1-z_2\overline w_2)}},
				\quad z\in\Ha_{m/n},\ w\in\T,
	\ee
is the Szeg{\H o} kernel for 
$\h^2_k(\Ha_{m/n})$. 
%$\cH^2_k(\Ha_{m/n})$.  
\end{theorem}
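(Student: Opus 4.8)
The plan is to run the argument of Theorem~\ref{T:Hartogs} with $\Theta_{m/n}$ replacing $\Theta$: transfer the already-known Szeg\H o kernel of $\D\times\D^*$ from \eqref{E:SzegoDXD*} to $\Ha_{m/n}$ along the proper map $\Theta_{m/n}$. The one genuinely new feature, absent when $m=n=1$, is that $\Theta_{m/n}$ is no longer injective, so the transferred kernel is an average of $s_{mk}^{\D\times\D^*}$ over the fibers of $\Theta_{m/n}$, and it is precisely this fiber-average that manufactures the polynomial $\mathcal P_{m,n}$. One first records the geometry of $\Theta_{m/n}$: it maps $\D\times\D^*$ onto $\Ha_{m/n}$ with every fiber contained in $\D\times\D^*$, and it maps $\T$ onto $\T$; on angular coordinates it is the linear endomorphism $(\theta_1,\theta_2)\mapsto(n\theta_1+n\theta_2,\,m\theta_2)$ of $\T^2$, hence an unbranched $mn$-sheeted covering, and $\Theta_{m/n}^*$ acts on $L^2(\T)$ as the isometry $e^{i(a\theta_1+b\theta_2)}\mapsto e^{i(an\theta_1+(an+bm)\theta_2)}$ onto the closed span of its range. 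The generic fiber over a point of $\Ha_{m/n}$ consists of $mn$ points, obtained by choosing an $m$-th root of $z_2$ and then an $n$-th root of $z_1/z_2^n$; since $\gcd(m,n)=1$ these choices are independent. One also checks that $\Theta_{m/n}^*$ carries $\A_k(\Ha_{m/n})$ into $\A_{mk}(\D\times\D^*)$: the function $F\circ\Theta_{m/n}$ is holomorphic on $\D\times\D^*$ and continuous up to $\T$, and because $\Theta_{m/n}(w_1,w_2)\to(0,0)$ as $w_2\to 0$ uniformly for $w_1\in\overline{\D}$, the function $w_2^{mk}(F\circ\Theta_{m/n})=(z_2^kF)\circ\Theta_{m/n}$ is bounded near $\D\times\{0\}$ and so extends holomorphically across it.

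Now fix $k\in\N_0$, $F\in\A_k(\Ha_{m/n})$, $z\in\Ha_{m/n}$, and any point $\zeta\in\Theta_{m/n}^{-1}(z)\subset\D\times\D^*$. Applying the reproducing property of $s_{mk}^{\D\times\D^*}$ (from \eqref{E:SzegoDXD*} with $k$ replaced by $mk$) to $\Theta_{m/n}^*F$, and using $(\Theta_{m/n}^*F)(\zeta)=F(z)$, gives
\[
F(z)=\int_\T F\big(\Theta_{m/n}(w)\big)\,s_{mk}^{\D\times\D^*}(\zeta,w)\,d\sigma_\T(w).
\]
Since $\Theta_{m/n}\colon\T\to\T$ is an $mn$-sheeted covering, the substitution $w\mapsto\Theta_{m/n}(w)$ converts this into $F(z)=\int_\T F(w)\,s_k(z,w)\,d\sigma_\T(w)$, where
\[
s_k(z,w):=\frac{1}{mn}\sum_{\Theta_{m/n}(w')=w}s_{mk}^{\D\times\D^*}(\zeta,w').
\]
Thus $s_k$ has the reproducing property for $\A_k(\Ha_{m/n})$, hence for $\h^2_k(\Ha_{m/n})$ by density; that it does not depend on the choice of $\zeta$ will be transparent once the sum is computed.

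The heart of the proof is the evaluation of this double sum. With $u:=\zeta_2\overline{w'_2}$, $t:=\zeta_1\overline{w'_1}$, $X:=z_2\overline w_2$ and $Y:=z_1\overline w_1$, one checks that, as $w'$ runs over the fiber of $w$, $u$ runs over all $m$-th roots of $X$ and, for each such $u$, $t$ runs over all $n$-th roots of $Y/u^n$; moreover \eqref{E:SzegoDXD*} gives $s_{mk}^{\D\times\D^*}(\zeta,w')=\tfrac{1}{4\pi^2}u^{-mk}(1-u)^{-1}(1-t)^{-1}$ with $u^{mk}=X^k$. Evaluating the inner sum over $n$-th roots by the elementary identity $\sum_{t^n=c}(1-t)^{-1}=n(1-c)^{-1}$ leaves
\[
s_k(z,w)=\frac{1}{4\pi^2\,m\,X^{k}}\sum_{u^{m}=X}\frac{u^{n}}{(1-u)(u^{n}-Y)}.
\]
The remaining sum over the $m$-th roots of $X$ is then computed by the discrete-residue formula $\tfrac1m\sum_{u^m=X}g(u)=\tfrac1{2\pi i}\oint u^{m-1}g(u)(u^m-X)^{-1}\,du$, deforming the contour to infinity so that the sum equals minus the residues at the remaining poles of the integrand, namely the simple pole at $u=1$ and the simple poles at the $n$-th roots of $Y$. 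Collecting those residues — where the hypothesis $\gcd(m,n)=1$ is exactly what forces the $Y$- and $X$-exponents that appear to be $r$ and $n-\lfloor nr/m\rfloor$ for $0\le r\le m-1$ — collapses the sum to $X^{-k}\mathcal P_{m,n}(Y,X)\big[(X^{n}-Y^{m})(1-X)\big]^{-1}$. Substituting $X=z_2\overline w_2$, $Y=z_1\overline w_1$ gives \eqref{eq_SzegoPowGen}; the case $m=n=1$ recovers \eqref{eq_szegoHa} since $\mathcal P_{1,1}(a,b)=b$.

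Finally, to upgrade $s_k$ from a reproducing kernel to \emph{the} Szeg\H o kernel, I would verify that $\overline{s_k(z,\cdot)}\in\h^2_k(\Ha_{m/n})$ for every $z\in\Ha_{m/n}$, which by Proposition~\ref{P:Riesz-bd} forces $s_k$ to be the Szeg\H o kernel. Concretely, for $z\in\Ha_{m/n}$ one expands the right-hand side of \eqref{eq_SzegoPowGen} on $\T$ as a product of geometric series in $z_1\overline w_1$ and $z_2\overline w_2$ (which converge because $|z_1|^m<|z_2|^n<1$), reads off the Fourier spectrum in $w$, and checks that it is contained in the index set describing $\h^2_k(\Ha_{m/n})$; the floor function in $\mathcal P_{m,n}$ is exactly what makes this containment hold. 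The step I expect to be the main obstacle is the evaluation of the sum over the $m$-th roots of $X$, and in particular the combinatorial bookkeeping that produces the exponent $n-\lfloor nr/m\rfloor$; everything else is routine once the framework of Section~\ref{S:varietydeleteddomains} and the explicit kernel \eqref{E:SzegoDXD*} are in place.
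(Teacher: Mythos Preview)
Your overall strategy --- transfer the known Szeg{\H o} kernel of $\D\times\D^*$ to $\Ha_{m/n}$ along the proper map $\Theta_{m/n}$ and then compute a fiber-average --- is correct and is precisely what the paper does. Two minor organizational differences: the paper first reduces to $k=0$ via Proposition~\ref{P:kernel} (since $|z_2|\equiv 1$ on $\T$), and it averages over the fiber of the \emph{interior} point $z$ rather than of the boundary point $w$; after the changes of variable these two choices lead to the same root-sum. For the membership $\overline{s_k(z,\cdot)}\in\h^2_k(\Ha_{m/n})$, the paper simply observes that $\overline{s_k(z,\cdot)}\in\A_k(\Ha_{m/n})$ directly from the formula, which is shorter than a Fourier-expansion check.

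There is, however, a genuine gap in your computation of
\[
\frac1m\sum_{u^m=X}\frac{u^n}{(1-u)(u^n-Y)}.
\]
The integrand $u^{m+n-1}\big[(u^m-X)(1-u)(u^n-Y)\big]^{-1}$ indeed has no residue at infinity, so the residue theorem gives
\[
\text{(sum over $u^m=X$)}+\operatorname{Res}_{u=1}+\text{(sum over $u^n=Y$)}=0,
\]
but this merely trades the sum over $m$-th roots of $X$ for a sum over $n$-th roots of $Y$: the residue at $u=\eta$ with $\eta^n=Y$ is $\eta^m\big[n(\eta^m-X)(1-\eta)\big]^{-1}$, and summing these over the $n$ values of $\eta$ is a computation of exactly the same shape as the one you started with. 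No finite collection of residues ``collapses'' to $\mathcal P_{m,n}$ without a further identity. The paper closes the circle with an explicit partial-fractions lemma (Lemma~\ref{lem_partfrac2}), which evaluates $\sum_{\ell=1}^m b_\ell^n\big[(x-b_\ell^n)(y-b_\ell)\big]^{-1}$ for $b_\ell$ the $m$-th roots of $b$; it is this lemma, not contour deformation, that produces the coefficients $c_{p,q}$ and thereby the exponent $n-\lfloor nr/m\rfloor$. Your route can be salvaged --- for instance by expanding the residues at the $n$-th roots of $Y$ as geometric series in $\eta^m/X$ and in $\eta$ (both of modulus $<1$ for $z\in\Ha_{m/n}$, $w\in\T$) and then resumming --- but the combinatorial regrouping one must perform is exactly the content of Lemma~\ref{lem_partfrac2}.
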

%%%%%%%%%%%%%%%%%%%%%%%%%%%
In order to prove Theorem~\ref{T:pgHartogs}, we need the following two lemmas. The proofs are straightforward applications of integration by substitution and partial fraction decompositions, so they are omitted. 

%------------------------------------
%--------------LEMMA (ROOTS)---------
%------------------------------------
\begin{lemma}\label{lem_roots} Suppose $f\in\cont(b\D)$. Then 
		\be\label{eq_powersubs}
		 \int_{|\zeta|=1} \zeta^{n-1}f(\zeta^n)\,d\zeta 
			=\int_{|z|=1}f(z)\,dz.
		\ee
More generally, if $n\in\N$, $a\in\C\setminus S^1$, and $a_1,...,a_n$ denote the $n^{\text{th}}$-roots of $a$ (counting multiplicity). Then  
	\bea\label{eq_partfrac}
	\sum_{j=1}^n\left(\int_{|\zt|=1}\frac{f(\zeta^n)}{\zeta-a_j}\,d\zt\right)=
			n\int_{|w|=1}\dfrac{f(w)}{w-a}\,dw.
	\eea
\end{lemma}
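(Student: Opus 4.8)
The plan is to prove the two identities in the order stated, deriving \eqref{eq_partfrac} from \eqref{eq_powersubs}.

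First I would establish the substitution formula \eqref{eq_powersubs}. The cleanest route is to parametrize $\zeta=e^{i\theta}$ with $\theta\in[0,2\pi)$, note that $\zeta^{n-1}\,d\zeta=i e^{in\theta}\,d\theta$, so the left-hand side equals $i\int_0^{2\pi} f(e^{in\theta})\,e^{in\theta}\,d\theta$, and then substitute $\phi=n\theta$. This produces $\tfrac{i}{n}\int_0^{2\pi n} f(e^{i\phi})\,e^{i\phi}\,d\phi$, which by $2\pi$-periodicity of the integrand collapses to $i\int_0^{2\pi} f(e^{i\phi})\,e^{i\phi}\,d\phi=\int_{|z|=1} f(z)\,dz$. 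The conceptual content is simply that $\zeta\mapsto\zeta^n$ is a degree-$n$ self-covering of $S^1$, so the pullback $f(\zeta^n)\,d(\zeta^n)=n\,f(\zeta^n)\,\zeta^{n-1}\,d\zeta$ integrates over $S^1$ to $n$ times $\int_{S^1} f(z)\,dz$; only continuity of $f$ on $b\D$ is used.

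Next, for \eqref{eq_partfrac}, I would invoke the factorization $\zeta^n-a=\prod_{j=1}^n(\zeta-a_j)$ (valid for every $a\in\C$ once the roots are taken with multiplicity) and take logarithmic derivatives to obtain the pointwise identity $\sum_{j=1}^n\frac{1}{\zeta-a_j}=\frac{n\zeta^{n-1}}{\zeta^n-a}$, which holds on all of $S^1$ because $a\notin S^1$ keeps the denominators from vanishing there. Multiplying by $f(\zeta^n)$ and integrating over $|\zeta|=1$ turns the left-hand side of \eqref{eq_partfrac} into $n\int_{|\zeta|=1}\zeta^{n-1}\cdot\frac{f(\zeta^n)}{\zeta^n-a}\,d\zeta$. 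Finally I would apply \eqref{eq_powersubs} with $f$ replaced by $g(w):=\frac{f(w)}{w-a}$ — which lies in $\cont(b\D)$ exactly because $a$ is off the unit circle — and conclude that this equals $n\int_{|w|=1}\frac{f(w)}{w-a}\,dw$, which is \eqref{eq_partfrac}.

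I do not anticipate a genuine obstacle: both identities reduce to a covering-space change of variables together with a partial-fraction bookkeeping. The one point I would be careful about is the degenerate case $a=0$, where the $n$-th roots ``counted with multiplicity'' all collapse to the origin; but the factorization, and hence the logarithmic-derivative identity $\sum_j(\zeta-a_j)^{-1}=n\zeta^{n-1}/(\zeta^n-a)$, survive verbatim, so no separate argument is required. The lemma is ultimately a convenient packaging of these two facts for use in the proof of Theorem~\ref{T:pgHartogs}.
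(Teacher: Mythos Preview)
Your proof is correct and matches exactly the approach the paper indicates: the authors omit the proof, stating only that it is a ``straightforward application of integration by substitution and partial fraction decompositions,'' which is precisely your substitution $\phi=n\theta$ for \eqref{eq_powersubs} and the logarithmic-derivative identity $\sum_j(\zeta-a_j)^{-1}=n\zeta^{n-1}/(\zeta^n-a)$ for \eqref{eq_partfrac}.
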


%------------------------------------
%--------------LEMMA (PARTIAL FRACT)-
%------------------------------------
\begin{lemma}\label{lem_partfrac2} Let $b\in\C\setminus\{0\}$ and $b_1,...,b_m$ denote the $m^{\text{th}}$-roots of $b$ (counting multiplicty). Then
	\bes
		\sum_{\ell=1}^m\frac{b_\ell^n}{(x-b_\ell^n)(y-b_\ell)}=
		\frac{m b^{n+1}\sum\limits_{p,q=0}^{m-1} c_{p,q}\,x^{p} y^{q}}{(x^m-b^n)(y^m-b)},
	\ees 
where
	\be\label{eq_coeff}
c_{p,q}=\begin{cases}
			0,& \text{if\quad}\ np+1+q\nequiv 0 \text{ (mod m)},\\
			b^{-\frac{np+1+q}{m}},& \text{if\quad}\ np+1+q\equiv 0 \text{ (mod m)}.
			\end{cases}
	\ee
\end{lemma}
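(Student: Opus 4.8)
The plan is to reduce the claimed identity to a polynomial identity in the numerators and then evaluate the resulting coefficients via a root-of-unity sum. Since both sides are rational functions, I would first multiply through by $(x^m-b^n)(y^m-b)$; it then suffices to prove the polynomial identity
\bes
\sum_{\ell=1}^m b_\ell^n\,\frac{x^m-b^n}{x-b_\ell^n}\,\frac{y^m-b}{y-b_\ell}
= m\,b^{n+1}\sum_{p,q=0}^{m-1}c_{p,q}\,x^p y^q.
\ees
Each summand on the left is a genuine polynomial because $b_\ell^m=b$ forces $(b_\ell^n)^m=b^n$, so that $x-b_\ell^n$ divides $x^m-b^n$ and $y-b_\ell$ divides $y^m-b$.

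The key step is the finite geometric sum $\frac{X^m-a^m}{X-a}=\sum_{j=0}^{m-1}X^j a^{m-1-j}$, applied twice for each $\ell$: with $(X,a)=(y,b_\ell)$ it gives $\frac{y^m-b}{y-b_\ell}=\sum_{q=0}^{m-1}y^q b_\ell^{m-1-q}$, and with $(X,a)=(x,b_\ell^n)$ it gives $\frac{x^m-b^n}{x-b_\ell^n}=\sum_{p=0}^{m-1}x^p b_\ell^{n(m-1-p)}$. I emphasize that these hold \emph{term by term} in $\ell$, so the argument never needs the factorization $\prod_\ell(x-b_\ell^n)=x^m-b^n$; this is precisely what lets the computation run without any hypothesis relating $m$ and $n$. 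Substituting both expansions and collecting the coefficient of $x^p y^q$, the left-hand side becomes $\sum_{p,q=0}^{m-1}x^p y^q\sum_{\ell=1}^m b_\ell^{E(p,q)}$, where a short exponent count gives $E(p,q)=n+n(m-1-p)+(m-1-q)=m(n+1)-(np+1+q)$.

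It then remains to evaluate $\sum_{\ell=1}^m b_\ell^{E}$. Writing $b_\ell=b^{1/m}\zt^\ell$ with $\zt=e^{2\pi i/m}$ for a fixed $m$-th root $b^{1/m}$, one has $\sum_{\ell=1}^m b_\ell^E=b^{E/m}\sum_{\ell=0}^{m-1}\zt^{\ell E}$, and the discrete orthogonality relation $\sum_{\ell=0}^{m-1}\zt^{\ell E}=m$ when $m\mid E$ and $0$ otherwise finishes the computation. Since $E\equiv-(np+1+q)\pmod m$, the coefficient vanishes unless $np+1+q\equiv0\pmod m$, matching the first case of \eqref{eq_coeff}; and when $m\mid(np+1+q)$ the exponent $E/m=(n+1)-(np+1+q)/m$ is an integer, so $b^{E/m}=b^{n+1}b^{-(np+1+q)/m}=b^{n+1}c_{p,q}$ is independent of the choice of $b^{1/m}$. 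This reproduces the right-hand side exactly.

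The only real bookkeeping hazard is the exponent arithmetic in $E(p,q)$ together with the observation that $b^{E/m}$ is well defined precisely on the surviving indices (where $m\mid E$); away from those indices the root-of-unity sum already annihilates the term, so no branch of $b^{1/m}$ ever needs to be fixed. I expect no genuine obstacle beyond this careful accounting.
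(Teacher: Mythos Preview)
Your argument is correct: clearing denominators, expanding each factor via the finite geometric sum, and then collapsing the resulting $\sum_\ell b_\ell^{E(p,q)}$ with the root-of-unity orthogonality relation is exactly the right computation, and your exponent bookkeeping $E(p,q)=m(n+1)-(np+1+q)$ checks out. The paper itself omits the proof, labeling it a ``straightforward application of \ldots\ partial fraction decompositions,'' which is precisely what your geometric-series expansion amounts to; so there is nothing to compare against beyond confirming that your write-up realizes the paper's one-line hint.
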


%------------------------------------
%--------PROOF OF THM RF Rational----
%------------------------------------
\noindent {\em Proof of Theorem~\ref{T:pgHartogs}.} For any $k\in\N_0$,  $s_k$, as defined in \eqref{eq_SzegoPowGen}, satisfies $\overline{s_k(z,\cdot)}\in\A_k(\Ha_{m/n})$ for all $z\in\Ha_{m/n}$. Thus, 
it suffices to show that $s_k$ has the reproducing property for $\h^2_k(\Ha_{m/n})$. Since $|z_2|\big|_\T\equiv 1$, by Proposition~\ref{P:kernel}, we only need to prove this for $k=0$.  

Recall that $\Theta_{m/n}(\zt_1,\zeta_2)=(\zt_1^n\zeta_2^n,\zeta_2^m)$ maps $\D\times\D^*$ onto $\Ha_{m/n}$. Given $F\in \A_0(\Ha_{m/n})$ and $z=(z_1,z_2)\in\Ha_{m/n}$, let $z_{11},...,z_{1n}$ and $z_{21},...,z_{2m}$ denote the $n^{\text{th}}$-roots and $m^{\text{th}}$-roots of $z_1$ and $z_2$, respectively,  so that $F(z_1,z_2) = F(z_{1j}^n\,,z_{2\ell}^m)$ for any $1\leq j\leq n$ and $1\leq \ell\leq m$. Thus, 
		\bes
		F(z_1,z_2)=\frac{1}{mn}
		\sum_{\ell=1}^m\sum_{j=1}^n(F\circ\Theta_{m/n})\left(\frac{z_{1j}}{z_{2\ell}},z_{2\ell}\right).
		\ees 
We apply the Cauchy integral formula for $\D^2$ to $(F\circ\Theta_{m/n})\in \A_0(\D\times\D^*)=\A(\D^2)$, and obtain the following sequence of arguments.   
	\beas
		(2\pi i)^2mn F(z_1,z_2)&=&\sum_{\ell=1}^m\sum_{j=1}^n\iint_\T\frac{(F\circ \Theta_{m/n})(\zeta_1,\zeta_2)}{\left(\zeta_1-\frac{z_{1j}}{z_{2\ell}}\right)(\zt_2-z_{2\ell})}
			\,d\zt_1\,d\zt_2\\
		&=&\sum_{\ell=1}^m\int_{|\zeta_2|=1}\left(\sum_{j=1}^n\int_{|\zeta_1|=1}
		\frac{F(\zeta_1^n\zeta_2^n,\zeta_2^m)}{\zeta_1-\frac{z_{1j}}{z_{2\ell}}}
			\,d\zeta_1\right)\frac{d\zeta_2}{(\zeta_2-z_{2\ell})}\\	
		&\overset{\zt_1\zeta_2\mapsto \xi}{=}&
		\sum_{\ell=1}^m\int_{|\zeta_2|=1}\left(\sum_{j=1}^n\int_{|\xi|=1}
		\frac{F(\xi^n,\zeta_2^m)}{\left(\xi-\zeta_2\frac{z_{1j}}{z_{2\ell}}\right)}
			\,d\xi\right) \frac{d\zeta_2}{(\zeta_2-z_{2\ell})}\\	
		&\overset{\eqref{eq_partfrac}}{=}&
		n\sum_{\ell=1}^m\int_{|\zeta_2|=1}\left(\int_{|w_1|=1}
		\frac{F(w_1,\zeta_2^m)}{\left(w_1-\zeta_2^n\frac{ z_1}{z_{2\ell}^n}\right)}\,dw_1\right)
\frac{d\zeta_2}{(\zeta_2-z_{2\ell})}\\
		&=&		n\iint_{\T}\frac{F(w_1,\zeta_2^m)}{-w_1}\left(\sum_{\ell=1}^m\frac{z_{2\ell}^n}
{(\zeta_2^n\frac{ z_1}{w_1}-z_{2\ell}^n)(\zeta_2-z_{2\ell})}\right)\,dw_1\,d\zeta_2.
	\eeas
Now, by Lemma~\ref{lem_partfrac2} (with $x=\frac{\zeta_2^n z_1}{w_1}$, $y=\zeta_2$ and $b=z_2$), we have that
	\beas
	 	(2\pi i)^2mnF(z_1,z_2)
		&=&
		mn\iint_{\T}
	\frac{F(w_1,\zeta_2^m)}{-w_1}\left(\frac{z_2^{n+1}\sum\limits_{p,q=0}^{m-1}
	c_{p,q} \left(\frac{\zeta_2^n z_1}{w_1}\right)^{p}\zeta_2^{q}}{{\zeta_2^{mn}(\frac{ z_1^m}{w_1^m}-z_2^n)(\zeta_2^m-z_2)}}\right)\,dw_1\,d\zeta_2\\
				&=&
		mn\iint_{\T}
		F(w_1,\zeta_2^m)\left(\frac{\zeta_2^{m-1}
			\sum\limits_{p,q=0}^{m-1}\left( z_2^{\frac{np+1+q}{m}} c_{p,q}\right) 
		(z_1\overline w_1)^{p}(z_2\overline\zeta_2^m)^{(n+1)-\frac{np+q+1}{m}}}
		{{\left((z_2\overline\zeta_2^{m})^n-z_1^m\overline w_1^m\right)(\zeta_2^m-z_2)}}\right)\,\frac{dw_1}{w_1}\,d\zeta_2,
	\eeas
where $c_{p,q}$ are as in \eqref{eq_coeff} (with $b=z_2$). Applying \eqref{eq_powersubs} in the $\zeta_2$ variable, we get
	\beas
	 	(2\pi i)^2 F(z_1,z_2)
		&=&\iint_{\T}
		F(w_1,w_2)\frac{
			\sum\limits_{p,q=0}^{m-1}\wt c_{p,q}\,
		(z_1\overline w_1)^{p}\,(z_2\overline w_2)^{n+1-\frac{np+1+q}{m}}}
		{{(z_2^n\overline w_2^n-z_1^m\overline w_1^m)(1-z_2\overline w_2)}}\,
		\frac{dw_1}{w_1}\,\frac{dw_2}{w_2},
	\eeas
where
	\bes
	\wt c_{p,q}={z_2}^{\frac{np+1+q}{m}}c_{p,q}=\begin{cases}
			0,& \text{if}\ np+1+q\nequiv 0 \text{ (mod m)},\\
			1,& \text{if}\ np+1+q\equiv 0 \text{ (mod m)}.
			\end{cases}
	\ees
This settles our claim, once we observe that 
	\bes
		\sum_{p,q=0}^{m-1}\wt c_{p,q} 
		\left(a\right)^{p}\left(b\right)^{n+1-\frac{np+1+q}{m}}
			=\sum_{r=0}^{m-1}\left(a\right)^r(b)^{n-\lfloor \frac{nr}{m}\rfloor}
				=\mathcal{P}_{m,n}(a,b).
	\ees
\qed

In view of our minimum criterion for a Hardy space, we end this subsection with an exhaustion-based definition of Hardy spaces for $\Ha_{m/n}$. For $k\in\N_0$, let
\bes
	\cH^2_k(\Ha_{m/n})= \left\{F\in\hol(\Ha_{m/n}):||F||_{\cH^2_k(\Ha_{m/n})}<\infty\right\},
\ees
where
\bes
	||F||_{\cH^2_k(\Ha_{m/n})}:=		\sup\limits_{0<s,r<1}\left(\frac{r^{2k}}{4\pi^2}\int\limits_{0}^{2\pi}\int\limits_{0}^{2\pi}
			\left|F(r^{\frac{n}{m}}s^{\frac{1}{m}}e^{i\theta_1},re^{i\theta_2})\right|^2
				d\theta_1d\theta_2\right)^{\frac{1}{2}}.
	\ees
%The reader may verify, via a power series approach, that the map $F\in\A_k(\Ha_{m/n})\mapsto F|_\T\in\A_k(\Ha_{m/n})|_\T$ extends to an isometric isomorphism, up to a multiplicative constant, between $(\cH^2_k(\Ha_{m/n}),||.||_{\cH^2_k(\Ha_{m/n})})$ and $(\h^2_k(\Ha_{m/n}),||.||_{L^2(\T)})$. 

\subsection{$\mathbf {L^p}$-regularity of the Szeg{\H o} projection}\label{SS:Lpreg} We briefly remark on the $L^p$-mapping properties of the projection operator induced by the Szeg{\H o} kernel $s_{k} $ for $\h^2_k(\Ha_{m/n})$, $k\in\N_0$, $m,n\in\N$, $\gcd(m,n)=1$. In \cite{Mo19}, Monguzzi shows that when $k=m=n=1$, the densely defined operator 
	\bea\label{E:projection}
		\mathbb S_{k}:L^2(\T)\cap L^p(\T)&\rightarrow&\h^2_k(\Ha_{m/n}) \\
		f&\mapsto& \mathbb S_{k}f:=\left(z\mapsto\int_{\T}f(w)\cdot s_{k}(z,w)\: d\sigma_\T(w)\right)\Big|_\T\notag
	\eea
 extends to a bounded operator from $L^p(\T)$ to $L^p(\T)$ for any $p\in(1,\infty)$. This is done by realizing $\mathbb S_{1}$ as a Fourier multiplier operator on the $2$-dimensional torus $\T$. As a generalization of this fact, we note that 
$\mathbb S_{k}$ is the Fourier multiplier operator 
	\bes
		f(e^{i\theta_1},e^{i\theta_2})=\sum_{(j,\ell)\in\Z^2}\hat f_{j,\ell}\,e^{ij\theta_1}e^{i\ell\theta_2}
		\mapsto 		
	\sum_{(j,\ell)\in\Z^2}c(j,\ell)\hat f_{j,\ell}\,e^{ij\theta_1}e^{i\ell\theta_2},
		\quad  f\in L^2(\T),
	\ees
where, using the convention that $\operatorname{sgn}(0)=0$,
	\bes
		c(j,\ell)=\frac{1+\operatorname{sgn}(j+1)}{2}\cdot 
		\frac{1+\operatorname{sgn}(nj+ml+mk+1)}{2},\quad (j,\ell)\in\Z^2.
	\ees
Then, by the same argument, \eqref{E:projection} extends to a bounded operator from $L^p(\T)$ to $L^p(\T)$, $1<p<\infty$. 

The Szeg{\H o} projections considered above do not exhibit the irregularity properties of the Bergman projection, see \cite{ChZe16, EdMc16}, because the underlying Hardy spaces are supported only on the distinguished boundary of the domain. It is possible that if one considers Hardy spaces supported on the full boundary of the domain, then a stronger connection with the Bergman projection will emerge.

%----------------------------
\bibliography{referencesFHS}{}
\bibliographystyle{plain}
%----------------------------

\end{document}